\theoremstyle{plain}
\newtheorem{lemma}{Lemma}[section]
\newtheorem{proposition}[lemma]{Proposition}
\newtheorem{theorem}[lemma]{Theorem}
\newtheorem{assumption}[lemma]{Assumption}
\theoremstyle{definition}
\newtheorem{definition}[lemma]{Definition}
\newtheorem{example}[lemma]{Example}
\newtheorem{remark}[lemma]{Remark}
\newlist{todolist}{itemize}{2}
\setlist[todolist]{label=$\square$}
\numberwithin{equation}{section}
\begin{document}

\title{Stochastic optimal control in Hilbert spaces: $C^{1,1}$ regularity of the value function and optimal synthesis via viscosity solutions
}
\newcommand\shorttitle{Stochastic optimal control in Hilbert spaces: $C^{1,1}$ regularity of the value function and optimal synthesis}

\date{\hfill}

\author{Filippo de Feo\footnote{Dipartimento di Matematica, Politecnico di Milano, Milan, Italy, Department of Economics and Finance,  Luiss Guido Carli University, Rome, Italy, and Institut für Mathematik, Technische Universität Berlin, Berlin, Germany; Email: defeo@math.tu-berlin.de}}
\author{Andrzej {\'{S}}wi{\k{e}}ch\footnote{School of Mathematics, Georgia Institute of Technology, Atlanta, GA 30332, USA; Email: swiech@math.gatech.edu}}
\author{Lukas Wessels\footnote{School of Mathematics, Georgia Institute of Technology, Atlanta, GA 30332, USA; Email: wessels@gatech.edu}}
\newcommand\authors{Filippo de Feo, Andrzej {\'{S}}wi{\k{e}}ch, and Lukas Wessels}

\affil{}

\maketitle

\unmarkedfntext{\textit{Mathematics Subject Classification (2020) ---} 93E20, 49L25, 49L12, 49K45, 60H15, 49L20, 49N35, 35R15, 35K57, 34K50}


\unmarkedfntext{\textit{Keywords and phrases ---} optimal synthesis, viscosity solution, Hamilton--Jacobi--Bellman equation, dynamic programming, verification theorem, stochastic optimal control, infinite dimension, stochastic reaction-diffusion equations, stochastic delay equations}


\begin{abstract}
We study optimal control problems governed by abstract infinite dimensional stochastic differential equations using the dynamic programming approach. In the first part, we prove Lipschitz continuity, semiconcavity and semiconvexity of the value function under several sets of assumptions, and thus derive its $C^{1,1}$ regularity in the space variable. Based on this regularity result, we construct optimal feedback controls using the notion of $B$-continuous viscosity solutions for the associated Hamilton--Jacobi--Bellman equation. This is done in the case when the noise coefficient is independent of the control variable. We also discuss  applications of our results to optimal control problems governed by stochastic reaction-diffusion equations and, under economic motivations, stochastic delay differential equations.

\end{abstract}

\section{Introduction}
\noindent
\subsection{Optimal Control Problem}
In this paper, we show how to use the theory of viscosity solutions in Hilbert spaces to construct optimal feedback controls for control problems governed by abstract infinite dimensional stochastic differential equations (SDEs). Such evolution equations include, in particular, controlled semilinear stochastic partial differential equations (SPDEs) such as stochastic reaction-diffusion equations and stochastic wave equations as well as stochastic delay differential equations. We refer the reader to \cite[Section 2.6]{fabbri2017} for a comprehensive overview of such applications.

The general problem we study is the following.
Let $T <\infty$ be a fixed terminal time, let $H$ be a real, separable Hilbert space, let $\Lambda$ be a real, separable Banach space and let 
$\Lambda_0$ be a convex subset of $\Lambda$. For an initial time $0\leq t < T$, we consider the  optimal control problem of minimizing the cost functional
\begin{equation}\label{costfunctional}
	J(t,x;a(\cdot)) := \mathbb{E} \left [ \int_t^T l(X(s),a(s)) \mathrm{d}s + g(X(T)) \right ]
\end{equation}
over a class of appropriately defined admissible controls $a(\cdot):[t,T]\times\Omega \to \Lambda_0$, subject to the abstract SDE
\begin{equation}\label{state}
	\begin{cases}
		\mathrm{d}X(s) = [ A X(s) + b(X(s),a(s)) ] \mathrm{d}s + \sigma(X(s),a(s)) \mathrm{d}W(s),\quad s\in [t,T]\\
		X(t) = x\in H.
	\end{cases}
\end{equation}
Here, $l:H\times\Lambda_0\to\mathbb{R}, \Lambda_0\subset\Lambda$ and $g:H\to\mathbb{R}$ denote the running and terminal cost, respectively, $A:\mathcal{D}(A) \subset H \to H$ is a linear unbounded operator, $b:H\times \Lambda_0 \to H$ and $\sigma:H\times \Lambda_0 \to L_2(\Xi,H)$ denote the drift and the noise coefficient functions, respectively, and the SDE is driven by a cylindrical Wiener process $(W(s))_{s\in [t,T]}$ taking values in some real, separable Hilbert space $\Xi$ and defined on some filtered probability space $(\Omega, \mathscr{F},(\mathscr{F}_s^t)_{s\in[t,T]},\mathbb{P})$, where $(\mathscr{F}_s^t)_{s\in[t,T]}$ is the natural filtration generated by $(W(s))_{s\in [t,T]}$. The notation and the precise definitions will be explained in Section \ref{sec:defnotass}. Our goal in this paper is the construction of an optimal feedback control.

\subsection{Dynamic Programming Approach}

One of the major approaches to control problems of this kind is the dynamic programming approach introduced by Richard Bellman in the 1950s, see \cite{bellman1957}. In this approach, the central object of study is the so-called value function defined as
\begin{equation*}
	V(t,x) := \inf_{a(\cdot)} J(t,x;a(\cdot)),
\end{equation*}
which, provided it is sufficiently regular, can be used to derive sufficient optimality conditions (also known as verification theorems) as well as to construct optimal feedback controls (known as optimal synthesis). Obtaining the value function in the first place is a challenging task. To approach this problem, Bellman formally derived the, in general fully nonlinear, so-called Hamilton--Jacobi--Bellman (HJB) equation which should be satisfied by the value function. In our case the associated HJB equation has the form \cite{fabbri2017}
\begin{equation}\label{HJB}
\begin{cases}
	v_t + \langle Ax,Dv\rangle_H + \inf_{a\in \Lambda_0} \mathcal{F}(x,Dv,D^2v,a) =0,\quad (t,x)\in (0,T)\times H\\
	v(T,\cdot) = g,
\end{cases}
\end{equation}
where the Hamiltonian function $\mathcal{F}:H\times H \times S(H)\times \Lambda_0\to \mathbb{R}$ is given by
\begin{equation*}
	\mathcal{F}(x,p,P,a) := \frac12 \text{Tr} \left [ \sigma(x,a) \sigma^{\ast}(x,a) P \right ] + \langle b(x,a),p \rangle_H + l(x,a).
\end{equation*}
In a second step, assuming sufficient regularity, the solution of the HJB equation can also be used to derive verification theorems and perform optimal synthesis. For this reason, the construction of optimal feedback controls is intrinsically related to the study of the regularity of the solution of the HJB equation \eqref{HJB}. In many interesting cases, the value function is not sufficiently regular to satisfy the HJB equation in a classical sense. Therefore, various notions of solution have been applied to the study HJB equations.

\subsection{Results in Finite Dimension}
In finite dimensional spaces, existence, uniqueness and regularity of classical and strong solutions of parabolic HJB equations have been well studied (see, e.g., \cite{Dong, Lieber, Krylov1, Wang1, Wang2}) and classical verification theorems and existence of optimal feedback controls can be found for instance in \cite{FlRie, yong1999} (see also \cite{Krylov} for related results). When classical solutions do not exist, e.g., in the case of fully nonlinear and degenerate equations, the notion of viscosity solution has generally been used. We refer the reader for instance to \cite{bardi_1997,CIL, FlSon} for excellent overviews of the subject. In the absence of uniform parabolicity, regularity results tend to be short-time and rely on techniques which are typically based on comparison theorems or explicit representation formulas to show Lipschitz continuity, and semiconcavity and semiconvexity of viscosity solutions in the spatial variable. We refer the readers to \cite{CanSin, FlSon, Ishii, IshiiLions, Lions-book, yong1999} for various results in this direction. $C^{1,1}$ regularity results using a weaker notion of uniform ellipticity/parabolicity can be found in \cite{LionsIII} (see also \cite{Krylov} for earlier one- and two-sided second derivative estimates without the use of viscosity solutions). Applications of viscosity solutions to stochastic optimal control problems are discussed in classical monographs \cite{FlSon, yong1999}. A stochastic verification theorem and some results about optimal feedbacks using viscosity solutions can be found in \cite{yong1999,gozzi_swiech_zhou_2005,gozzi_swiech_zhou_2010}. Since the notion of viscosity solution does not require any regularity of the solution, these results rely on set-valued sub/superdifferentials (jets) of the value function.

\subsection{Results in Infinite Dimension}
In infinite dimensional Hilbert spaces, the problem is even more complicated since we have fewer techniques available and all HJB equations of type \eqref{HJB} are in some sense degenerate. Classical solutions are hard to come by and several different notions of generalized solutions, like viscosity solutions, mild solutions, mild solutions in $L^2$ spaces, solutions using backward SDEs, etc., have been introduced to resolve this issue. Early results can be found in \cite{barbu_daprato_1983}, see also \cite{DZ02} for the study of linear second order equations in Hilbert spaces. The recent monograph \cite{fabbri2017} contains a comprehensive overview of all the previously mentioned approaches. Apart from viscosity solutions, all these notions of generalized solutions assume some regularity of the solutions and their theories are mostly restricted to semilinear equations with a degree of non-degeneracy guaranteeing some smoothing properties of the transition semigroups generated by the linear parts of the equations, which in addition may also satisfy some structure conditions. Various regularity results using these notions of solutions are discussed in \cite[Chapters 4--6]{fabbri2017}. These regularity results can then be used to derive verification theorems and construct optimal feedback controls, see \cite[Sections 4.8, 5.5, 6.5 and the bibliographical notes of these sections]{fabbri2017}. In most cases, an approximation procedure by more regular solutions is used to circumvent the issue of lacking sufficient regularity. Another direction is to obtain weaker versions of It\^o's formula. This was done for instance in \cite{federico_gozzi_2018} and we refer to this paper for a discussion of other such results.

While the notion of viscosity solution adapts with some modifications to fully nonlinear degenerate equations in Hilbert spaces without unbounded operators, the presence of unbounded operators introduces serious difficulties. Firstly, there are various notions of viscosity solutions with subtle differences for such so-called ``unbounded'' equations. In this paper we use the approach of $B$-continuous viscosity solutions introduced in \cite{CLIV,CLV,SW94}; see  \cite[Chapter 3]{fabbri2017} for a comprehensive presentation. Secondly, since the lack of any a priori regularity persists from the finite dimensional case, verification theorems and optimal synthesis become even more complicated. For deterministic problems we refer for instance to \cite[Chapter 6]{LY}, \cite{can-fr, fgs_2008} for some results. We also mention the paper \cite{GomNur} which has results about the existence of minimizers, the value function and the associated HJB equation for deterministic calculus of variation problems in a Hilbert space. In the stochastic case, versions of the finite dimensional result from \cite{yong1999, gozzi_swiech_zhou_2005,gozzi_swiech_zhou_2010} appeared recently in \cite{chen2022,stannat_wessels_2021, wessels2023}, again using set-valued differentials of the value function. There are very few regularity results for viscosity solutions of HJB equations in Hilbert spaces. When the equation has only bounded and continuous terms, $C^{1,1}$ regularity in $x$ was proved for some classes of first order equations in \cite{bensoussan_2019, gangbo_meszaros_2020}. For special equations, regularity can also be derived from explicit representation formulas, see \cite{lasry1986}. For bounded second order equations, semiconcavity estimates were derived in \cite{lions-infdim1}. Moreover, when the HJB equation has some non-degeneracy on a closed subspace of $H$, partial $C^{1,1}$ regularity was derived using semiconcavity and classical finite dimensional non-degeneracy arguments. $C^{1,1}$ regularity in $x$ based on showing semiconcavity and semiconvexity of the value function for a class of semilinear degenerate parabolic second order HJB equations in a Hilbert space  related to a specific mean field control problems with common additive noise was obtained in \cite{mayorga_swiech_2022}. 
$C^{1,1}$ regularity of the value function for a control problem coming from mean field games was also proved in \cite{bensoussan_2020}. The equations considered in \cite{bensoussan_2020,mayorga_swiech_2022} do not have unbounded operators. In \cite{SwTe}, $W^{2,\infty}_Q$ regularity was proved for viscosity solutions of fully nonlinear obstacle problems with the so-called $Q$-elliptic operator, with the nuclear self-adjoint operator $Q>0$ governing the degenerate ellipticity of the PDE.

Regarding viscosity solutions of ``unbounded" HJB equations of type \eqref{HJB}, there are almost no results besides Lipschitz continuity. Semiconcavity estimates for the value function for deterministic control problems related to first order HJB equations are in \cite{LY}. Other available differentiability results were proven for problems with delays (rewritten in the Hilbert space $H=\mathbb R^n \times L^2([-d,0];\mathbb R^n)$): Partial finite dimensional $C^1$ and $C^{1,\alpha}$ regularity\footnote{This partial $C^1$ and $C^{1,\alpha}$ regularity is with respect to the so-called ``present'' variable $x_0 \in \mathbb R^n$.} for viscosity solutions of the HJB equations related to optimal control problems with delays only in the state were obtained in \cite{goldys_1} for first order equations, in \cite{rosestolato} for a linear second order Kolmogorov equation and recently in \cite{deFeo-Federico-Swiech} for a class of fully nonlinear second order equations. Regarding the case of problems with delays in the state and in the control, a directional $C^1$ regularity result was obtained in  \cite{tacconi} for a first order HJB equation (that is, for the deterministic case), while for stochastic problems we refer to  \cite{deFeo_2023} for remarks about why the method of \cite{deFeo-Federico-Swiech} could not be applied there to obtain the partial $C^{1,\alpha}$ regularity. Hence, up to our knowledge, there are no ``full'' $C^1$ regularity results available in the literature for viscosity solutions of (first or second order) HJB equations in Hilbert spaces with unbounded operators.


Using viscosity solutions, on the one hand we do not have any a priori regularity of a solution, however on the other hand, contrary to the case of mild solutions, we know in advance that the value function is the unique viscosity solution of \eqref{HJB}. This is a big help which was exploited in \cite{mayorga_swiech_2022} to perform optimal synthesis. In this work, based on the $C^{1,1}$ regularity of the value function in the space variable, the authors construct optimal feedback controls in a special case coming from a mean field control problem (mentioned above). Let us also mention that for deterministic problems with delays, existence of optimal feedback control was obtained in \cite{goldys_2} (using the partial regularity result of \cite{goldys_1}) and in \cite{tacconi}. A similar result was obtained in \cite{deFeoSwiech} for stochastic optimal control problems with delay using $B$-continuous viscosity solutions, a partial regularity result from \cite{deFeo-Federico-Swiech} and an elaborate double approximation procedure which permitted to find approximations of the value function which are regular enough so that  It\^o's formula could be used in order to prove a verification theorem.

\subsection{Our Results}
This manuscript has two main goals. The first is to prove $C^{1,1}$ regularity in the space variable $x$ of the value function for the stochastic optimal control problem \eqref{costfunctional}--\eqref{state}. The second is to show, in the semilinear case, how to use the HJB equation \eqref{HJB} and the notion of $B$-continuous viscosity solution to construct optimal feedback controls under this minimal regularity assumption.

In order to prove $C^{1,1}$ regularity in $x$ of the value function in the fully nonlinear case, we prove its semiconcavity and semiconvexity. The $C^{1,1}$ regularity is then well-known, see \cite{lasry1986}. The semiconcavity is an expected property and it is proved by adapting and modifying the proof in the finite dimensional case from \cite{yong1999}. The semiconvexity, on the other hand, is less standard and we prove it in three different cases: (1) when the running cost is strongly uniformly convex in the control variable; (2) when the state equation is linear and the cost is convex; (3) via comparison results for mild solutions of the state equation. In the third case, we need a comparison result for SPDEs, and existing results such as \cite{kotelenez1992,manthey1999,milian2002} do not apply to our situation. Therefore, along the way, we prove two comparison results, Theorems \ref{comparisonSPDEs1} and \ref{comparisonSPDEs}, which may be of independent interest.
 Our $C^{1,1}$ regularity results are, to the best of our knowledge, the first continuous Fr\'echet differentiability\footnote{With the approaches via mild solutions or BSDEs, usually only Gateaux differentiability is obtained.} results for the value function of optimal control problems in Hilbert spaces with unbounded operators (hence, under appropriate conditions, also for solutions of the corresponding HJB equation) when diffusion coefficients may be completely degenerate. We point out that our results and techniques can also be applied to deterministic problems, i.e., when $\sigma=0$. 

Regarding the second goal, in the semilinear case, we use the notion of $B$-continuous viscosity solutions of the HJB equation \eqref{HJB} to construct optimal feedback controls under the minimal regularity assumption $V(t,\cdot)\in C^{1,1}$. In this respect, under appropriate conditions, we extend  the method of \cite{mayorga_swiech_2022} to the case of general semilinear HJB equations in Hilbert spaces with unbounded operators. Our method is based on the simple observation that, if the value function $V$ is $C^{1,1}$ regular in $x$, then, under appropriate conditions, it is also a  viscosity solution of a linear Kolmogorov equation for which we have an explicit solution given by a Feynman--Kac formula. It then follows that the underlying SDE for this solution formula gives the optimal trajectory and the optimal feedback control. This way, a proof of a verification theorem (which requires to apply It\^o's formula to the value function) is avoided. The ideas behind this method are straightforward but we are not aware of any explicit use of it (other than \cite{mayorga_swiech_2022}) in the context of viscosity solutions. We remark that we obtain uniqueness of the solutions of the closed-loop equation in the original reference probability space, while usually only existence of weak  solutions is obtained via Girsanov's theorem. We also point out that the optimal synthesis result can be applied to deterministic problems. The results of the present paper are, up to our knowledge, the first results on optimal synthesis for control problems governed by SPDEs using viscosity solutions. Moreover, they apply to problems in general Hilbert spaces with unbounded operators, including the ones related to delay equations.

A limitation of using the notion of $B$-continuous viscosity solution is that the operator $A$ in \eqref{state} and \eqref{HJB} must be maximal dissipative. Moreover the theory requires $A$ to satisfy one of the two conditions, the so-called strong $B$-condition or the weak 
$B$-condition. Examples of operators satisfying these conditions can be found in \cite[Section 3.1.1]{fabbri2017}. Another example of an operator $A$ satisfying the weak $B$-condition is the operator coming from stochastic delay differential equations, see \cite{deFeo-Federico-Swiech}. However, it is worth pointing out that the $C^{1,1}$ regularity results obtained in Section \ref{sec:regularity} also hold when $A=0$, that is they apply to value functions of stochastic optimal control problems in Hilbert spaces driven by bounded evolution. Moreover, since the operator $A=0$ satisfies the strong $B$-condition with $B=I$ and $c_0=1$, the results and the optimal synthesis procedure presented in Section \ref{sec:optsynt} also hold when HJB equations do not have unbounded terms. Thus our results extend those of \cite{mayorga_swiech_2022} for optimal control problems with bounded evolution, where only a special class of such problems was considered.

\subsection{Outline of the Paper}
The paper is organized as follows. In Section \ref{sec:defnotass} we introduce the notation and the basic assumptions we will use throughout the rest of the paper. In Section \ref{sec:regularity} we prove regularity results for the value function and deduce its $C^{1,1}$ regularity. Based on this regularity result, in Section \ref{sec:optsynt} we construct optimal feedback controls. In Section \ref{weakBcase}, we extend our results to SDEs involving unbounded operators that satisfy the weak $B$-condition. Finally, in Section \ref{section:applications}, we discuss applications to controlled stochastic partial differential equations and, motivated by problems coming from economics, to controlled stochastic delay differential equations.

\section{Definitions, Notation and Basic Assumptions}\label{sec:defnotass}

Throughout the paper, $H$ is a real, separable Hilbert space with inner product $\langle\cdot,\cdot\rangle_H$ and norm $\|\cdot\|_H$, and 
$\Lambda_0$ is a convex subset of a real, separable Banach space $\Lambda$ with norm $\|\cdot\|_{\Lambda}$. The variables in $\Lambda$ will be denoted by $a$. Moreover, $\Xi$ is a real, separable Hilbert space and we fix $p>2$. We will write $B_R\subset H$ to denote the closed ball of radius $R>0$ in $H$.

We say that $\mu=(\Omega, \mathscr{F}, (\mathscr{F}^t_s)_{s\in [t,T]}, \mathbb{P}, (W(s))_{s\in [t,T]})$ is a reference probability space (cf. \cite[Definition 2.7]{fabbri2017}) if:
\begin{itemize}
    \item $(\Omega, \mathscr{F},\mathbb{P})$ is a complete probability space.
    \item $(W(s))_{s\in[t,T]}$ is a cylindrical Wiener process in $\Xi$ with $W(t) = 0$ $\mathbb{P}$--a.s.
    \item $\mathscr{F}^t_s = \sigma(\mathscr{F}^{t,0}_s,\mathcal{N})$, where $\mathscr{F}^{t,0}_s = \sigma(W(\tau) : t\leq \tau \leq s )$ is the filtration generated by $W$, and $\mathcal{N}$ is the collection of all $\mathbb{P}$--null sets in $\mathscr{F}$.
\end{itemize}
The set of admissible controls $\mathcal{U}_t$ consists of all $\mathscr{F}^t_s$--progressively measurable processes $a(\cdot):[t,T]\times\Omega \to \Lambda_0$ such that
\begin{equation}\label{eq:integr}
    \mathbb{E} \left [ \int_t^T \|a(s) \|_{\Lambda}^p \mathrm{d}s \right ] <\infty.
\end{equation}
\begin{remark}
    Note that condition \eqref{eq:integr} is only relevant if $\Lambda_0$ is unbounded. Furthermore, the requirement that $p>2$ is necessary in order to guarantee continuity of the paths of the solution of the state equation \eqref{state} which is used explicitly and implicitly in many parts of the paper.
\end{remark}
We define the value function $V:[0,T]\times H\to \mathbb{R}$ as
\begin{equation*}
	V(t,x) := \inf_{a(\cdot)\in \mathcal{U}_t} J(t,x;a(\cdot)),
\end{equation*}
where $J$ is given by equation \eqref{costfunctional}. Note that this definition of the value function coincides with the definition in which the infimum is also taken over all reference probability spaces. For more details, see \cite[Chapter 2 and in particular Theorem 2.22]{fabbri2017}.

Next, we introduce the spaces which will be used in the paper. Let $\mathcal{X}_1,\mathcal{X}_2$ be Banach spaces. By $L(\mathcal{X}_1,\mathcal{X}_2)$ we denote the space of bounded linear operators mapping from $\mathcal{X}_1$ to $\mathcal{X}_2$. For Hilbert spaces $H_1,H_2$, we denote by $L_2(H_1,H_2)$ the space of Hilbert--Schmidt operators. In the case $H=H_1=H_2$, we denote $L(H) = L(H,H)$, and $S(H)\subset L(H)$ denotes the subspace of self-adjoint operators. 
If $C\in L(H)$, $C\geq 0$ means that $\langle Cx,x\rangle_H \geq 0$ for all $x\in H$. We say that $B\in S(H)$ is strictly positive if $\langle Bx,x\rangle_H> 0$ for all $x\in H, x\not=0$.

We denote by $C^{1,2}((0,T)\times H)$ the space of functions $\varphi:(0,T)\times H\to\mathbb R$ such that, denoting the variables by $(t,x)$, $\partial_t\varphi, D\varphi, D^2\varphi$ are continuous, where $\partial_t\varphi$ is the partial derivative of $\varphi$ with respect to $t$ and $D\varphi, D^2\varphi$ are the first and second order Fr\'echet derivatives of $\varphi$ with respect to the $x$ variable. It $v:H\times\Lambda \to Z$, where $Z$ is a Banach space we will write $D_x v, D_a v$ to denote partial Fr\'echet derivatives of $v$ with respect to the variables $x$ and $a$ respectively. By $C^{1,1}(\mathcal{X}_1,\mathcal{X}_2)$ we denote the space of all once Fr\'echet differentiable mappings $v:\mathcal{X}_1\to \mathcal{X}_2$ with Lipschitz continuous derivative. In the case $\mathcal{X}_2=\mathbb{R}$, we write $C^{1,1}(\mathcal{X}_1) := C^{1,1}(\mathcal{X}_1,\mathbb{R})$. 

Throughout the paper, $A$ is a linear, densely defined, maximal dissipative operator in $H$. Note that $A$ is the generator of a $C_0$-semigroup of contractions on $H$. We will denote this semigroup by $(\mathrm{e}^{sA})_{s\geq 0}$. However, we refer to \cite[Remark 3.38]{fabbri2017} and \cite{rosestolato} for standard techniques that allow to treat the more general case in which $A-\lambda I$ is maximal dissipative for some $\lambda>0$. Since in some parts of the paper we will have to assume that the functions $b,\sigma$ are differentiable, to avoid any confusion we will assume that the functions $b,\sigma, l$ are defined on the whole space $H\times \Lambda$, however all conditions will be required to hold only on $H\times \Lambda_0$. In some parts of the paper we will also use the following two assumptions on the operator $A$.
\begin{assumption}\label{assumptionA}(Strong $B$-condition)
There is a strictly positive, self-adjoint operator $B\in L(H)$ such that $A^{\ast} B\in L(H)$ and for some $c_0\geq 0$
    \begin{equation*}
        -A^{\ast} B+c_0 B \geq I.
    \end{equation*}
\end{assumption}

\begin{assumption}\label{assumptionAw}(Weak $B$-condition)
There is a strictly positive, self-adjoint operator $B\in L(H)$ such that $A^{\ast} B\in L(H)$ and for some $c_0\geq 0$
    \begin{equation*}
        -A^{\ast} B+c_0 B \geq 0.
    \end{equation*}
\end{assumption}
We refer to \cite[Section 3.1.1]{fabbri2017} for the proof that the weak $B$-condition is always satisfied with $B=((I-A)(I-A^{\ast}))^{-\frac{1}{2}}$ and $c_0=1$. We also refer to \cite[Section 3.1.1]{fabbri2017} for examples of operators satisfying the strong and the weak $B$-conditions. An important case of an operator $A$ satisfying the weak $B$-condition is the operator coming from optimal control problems with delay from
\cite[Section 3.2]{deFeo-Federico-Swiech} (denoted $\tilde A$ there), where $B=(A^{-1})^{\ast} A^{-1}$ and $c_0=0$ are taken.

Using the operator $B$ from Assumption \ref{assumptionA} or Assumption \ref{assumptionAw}, we extend the space $H$ in the following way.
\begin{definition}
    Let $B\in S(H)$ be strictly positive. The space $H_{-1}$ is the completion of $H$ with respect to the norm
    \begin{equation*}
        \|x\|_{-1}^2 := \langle Bx,x \rangle_H.
    \end{equation*}
\end{definition}
Note that $H_{-1}$ is a Hilbert space when endowed with the inner product
\begin{equation*}
    \langle x,y\rangle_{-1} := \langle B^{\frac12} x, B^{\frac12} y \rangle_H.
\end{equation*}

\begin{remark}\label{rem:assumptions_weakstrong_B}
We remark that Assumptions \ref{assumptionA}, \ref{assumptionAw} are not used in order to obtain the regularity results in $H$ in Section \ref{sec:regularity}. Assumption  \ref{assumptionA} is needed for comparison for the associated HJB equation, including the continuity of $V(t,\cdot)$ with respect to the $\|\cdot\|_{-1}$ norm. Hence it is only used to obtain the results of Section \ref{sec:optsynt}. Assumption \ref{assumptionAw} is used for similar purposes and to obtain better regularity results for $V(t,\cdot)$ in the space $H_{-1}$, however, since the whole analysis of Section \ref{sec:optsyntw} is done in $H_{-1}$, it is needed for all results of Section \ref{weakBcase}.
\end{remark}

The notions of semiconcavity and semiconvexity will play a crucial role in our work.

\begin{definition}\label{def:semiconcave}
    A function $v:H\to\mathbb{R}$ is called semiconcave if there is a constant $C>0$ such that 
    \begin{equation*}
        \lambda v(x) + (1-\lambda) v(x^{\prime}) - v(\lambda x + (1-\lambda)x^{\prime}) \leq C \lambda(1-\lambda) \| x - x^{\prime} \|_H^2
    \end{equation*}
    for all $\lambda \in [0,1]$ and $x,x^{\prime}\in H$. $v$ is called semiconvex if $-v$ is semiconcave.
\end{definition}
The constant $C$ in Definition \ref{def:semiconcave} is called the {\it semiconcavity constant} of $v$. The semiconcavity constant of $-v$ is called the {\it semiconvexity constant} of $v$.

We also introduce the following notion of semiconcavity and semiconvexity in $H_{-1}$.
\begin{definition}\label{def:semiconvex}
    A function $v:H\to\mathbb{R}$ is called semiconcave in $H_{-1}$ if there is a constant $C>0$ such that 
    \begin{equation*}
        \lambda v(x) + (1-\lambda) v(x^{\prime}) - v(\lambda x + (1-\lambda)x^{\prime}) \leq C \lambda(1-\lambda) \| x - x^{\prime} \|_{-1}^2
    \end{equation*}
    for all $\lambda \in [0,1]$ and $x,x^{\prime}\in H$. $v$ is called semiconvex in $H_{-1}$ if $-v$ is semiconcave.
\end{definition}
When it is clear that we deal with functions $v$ which are semiconcave/semiconvex in $H_{-1}$, the constant $C$ in Definition \ref{def:semiconvex} will also be called the semiconcavity constant of $v$ and the semiconcavity constant of $-v$ will be called the semiconvexity constant of $v$. Note that the function $v$ in Definition \ref{def:semiconvex} does not need to be defined on $H_{-1}$ but only on $H$.

\section{Regularity of the Value Function}\label{sec:regularity}

\subsection{Lipschitz Continuity}\label{lipschitz}
In this section, we prove Lipschitz continuity of the value function. We impose the following assumptions on the coefficients of the state equation.

\begin{assumption}\label{bsigmalipschitzfirstvariable}
\begin{enumerate}[label=(\roman*)]
    \item The function $b$ is continuous on $H\times \Lambda_0$ and there exists a constant $C>0$ such that
    \begin{equation*}
        \|b(x,a)-b(x^{\prime},a) \|_H \leq C \|x-x^{\prime} \|_H
    \end{equation*}
    for all $x,x^{\prime}\in H$ and $a\in\Lambda_0$.
    \item There exists a constant $C>0$ such that
    \begin{equation*}
        \|b(x,a)\|_H \leq C(1+\|x\|_H+\|a\|_{\Lambda} )
    \end{equation*}
    for all $x\in H$ and $a\in\Lambda_0$.
    \item The function $\sigma$ is continuous on $H\times \Lambda_0$ and there exists a constant $C>0$ such that
    \begin{equation*}
        \|\sigma(x,a)-\sigma(x^{\prime},a) \|_{L_2(\Xi,H)} \leq C \|x-x^{\prime} \|_H
    \end{equation*}
    for all $x,x^{\prime}\in H$ and $a\in\Lambda_0$.
    \item There exists a constant $C>0$ such that
    \begin{equation*}
        \|\sigma(x,a)\|_{L_2(\Xi,H)} \leq C(1+\|x\|_H+\|a\|_{\Lambda} )
    \end{equation*}
    for all $x\in H$ and $a\in\Lambda_0$.
\end{enumerate}
\end{assumption}

\begin{assumption}\label{bsigmalipschitz}
	\begin{enumerate}[label=(\roman*)]
		\item There exists a constant $C>0$ such that
		\begin{equation*}
			\| b(x,a) - b(x^{\prime},a^{\prime} ) \|_H \leq C (\|x-x^{\prime}\|_H + \|a-a^{\prime}\|_{\Lambda} )
		\end{equation*}
		for all $x,x^{\prime}\in H$ and $a,a^{\prime}\in \Lambda_0$.
		\item There exists a constant $C>0$ such that
		\begin{equation*}
			\| \sigma(x,a) - \sigma(x^{\prime},a^{\prime} ) \|_{L_2(\Xi,H)} \leq C (\|x-x^{\prime}\|_H + \|a-a^{\prime}\|_{\Lambda} )
		\end{equation*}
		for all $x,x^{\prime}\in H$ and $a,a^{\prime}\in \Lambda_0$.
	\end{enumerate}
\end{assumption}
We will discuss the case of time-dependent coefficients in Remark \ref{rem:time_dependent_coeff}.

Under these assumptions, for fixed $t \in [0,T], x \in H ,$ $ a(\cdot) \in \mathcal U_t$, the state equation \eqref{state} has a unique solution with continuous trajectories, denoted by $X(s;t,x,a(\cdot))$, see, e.g., \cite[Chapter 6, Theorem 6.5]{chow2007}; see also \cite[Theorem 7.2]{DZ14}\footnote{Note that, in contrast to our situation, in \cite{DZ14}, the coefficients of the state equation, depending on 
$\omega$, are uniformly bounded in $\omega$ (while in our case this is not true due the their unboundedness in the control). However, due to  \eqref{eq:integr}, a straightforward modification of the arguments leads to the desired existence and uniqueness result.}. The solution satisfies
\begin{equation}\label{xeq:mildsupest}
	\mathbb{E}\left [\sup_{s\in[t,T]}\|X(s;t,x,a(\cdot))\|_H^p\right ]\leq C_p(T)\left(1+\|x\|_H^p
	+\mathbb{E}\int_t^T \|a(r) \|_{\Lambda}^p \mathrm{d}r \right).
\end{equation}

For $x_0,x_1\in H$ and $a_0(\cdot),a_1(\cdot)\in \mathcal{U}_t$, define
\begin{equation}\label{x1x0differentcontrols}
	\begin{cases}
		X_0(s) = X(s;t,x_0,a_0(\cdot))\\
		X_1(s)=X(s;t,x_1,a_1(\cdot)). 
	\end{cases}
\end{equation}

In the proof of Lemma \ref{estimatex1x0one} below and later, we will use \cite[Proposition 1.166]{fabbri2017}. Note that in \cite{fabbri2017}, the coefficients of the state equation are uniformly bounded in the control variable, while in our case they are not. Nevertheless, adapting the proof to our case, the functions $f$ and $\Phi$ defined in the proof of \cite[Proposition 1.166]{fabbri2017} are still in $M^p_{\mu}(t,T;H)$ and $\mathcal{N}_I^p(t,T;H)$ (since $Q=I$ in our case), respectively, and thus the proof can be carried out verbatim.

We have the following estimates.
\begin{lemma}\label{estimatex1x0one}
	\begin{enumerate}[label=(\roman*)]
		\item Let Assumption \ref{bsigmalipschitzfirstvariable} be satisfied. Let $a_0(\cdot) = a_1(\cdot) = a(\cdot)\in \mathcal{U}_t$. Then, there are constants $C_1,C_2\geq 0$ independent of $T$, such that
		\begin{equation*}
			\mathbb{E} \left [ \sup_{s\in [t,T]} \| X_1(s) - X_0(s) \|_H^{2} \right ] \leq C_1 \mathrm{e}^{C_2(T-t)}\|x_1 - x_0 \|_H^{2}
		\end{equation*}
        for all $x_0,x_1\in H$ and $a(\cdot)\in \mathcal{U}_t$.
		\item Let Assumption \ref{bsigmalipschitz} be satisfied. Then, there are constants $C_1,C_2\geq 0$ independent of $T$, such that
		\begin{equation}\label{eq:strongest}
			\mathbb{E} \left [ \sup_{s\in [t,T]} \| X_1(s) - X_0(s) \|_H^{2} \right ] \leq C_1 \mathrm{e}^{C_2(T-t)} \left ( \|x_1 - x_0 \|_H^{2} + \mathbb{E} \left [ \int_t^T \| a_1(s) - a_0(s) \|_{\Lambda}^2 \mathrm{d}s \right ] \right )
		\end{equation}
        for all $x_0,x_1\in H$ and $a_0(\cdot), a_1(\cdot) \in \mathcal{U}_t$.
	\end{enumerate}
\end{lemma}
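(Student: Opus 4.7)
The plan is to work with the mild formulation of the state equation. Since $A$ generates the $C_0$-semigroup of contractions $(e^{sA})_{s\geq 0}$, for $i=0,1$ the solutions admit the representation
\[
X_i(s) = e^{(s-t)A} x_i + \int_t^s e^{(s-r)A} b(X_i(r),a_i(r))\,\mathrm{d}r + \int_t^s e^{(s-r)A} \sigma(X_i(r),a_i(r))\,\mathrm{d}W(r).
\]
Setting $Y(s):=X_1(s)-X_0(s)$, subtraction gives
\[
Y(s) = e^{(s-t)A}(x_1-x_0) + \int_t^s e^{(s-r)A} f(r)\,\mathrm{d}r + \int_t^s e^{(s-r)A}\Phi(r)\,\mathrm{d}W(r),
\]
where $f(r):=b(X_1(r),a_1(r))-b(X_0(r),a_0(r))$ and $\Phi(r):=\sigma(X_1(r),a_1(r))-\sigma(X_0(r),a_0(r))$.

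The next step is to apply the maximal inequality for mild solutions provided by \cite[Proposition 1.166]{fabbri2017}, valid in our setting by the adaptation noted just before the lemma. This yields, for some constant $C=C(T,p)$ and any $\tau\in[t,T]$,
\[
\mathbb{E}\Bigl[\sup_{s\in[t,\tau]}\|Y(s)\|_H^2\Bigr] \leq C\Bigl(\|x_1-x_0\|_H^2 + \mathbb{E}\int_t^\tau \|f(r)\|_H^2\,\mathrm{d}r + \mathbb{E}\int_t^\tau \|\Phi(r)\|_{L_2(\Xi,H)}^2\,\mathrm{d}r\Bigr),
\]
where the contraction property of $e^{sA}$ ensures the constant is independent of $T$ in the exponent.

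For part (i), when $a_0(\cdot)=a_1(\cdot)$, Assumption~\ref{bsigmalipschitzfirstvariable} gives $\|f(r)\|_H \leq C\|Y(r)\|_H$ and $\|\Phi(r)\|_{L_2(\Xi,H)} \leq C\|Y(r)\|_H$, so the right-hand side is dominated by $C\|x_1-x_0\|_H^2 + C\int_t^\tau \mathbb{E}[\sup_{u\in[t,r]}\|Y(u)\|_H^2]\,\mathrm{d}r$. A standard application of Gronwall's inequality delivers the desired estimate with exponential dependence on $T-t$. For part (ii), Assumption~\ref{bsigmalipschitz} produces the additional term $C\|a_1(r)-a_0(r)\|_\Lambda$ in the bounds on $\|f\|_H$ and $\|\Phi\|_{L_2(\Xi,H)}$; after squaring and using $(a+b)^2\leq 2a^2+2b^2$, the estimate acquires the extra summand $C\,\mathbb{E}\int_t^T\|a_1(s)-a_0(s)\|_\Lambda^2\,\mathrm{d}s$ on the right, and Gronwall again concludes.

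The main technical point, rather than an obstacle, is verifying that \cite[Proposition 1.166]{fabbri2017} applies despite the coefficients being unbounded in the control: this is ensured by the integrability condition \eqref{eq:integr} combined with the a priori estimate \eqref{xeq:mildsupest}, which places $f$ in the required $M^p_\mu(t,T;H)$ space and $\Phi$ in $\mathcal{N}_I^p(t,T;H)$, as noted before the lemma's statement.
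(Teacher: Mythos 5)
Your route is genuinely different from the paper's. The paper does not use the mild formulation at all at this point: it applies \cite[Proposition 1.166]{fabbri2017}, which is an It\^o-type \emph{inequality for the square of the $H$-norm} of the difference of mild solutions (the unbounded term $\langle A(X_1-X_0),X_1-X_0\rangle_H$ is discarded using maximal dissipativity of $A$), and then estimates the resulting drift and martingale terms via Cauchy--Schwarz and Burkholder--Davis--Gundy before applying Gr\"onwall. You instead subtract the two mild formulas and estimate the three terms separately. That is a legitimate alternative, but you have mislabeled your main tool: Proposition 1.166 is \emph{not} the maximal inequality
\[
\mathbb{E}\Bigl[\sup_{s\in[t,\tau]}\|Y(s)\|_H^2\Bigr]\leq C\Bigl(\|x_1-x_0\|_H^2+\mathbb{E}\int_t^\tau\|f\|_H^2\,\mathrm{d}r+\mathbb{E}\int_t^\tau\|\Phi\|_{L_2(\Xi,H)}^2\,\mathrm{d}r\Bigr)
\]
that you invoke. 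What you actually need for the stochastic convolution is the maximal inequality for stochastic convolutions with contraction semigroups (Tubaro/Da Prato--Zabczyk type), which is a different result; it does hold here with a universal constant, so the step can be repaired, but as written the citation does not support the inequality you use.

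The second, more substantive point concerns the form of the constant. The lemma asserts the bound $C_1\mathrm{e}^{C_2(T-t)}$ with $C_1,C_2$ \emph{independent of $T$}. In your scheme the deterministic convolution is controlled by
\[
\sup_{s\in[t,\tau]}\Bigl\|\int_t^s \mathrm{e}^{(s-r)A}f(r)\,\mathrm{d}r\Bigr\|_H^2\leq\Bigl(\int_t^\tau\|f(r)\|_H\,\mathrm{d}r\Bigr)^2\leq(\tau-t)\int_t^\tau\|f(r)\|_H^2\,\mathrm{d}r,
\]
so the Gr\"onwall coefficient acquires a factor $(T-t)$ and the final bound comes out as $\mathrm{e}^{C(1+(T-t))(T-t)}$, i.e.\ quadratic in $T-t$ in the exponent. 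Your remark that ``the contraction property ensures the constant is independent of $T$ in the exponent'' does not address this. The paper avoids the issue because in its It\^o-inequality approach the drift contributes $2\int_t^s\langle b(X_1,a_1)-b(X_0,a_0),X_1-X_0\rangle_H\,\mathrm{d}r\leq C\int_t^s(\|Y\|_H^2+\|a_1-a_0\|_\Lambda^2)\,\mathrm{d}r$ with an absolute constant, so Gr\"onwall yields exactly $\mathrm{e}^{C(T-t)}$. Since $T$ is fixed in this paper your weaker bound would still suffice for all subsequent applications, and the exact form could be recovered by iterating your estimate over time intervals of unit length, but as stated your argument does not prove the lemma in the precise quantitative form claimed.
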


\begin{proof}
    We are only going to prove part $(ii)$; the proof of part $(i)$ follows along the same lines.

	Applying \cite[Proposition 1.166]{fabbri2017}, we have
	\begin{equation}\label{itosformula3}
		\begin{split}
			&\| X_1(s) - X_0(s) \|_H^2\\
			&\leq \| x_1 - x_0 \|_H^2 + 2 \int_t^s \langle b(X_1(r),a_1(r)) - b(X_0(r),a_0(r)) , X_1(r) - X_0(r) \rangle_H \mathrm{d}r\\
			&\quad + \int_t^s \| \sigma(X_1(r),a_1(r)) - \sigma(X_0(r),a_0(r)) \|_{L_2(\Xi,H)}^2 \mathrm{d}r\\
			&\quad + 2 \int_t^s \langle X_1(r)-X_0(r), (\sigma(X_1(r),a_1(r)) - \sigma(X_0(r),a_0(r))) \mathrm{d}W(r) \rangle_H.
		\end{split}
	\end{equation}
	Due to Burkholder--Davis--Gundy inequality (see, e.g., \cite[Theorem 1.80]{fabbri2017}) and the Lipschitz continuity of $\sigma$, we have for any $t<T^{\prime}\leq T$,
	\begin{equation*}
		\begin{split}
			&\mathbb{E} \left [ \sup_{s\in [t,T^{\prime}]} \left | \int_t^s \langle X_1(r)-X_0(r), (\sigma(X_1(r),a_1(r)) - \sigma(X_0(r),a_0(r))) \mathrm{d}W(r) \rangle_H \right | \right ]\\
			&\leq C \mathbb{E} \left [ \left ( \int_t^{T^{\prime}} \| X_1(s) - X_0(s) \|_H^2 \left ( \| X_1(s) - X_0(s) \|_H^2 + \| a_1(s) - a_0(s) \|_{\Lambda}^2 \right ) \mathrm{d}s \right )^{\frac12} \right ]\\
			&\leq \frac14 \mathbb{E} \left [ \sup_{s\in [t,T^{\prime}]} \| X_1(s) - X_0(s) \|_H^2 \right ] + C \mathbb{E} \left [ \int_t^{T^{\prime}} \| X_1(s) - X_0(s) \|_H^2 + \| a_1(s) - a_0(s) \|_{\Lambda}^2 \mathrm{d}s \right ].
		\end{split}
	\end{equation*}
	Therefore, taking the supremum over $[t,T^{\prime}]$ in \eqref{itosformula3}, taking the expectation, and using the Lipschitz continuity of $b$ and $\sigma$, we obtain
	\begin{equation*}
		\begin{split}
			&\mathbb{E} \left [ \sup_{s\in [t,T^{\prime}]} \| X_1(s) - X_0(s) \|_H^2 \right ]\\
			&\leq C \| x_1 - x_0 \|_H^2 + C \int_t^{T^{\prime}} \mathbb{E} \left [ \sup_{r\in [t,s]} \| X_1(r) - X_0(r) \|_H^2 + \| a_1(s) - a_0(s) \|_{\Lambda}^2 \right ] \mathrm{d}s.
		\end{split}
	\end{equation*}
	Applying Gr\"onwall's inequality yields the claim.
\end{proof}

\begin{assumption}\label{lglipschitzfirstvariable}
\begin{enumerate}[label=(\roman*)]
    \item The function $l$ is continuous on $H\times\Lambda_0$ and there exists a constant $C>0$ such that
    \begin{equation*}
      |l(x,a) - l(x^{\prime},a)| \leq C \|x-x^{\prime}\|_H
    \end{equation*}
    for all $x,x^{\prime}\in H$ and $a\in \Lambda_0$.
     \item There exists a constant $C>0$ such that
    \begin{equation*}
      |l(x,a)| \leq C (1+\|x\|_H+\|a\|_{\Lambda}^2)
    \end{equation*}
    for all $x\in H$ and $a\in \Lambda_0$.
    \item There exists a constant $C>0$ such that
    \begin{equation*}
        |g(x) - g(x^{\prime})| \leq C \|x-x^{\prime}\|_H
    \end{equation*}
    for all $x,x^{\prime}\in H$.
    \item If $\Lambda_0$ is unbounded we assume that $l$ and $g$ are bounded from below.
\end{enumerate}
\end{assumption}

\begin{remark}\label{rem:time_dependent_coeff}
    All the results in the present paper extend to the case that $b$, $\sigma$, $l$ also depend on time $t$. In this case, we also require that for every $R>0,$ the functions $b(\cdot,x,a)$, $\sigma(\cdot,x,a)$, $l(\cdot,x,a)$ are uniformly continuous, uniformly in $x \in H,$ $\|x\|_H \leq R,$  $a\in \Lambda_0$.
\end{remark}

Assumption {\it{(iv)}} above is made to guarantee that the value function is well defined. It can be replaced by a different assumption guaranteeing an appropriate growth of $l$ in the control variable or by imposing restrictions on the quantity \eqref{eq:integr} for admissible controls, however we do not pursue this direction here. We note that it follows easily from \eqref{xeq:mildsupest} and Assumption \ref{lglipschitzfirstvariable} that $|V(t,x)|\leq C(1+\|x\|_H)$ for all $(t,x)\in [0,T]\times H$.
\begin{theorem}\label{th:V_lip}
    Let Assumptions \ref{bsigmalipschitzfirstvariable} and \ref{lglipschitzfirstvariable} be satisfied. Then there are constants $C_1,C_2$ independent of $T$, such that
	\begin{equation*}
		|V(t,x) - V(t,y)| \leq C_1 \mathrm{e}^{C_2(T-t)} \|x-y\|_H
	\end{equation*}
	for all $t\in [0,T]$ and $x,y\in H$.
\end{theorem}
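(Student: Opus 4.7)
The plan is to prove this by comparing, for a fixed but arbitrary admissible control $a(\cdot)\in\mathcal{U}_t$, the cost incurred when starting from $x$ versus from $y$, and then taking the infimum in $a(\cdot)$. Since $V$ is an infimum of the family $\{J(t,\cdot;a(\cdot))\}_{a(\cdot)\in\mathcal{U}_t}$, the Lipschitz bound on $V$ will follow from a uniform-in-$a(\cdot)$ Lipschitz bound on each $J(t,\cdot;a(\cdot))$.

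Fix $t\in[0,T]$, $x,y\in H$, and $a(\cdot)\in\mathcal{U}_t$. Set $X^x(\cdot):=X(\cdot;t,x,a(\cdot))$ and $X^y(\cdot):=X(\cdot;t,y,a(\cdot))$, which are well defined under Assumption \ref{bsigmalipschitzfirstvariable}. I would write
\begin{equation*}
J(t,x;a(\cdot))-J(t,y;a(\cdot))=\mathbb{E}\!\left[\int_t^T\!\!\bigl(l(X^x(s),a(s))-l(X^y(s),a(s))\bigr)\mathrm{d}s+g(X^x(T))-g(X^y(T))\right]\!,
\end{equation*}
and apply Assumption \ref{lglipschitzfirstvariable}\textit{(i)}, \textit{(iii)} (i.e.\ the uniform-in-$a$ Lipschitz continuity of $l$ in the state variable and the Lipschitz continuity of $g$) to get
\begin{equation*}
|J(t,x;a(\cdot))-J(t,y;a(\cdot))|\leq C\,\mathbb{E}\!\left[\int_t^T\|X^x(s)-X^y(s)\|_H\,\mathrm{d}s+\|X^x(T)-X^y(T)\|_H\right]\!.
\end{equation*}
By Jensen's (or Cauchy--Schwarz) inequality, both terms are dominated by a constant multiple of $(T-t+1)\bigl(\mathbb{E}[\sup_{s\in[t,T]}\|X^x(s)-X^y(s)\|_H^2]\bigr)^{1/2}$.

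At this point Lemma \ref{estimatex1x0one}\textit{(i)}, applied with $a_0(\cdot)=a_1(\cdot)=a(\cdot)$, gives
\begin{equation*}
\mathbb{E}\!\left[\sup_{s\in[t,T]}\|X^x(s)-X^y(s)\|_H^2\right]\leq C_1\mathrm{e}^{C_2(T-t)}\|x-y\|_H^2,
\end{equation*}
so, absorbing the polynomial factor $(T-t+1)$ into the exponential at the cost of slightly enlarging $C_2$, I obtain a bound $|J(t,x;a(\cdot))-J(t,y;a(\cdot))|\leq \tilde C_1\mathrm{e}^{\tilde C_2(T-t)}\|x-y\|_H$ that is \emph{independent of $a(\cdot)$}. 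Taking the infimum over $a(\cdot)\in\mathcal{U}_t$ on each side and then swapping the roles of $x$ and $y$ yields the claim. There is no real obstacle in this argument; the only point to verify is that the Lipschitz constants delivered by Assumption \ref{lglipschitzfirstvariable} are indeed uniform in $a\in\Lambda_0$ (they are, by assumption), so that the estimate survives passage to the infimum over controls despite the fact that admissible controls are unbounded in $\Lambda$.
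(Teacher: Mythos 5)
Your proposal is correct and follows essentially the same route as the paper's proof: both reduce the claim to a Lipschitz bound on $J(t,\cdot;a(\cdot))$ uniform in $a(\cdot)$, obtained from Assumption \ref{lglipschitzfirstvariable} and Lemma \ref{estimatex1x0one}(i), and then pass to the infimum over controls. Your explicit use of Jensen/Cauchy--Schwarz to move from the second-moment estimate of the lemma to the first moment is the same (implicit) step taken in the paper.
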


\begin{proof}
    Fix $t\in [0,T]$, $x_0,x_1\in H$ and $a(\cdot)\in\mathcal{U}_t$. Let $X_0$ and $X_1$ be given by equation \eqref{x1x0differentcontrols} with $a_0(\cdot) = a_1(\cdot) = a(\cdot)$. Using Lipschitz continuity of $l$ and $g$ in the $x$-variable and Lemma \ref{estimatex1x0one}(i) we have
    \begin{equation*}
    \begin{split}
        & |J(t,x_1;a(\cdot)) - J(t,x_0;a(\cdot))| \\
        &\leq C \mathbb{E} \left [ \int_t^T \| X_1(s) - X_0(s) \|_H \mathrm{d}s + \| X_1(T) - X_0(T) \|_H \right ] \\
        &\leq C (T-t+1) \mathbb{E} \left [ \sup_{s\in [t,T]} \| X_1(s) - X_0(s) \|_H \right ] 
        \leq C_1 \mathrm{e}^{C_2(T-t)} \| x_1-x_0 \|_H
    \end{split}
    \end{equation*}
   for some $C_1,C_2\geq 0$.
   \end{proof}
We point out that if in addition Assumption \ref{assumptionA} is satisfied, using \cite[Lemma 3.23]{fabbri2017} we also have that $V(t,\cdot)$ is Lipschitz continuous with respect to the $\|\cdot\|_{-1}$ norm for every $0\leq t<T$, however the Lipschitz constant blows up as $t$ approaches $T$.
\subsection{Semiconcavity}\label{semiconcavity}

\begin{assumption}\label{bsigmafrechetfirstvariable}
\begin{enumerate}[label=(\roman*)]
    \item Let $b:H\times\Lambda \to H$ be Fr\'echet differentiable in the first variable and let there be a constant $C>0$ such that
    \begin{equation*}
        \| D_xb(x,a) - D_xb(x^{\prime},a) \|_{L(H)} \leq C \|x-x^{\prime}\|_H
    \end{equation*}
    for all $x,x^{\prime}\in H$ and $a\in \Lambda_0$.
    \item Let $\sigma:H\times\Lambda \to L_2(\Xi,H)$ be Fr\'echet differentiable in the first variable and let there be a constant $C>0$ such that
    \begin{equation*}
        \| D_x\sigma(x,a) - D_x\sigma(x^{\prime},a) \|_{L(H,L_2(\Xi,H))} \leq C \|x-x^{\prime}\|_H
    \end{equation*}
    for all $x,x^{\prime}\in H$ and $a\in \Lambda_0$.
\end{enumerate}
\end{assumption}

\begin{assumption}\label{bsigmafrechet}
	\begin{enumerate}[label=(\roman*)]
		\item Let $b:H\times\Lambda\to H$ be Fr\'echet differentiable and let there be a constant $C>0$ such that
		\begin{equation*}
			\| D_xb(x,a) - D_xb(x^{\prime},a^{\prime})\|_{L(H)} + \| D_ab(x,a) - D_ab(x^{\prime},a^{\prime}) \|_{L(\Lambda,H)} \leq C (\|x-x^{\prime}\|_H + \|a-a^{\prime}\|_{\Lambda})
		\end{equation*}
		for all $x,x^{\prime}\in H$ and $a,a^{\prime}\in \Lambda_0$.
		\item Let $\sigma:H\to L_2(\Xi,H)$ be Fr\'echet differentiable and let there be a constant $C>0$ such that
		\begin{equation*}
			\| D\sigma(x) - D\sigma(x^{\prime}) \|_{L(H,L_2(\Xi,H))} \leq C \|x-x^{\prime}\|_H
		\end{equation*}
		for all $x,x^{\prime}\in H$.
	\end{enumerate}
\end{assumption}

Recall the definition of $X_0$ and $X_1$ given in equation \eqref{x1x0differentcontrols}. Furthermore, for $\lambda\in [0,1]$ we introduce
\begin{equation}\label{lambdadefinition}
	\begin{cases}
		a_{\lambda}(s) = \lambda a_1(s) + (1-\lambda) a_0(s)\\
		x_{\lambda} = \lambda x_1 +(1-\lambda)x_0\\
		X_{\lambda}(s) = X(s;t,x_{\lambda},a_{\lambda}(\cdot))\\
		X^{\lambda}(s) = \lambda X_1(s) +(1-\lambda)X_0(s).  
	\end{cases}
\end{equation}

\begin{lemma}\label{estimatexlambdasamecontrol}
\begin{enumerate}[label=(\roman*)]
	\item Let Assumptions \ref{bsigmalipschitzfirstvariable} and \ref{bsigmafrechetfirstvariable} be satisfied. Let $a_0(\cdot) = a_1(\cdot) = a(\cdot)\in \mathcal{U}_t$. There are constants $C_1,C_2\geq 0$ independent of $T$, such that
	\begin{equation*}
		\mathbb{E} \left [ \sup_{s\in [t,T]} \| X^{\lambda}(s) - X_{\lambda}(s) \|_H \right ] \leq C_1 \mathrm{e}^{C_2(T-t)} \lambda (1-\lambda) \|x_1-x_0\|_H^2
	\end{equation*}
    for all $\lambda \in [0,1]$, $x_0,x_1\in H$ and $a(\cdot)\in \mathcal{U}_t$.
	\item Let Assumptions \ref{bsigmalipschitz} and \ref{bsigmafrechet} be satisfied. There are constants $C_1,C_2\geq 0$, such that
	\begin{equation}\label{eq:xlambda}
			\mathbb{E} \left [ \sup_{s\in [t,T]} \| X^{\lambda}(s) - X_{\lambda}(s) \|_H \right ] \leq C_1 \mathrm{e}^{C_2(T-t)} \lambda (1-\lambda) \left ( \|x_1-x_0\|_H^2 + \mathbb{E} \left [ \int_t^T \|a_1(s) - a_0(s) \|_{\Lambda}^2 \mathrm{d}s \right ] \right )
	\end{equation}
    for all $\lambda \in [0,1]$, $x_0,x_1\in H$ and $a_0(\cdot),a_1(\cdot) \in \mathcal{U}_t$.
\end{enumerate}
\end{lemma}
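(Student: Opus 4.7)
The plan is to run essentially the same Itô-formula-plus-Grönwall scheme as in the proof of Lemma \ref{estimatex1x0one}, but now exploit the $C^{1,1}$ regularity of $b$ and $\sigma$ (Assumptions \ref{bsigmafrechetfirstvariable} or \ref{bsigmafrechet}) in place of only Lipschitz continuity, so as to extract the quadratic factor $\lambda(1-\lambda)$. Setting $Y(s) := X^{\lambda}(s) - X_{\lambda}(s)$ and subtracting the SDE satisfied by $X_{\lambda}$ from the one satisfied by the convex combination $X^{\lambda}$, one sees that $Y$ solves an $H$-valued SDE with $Y(t)=0$, unbounded drift $AY$, and source terms $\Delta b(s) := \lambda b(X_1,a_1) + (1-\lambda) b(X_0,a_0) - b(X_{\lambda},a_{\lambda})$ in the drift (and the analogous $\Delta\sigma(s)$ in the diffusion). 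I would split
\begin{equation*}
    \Delta b = \bigl[\lambda b(X_1,a_1)+(1-\lambda)b(X_0,a_0) - b(X^{\lambda},a_{\lambda})\bigr] + \bigl[b(X^{\lambda},a_{\lambda}) - b(X_{\lambda},a_{\lambda})\bigr]
\end{equation*}
(and analogously for $\Delta\sigma$). The second bracket is bounded by $C\|Y(s)\|_H$ via Lipschitz continuity, while the first bracket is controlled by the elementary $C^{1,1}$ Taylor estimate: if $F$ is a map between Banach spaces with $L$-Lipschitz Fréchet derivative, then $\|\lambda F(z_1)+(1-\lambda)F(z_0)-F(\lambda z_1+(1-\lambda)z_0)\| \leq \tfrac{L}{2}\lambda(1-\lambda)\|z_1-z_0\|^2$; this follows by writing the left-hand side as $\int_0^1 \phi(\mu)[DF(z_\mu)-DF(z_\lambda)](z_1-z_0)\,\mathrm{d}\mu$ with the mean-zero weight $\phi(\mu) := \lambda\mathbf{1}_{\{\mu>\lambda\}}-(1-\lambda)\mathbf{1}_{\{\mu<\lambda\}}$ and $z_\mu := \mu z_1+(1-\mu)z_0$. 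Applying this to $b$ and $\sigma$ --- in case (i) viewed as functions of $x$ only, in case (ii) viewed as functions of $(x,a)\in H\times\Lambda$ jointly, so that Assumption \ref{bsigmafrechet} gives the joint Lipschitz derivative --- bounds the first bracket by $C\lambda(1-\lambda)\Theta(s)$, where $\Theta(s) := \|X_1-X_0\|_H^2$ in case (i) and $\Theta(s) := \|X_1-X_0\|_H^2+\|a_1-a_0\|_\Lambda^2$ in case (ii).

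I would then apply the Itô formula from \cite[Proposition 1.166]{fabbri2017} to $\|Y(s)\|_H^2$ exactly as in the proof of Lemma \ref{estimatex1x0one}: dissipativity of $A$ absorbs $\langle AY,Y\rangle_H$; Young's inequality converts the drift cross terms and the Hilbert--Schmidt squared norm of the diffusion into a majorant of the form $C\|Y\|_H^2 + C\lambda^2(1-\lambda)^2\Theta^2$; and the Burkholder--Davis--Gundy inequality (combined with Young to absorb $\tfrac14\mathbb{E}\sup\|Y\|_H^2$ back into the left-hand side) handles the supremum of the stochastic integral. Grönwall's inequality then yields
\begin{equation*}
    \mathbb{E}\!\left[\sup_{s\in[t,T]}\|Y(s)\|_H^2\right] \leq C_1\mathrm{e}^{C_2(T-t)}\, \lambda^2(1-\lambda)^2\, \mathbb{E}\!\int_t^T \Theta(r)^2\,\mathrm{d}r.
\end{equation*}
The stated $L^1(\Omega)$ bound would then follow by Jensen's inequality, provided $\mathbb{E}\int_t^T\Theta(r)^2\,\mathrm{d}r$ is controlled by the square of $\|x_1-x_0\|_H^2$ (in case (i)) or of $\|x_1-x_0\|_H^2+\mathbb{E}\!\int\|a_1-a_0\|_\Lambda^2\,\mathrm{d}r$ (in case (ii)). The latter requires a higher-moment analog of Lemma \ref{estimatex1x0one}, obtained by repeating the same Itô argument with Proposition 1.166 applied to $\|X_1-X_0\|_H^{2q}$ for an appropriate $q\geq 2$.

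The main technical obstacle I anticipate is precisely this last moment-matching step: the $C^{1,1}$ Taylor remainder is genuinely quadratic in the data, so the Grönwall estimate produced by the Itô computation naturally lives in $L^2(\Omega)$, whereas the target inequality asks for an $L^1(\Omega)$ bound with merely $L^2$-integrability of the control difference on the right-hand side. Closing the loop therefore forces the use of higher-moment estimates on $X_1-X_0$ and (in case (ii)) on $a_1-a_0$, which in turn implicitly requires the integrability exponent $p$ in \eqref{eq:integr} to be taken large enough for these estimates to be available. Granted such higher-moment analogues of Lemma \ref{estimatex1x0one} (which follow by routine adaptations of its proof), the argument closes and gives the asserted bounds.
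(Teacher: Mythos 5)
Your decomposition of $\Delta b$, $\Delta\sigma$ and the $C^{1,1}$ Taylor estimate for the convexity defect are exactly the paper's (its displays \eqref{estimateb1}--\eqref{estimatesigma}), and for part (i) your scheme does close, since the fourth-moment analogue of Lemma \ref{estimatex1x0one}(i) is available by the routine adaptation you describe. In part (ii), however, there is a genuine gap at the Gr\"onwall stage, and you have diagnosed it only partially. Running It\^o--Gr\"onwall on $\mathbb{E}[\sup_s\|Y(s)\|_H^2]$ produces a right-hand side containing $\lambda^2(1-\lambda)^2$ times expectations of $\bigl(\int_t^T\|a_1-a_0\|_\Lambda^2\,\mathrm{d}s\bigr)^2$ and $\int_t^T\|a_1-a_0\|_\Lambda^4\,\mathrm{d}s$ (both from the drift cross term and from the quadratic variation), so after Jensen you would obtain at best $\lambda(1-\lambda)\bigl(\mathbb{E}\int_t^T\|a_1-a_0\|_\Lambda^4\,\mathrm{d}s\bigr)^{1/2}$ on the right of \eqref{eq:xlambda}. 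This is not the claimed quantity $\lambda(1-\lambda)\,\mathbb{E}\int_t^T\|a_1-a_0\|_\Lambda^2\,\mathrm{d}s$: no choice of the exponent $p$ in \eqref{eq:integr} makes an $L^4(\Omega\times[t,T])$ norm of the control difference controlled by its $L^2$ norm, so ``taking $p$ large enough'' does not close the argument. The distinction matters downstream: in the proof of Theorem \ref{convexity1} the term $C_1\mathrm{e}^{C_2(T-t)}\lambda(1-\lambda)\mathbb{E}\int\|a_1^\varepsilon-a_0^\varepsilon\|_\Lambda^2\,\mathrm{d}s$ must be absorbed by the strong convexity term $\nu\lambda(1-\lambda)\mathbb{E}\int\|a_1^\varepsilon-a_0^\varepsilon\|_\Lambda^2\,\mathrm{d}s$, which is impossible if an $L^4$-type quantity appears instead.

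The paper avoids this by never squaring: it takes the square root of the pathwise inequality for $\|Y(s)\|_H^2$ obtained from \cite[Proposition 1.166]{fabbri2017} \emph{before} taking expectations, estimates the stochastic integrals via the Burkholder--Davis--Gundy inequality applied to the exponent $1/2$ (so the quadratic variation enters to the power $1/4$ and Young's inequality leaves only $\bigl(\int\|\Delta\sigma\|^2\bigr)^{1/2}$, which is linear in $\sup\|X_1-X_0\|_H^2$), and runs Gr\"onwall directly on $s\mapsto\mathbb{E}[\sup_{r\in[t,s]}\|Y(r)\|_H]$. In this way only $\int\|a_1-a_0\|_\Lambda^2$ and second moments of $\sup\|X_1-X_0\|_H$ ever appear, and Lemma \ref{estimatex1x0one}(ii) suffices with no extra integrability of the controls. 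If you replace your $L^2(\Omega)$ Gr\"onwall step by this first-moment version, the rest of your argument goes through.
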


\begin{proof}
We are only going to prove part (ii); the proof of part (i) follows along the same lines.

For $\theta\in [0,1]$, we define
\begin{equation*}
	\begin{cases}
		\bar X_0(\theta) = X^{\lambda}(s) + \theta \lambda (X_0(s) -X_1(s))\\
		\bar X_1(\theta) = X^{\lambda}(s) + \theta (1-\lambda)(X_1(s) -X_0(s))
	\end{cases}
\end{equation*}
and
\begin{equation*}
	\begin{cases}
		\bar a_0(\theta) = a_{\lambda}(s) + \theta \lambda (a_0(s) -a_1(s))\\
		\bar a_1(\theta) = a_{\lambda}(s) + \theta (1-\lambda)(a_1(s) -a_0(s)).
	\end{cases}
\end{equation*}
Note that
\begin{equation}\label{estimateb1}
\begin{split}
	&\lambda b(X_1(s),a_1(s)) +(1-\lambda) b(X_0(s),a_0(s)) - b(X^{\lambda}(s),a_{\lambda}(s))\\
	&= \lambda (1-\lambda) \int_0^1 ( D_x b(\bar X_1(\theta), \bar a_1(\theta)) - D_x b(\bar X_0(\theta), \bar a_0(\theta))) (X_1(s) - X_0(s)) \mathrm{d}\theta\\
	&\quad + \lambda (1-\lambda) \int_0^1 ( D_a b(\bar X_1(\theta), \bar a_1(\theta)) - D_a b(\bar X_0(\theta), \bar a_0(\theta))) (a_1(s) - a_0(s)) \mathrm{d}\theta.
\end{split}
\end{equation}
Therefore, using Assumption \ref{bsigmafrechet}, we obtain
\begin{equation}\label{estimateb}
	\begin{split}
		&\| \lambda b(X_1(s),a_1(s)) +(1-\lambda) b(X_0(s),a_0(s)) - b(X^{\lambda}(s),a_{\lambda}(s)) \|_{H}\\
		&\leq C \lambda (1-\lambda) \left ( \|X_1(s) - X_0(s)\|_H^2 + \|a_1(s) -a_0(s)\|_{\Lambda}^2 \right ).
	\end{split}
\end{equation}
Since $\sigma$ is independent of the control, we obtain similarly
\begin{equation}\label{estimatesigma}
	\| \lambda \sigma(X_1(s)) +(1-\lambda) \sigma(X_0(s)) - \sigma(X^{\lambda}(s)) \|_{L_2(\Xi,H)} \leq C \lambda (1-\lambda) \|X_1(s) - X_0(s)\|_H^2.
\end{equation}
Applying \cite[Proposition 1.166]{fabbri2017}, we obtain
\begin{equation}\label{itosformula4}
	\begin{split}
		&\| X^{\lambda}(s) - X_{\lambda}(s) \|_H^2\\
		&\leq 2 \int_t^s \langle \lambda b(X_1(r),a_1(r)) + (1-\lambda) b(X_0(r),a_0(r)) - b(X^{\lambda}(r),a_{\lambda}(r)) , X^{\lambda}(r) - X_{\lambda}(r) \rangle_H \mathrm{d}r\\
        &\quad + 2 \int_t^s \langle b(X^{\lambda}(r),a_{\lambda}(r)) - b(X_{\lambda}(r),a_{\lambda}(r)),X^{\lambda}(r) - X_{\lambda}(r) \rangle_H \mathrm{d}r\\
		&\quad + 2 \int_t^s \| \lambda \sigma(X_1(r)) + (1-\lambda) \sigma(X_0(r)) - \sigma(X^{\lambda}(r)) \|_{L_2(\Xi,H)}^2 \mathrm{d}r\\
        &\quad + 2 \int_t^s \| \sigma(X^{\lambda}(r)) - \sigma(X_{\lambda}(r)) \|_{L_2(\Xi,H)}^2 \mathrm{d}r\\
		&\quad + 2 \int_t^s \Big \langle X^{\lambda}(r) - X_{\lambda}(r), ( \lambda \sigma(X_1(r)) + (1-\lambda) \sigma(X_0(r)) - \sigma(X^{\lambda}(r)) ) \mathrm{d}W(r)\Big \rangle_H\\
        &\quad +2 \int_t^s \Big \langle X^{\lambda}(r) - X_{\lambda}(r), ( \sigma(X^{\lambda}(r)) - \sigma(X_{\lambda}(r)) ) \mathrm{d}W(r)\Big \rangle_H.
	\end{split}
\end{equation}
Now, first note that for any $t<T^{\prime}\leq T$,
\begin{equation}\label{estimatingb}
\begin{split}
    & \sup_{s\in [t,T^{\prime}]} \left | \int_t^s \langle \lambda b(X_1(r),a_1(r)) + (1-\lambda) b(X_0(r),a_0(r)) - b(X^{\lambda}(r),a_{\lambda}(r)) , X^{\lambda}(r) - X_{\lambda}(r) \rangle_H \mathrm{d}r \right | \\
    &\leq \int_t^{T^{\prime}} \| \lambda b(X_1(s),a_1(s)) + (1-\lambda) b(X_0(s),a_0(s)) - b(X^{\lambda}(s),a_{\lambda}(s)) \|_H \| X^{\lambda}(s) - X_{\lambda}(s) \|_H \mathrm{d}s\\
    &\leq \frac{1}{16} \sup_{s\in [t,T^{\prime}]} \| X^{\lambda}(s) - X_{\lambda}(s) \|^2_H\\
    &\quad + C \left ( \int_t^{T^{\prime}} \| \lambda b(X_1(s),a_1(s)) + (1-\lambda) b(X_0(s),a_0(s)) - b(X^{\lambda}(s),a_{\lambda}(s)) \|_{H} \mathrm{d}s \right )^2.
\end{split}
\end{equation}
Due to the Lipschitz continuity of $b$ and $\sigma$, we have
\begin{equation*}
\begin{split}
    &\sup_{s\in [t,T^{\prime}]} \left | \int_t^s \langle b(X^{\lambda}(r),a_{\lambda}(r)) - b(X_{\lambda}(r),a_{\lambda}(r)),X^{\lambda}(r) - X_{\lambda}(r) \rangle_H \mathrm{d}r \right |\\
    &\leq \frac{1}{16} \sup_{s\in [t,T^{\prime}]} \| X^{\lambda}(s) - X_{\lambda}(s) \|_H^2 + C \left ( \int_t^{T^{\prime}} \| X^{\lambda}(s) - X_{\lambda}(s) \|_H \mathrm{d}s \right )^2
\end{split}
\end{equation*}
and
\begin{equation*}
    \sup_{s\in [t,T^{\prime}]} \int_t^s \| \sigma(X^{\lambda}(r)) - \sigma(X_{\lambda}(r)) \|_{L_2(\Xi,H)}^2 \mathrm{d}r\leq \frac{1}{16} \sup_{s\in [t,T^{\prime}]} \| X^{\lambda}(s) - X_{\lambda}(s) \|_H^2 + C \left ( \int_t^{T^{\prime}} \| X^{\lambda}(s) - X_{\lambda}(s) \|_H \mathrm{d}s \right )^2.
\end{equation*}
By Burkholder--Davis--Gundy inequality,
\begin{equation*}
\begin{split}
    &\mathbb{E} \left [ \sup_{s\in [t,T^{\prime}]} \left | \int_t^s \left \langle X^{\lambda}(r) - X_{\lambda}(r), ( \lambda \sigma(X_1(r)) + (1-\lambda) \sigma(X_0(r)) - \sigma(X^{\lambda}(r))) \mathrm{d}W(r) \right \rangle_H \right |^{\frac12} \right ]\\
    &\leq C \mathbb{E} \left [ \left ( \int_t^{T^{\prime}} \| X^{\lambda}(r) - X_{\lambda}(r) \|_H^2 \| \lambda \sigma(X_1(r)) + (1-\lambda) \sigma(X_0(r)) - \sigma(X^{\lambda}(r)) \|_{L_2(\Xi,H)}^2 \mathrm{d}r \right )^{\frac14} \right ]\\
    &\leq \frac{1}{16} \mathbb{E} \left [ \sup_{s\in [t,T^{\prime}]} \| X^{\lambda}(s) - X_{\lambda}(s) \|_H \right ]\\
    &\quad + C \mathbb{E} \left [ \left ( \int_t^{T^{\prime}} \| \lambda \sigma(X_1(s)) + (1-\lambda) \sigma(X_0(s)) - \sigma(X^{\lambda}(s)) \|_{L_2(\Xi,H)}^2 \mathrm{d}s \right )^{\frac12} \right ],
\end{split}
\end{equation*}
as well as
\begin{equation*}
\begin{split}
    &\mathbb{E} \left [ \sup_{s\in [t,T^{\prime}]} \left | \int_t^s \Big \langle X^{\lambda}(r) - X_{\lambda}(r), ( \sigma(X^{\lambda}(r)) - \sigma(X_{\lambda}(r)) ) \mathrm{d}W(r)\Big \rangle_H \right |^{\frac12} \right ]\\
    &\leq C \mathbb{E} \left [ \left ( \int_t^{T^{\prime}} \| X^{\lambda}(s) - X_{\lambda}(s) \|_H^2 \| \sigma(X^{\lambda}(s)) - \sigma(X_{\lambda}(s)) \|_{L_2(\Xi,H)}^2 \mathrm{d}s \right )^{\frac14} \right ]\\
    &\leq \frac{1}{16} \mathbb{E} \left [ \sup_{s\in [t,T^{\prime}]} \| X^{\lambda}(s) - X_{\lambda}(s) \|_H \right ] + C \mathbb{E} \left [ \int_t^{T^{\prime}} \| X^{\lambda}(s) - X_{\lambda}(s) \|_{H} \mathrm{d}s \right ].
\end{split}
\end{equation*}
Thus, taking the absolute value and square root of both sides of inequality \eqref{itosformula4}, taking the supremum over $[t,T^{\prime}]$, taking the expectation and using the previous five inequalities as well as \eqref{estimateb} and \eqref{estimatesigma}, we obtain
\begin{equation*}
\begin{split}
	\mathbb{E} \left [ \sup_{s\in [t,T^{\prime}]} \left \|X^{\lambda}(s) - X_{\lambda}(s)\right \|_H \right ] &\leq C \mathbb{E} \left [ \int_t^{T^{\prime}} \sup_{r\in [t,s]} \| X^{\lambda}(r) - X_{\lambda}(r) \|_H \mathrm{d}s \right ]\\
    &\quad + C \lambda (1-\lambda) \mathbb{E} \left [ (T^{\prime}-t)\sup_{s\in [t,T^{\prime}]} \|X_1(s) - X_0(s)\|_H^2 + \int_t^{T^{\prime}} \|a_1(s) -a_0(s)\|_{\Lambda}^2 \mathrm{d}s  \right ].
\end{split}
\end{equation*}
Applying Lemma \ref{estimatex1x0one}(ii) and Gr\"onwall's inequality concludes the proof.
\end{proof}

\begin{assumption}\label{lgsemiconcave}
    \begin{enumerate}[label=(\roman*)]
        \item Let $l(\cdot,a)$ be semiconcave uniformly in $a\in \Lambda_0$.
        \item Let $g$ be semiconcave.
    \end{enumerate}
\end{assumption}

\begin{theorem}\label{th:semiconc}
    Let Assumptions \ref{bsigmalipschitzfirstvariable}, \ref{lglipschitzfirstvariable}, \ref{bsigmafrechetfirstvariable} and \ref{lgsemiconcave} be satisfied. Then, for every $t\in[0,T]$, the function $V(t,\cdot)$ is semiconcave with semiconcavity constant $C_1 \mathrm{e}^{C_2(T-t)}$ for some $C_1,C_2\geq 0$ independent of $t,T$.
\end{theorem}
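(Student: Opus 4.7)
The plan is to adapt the standard finite-dimensional semiconcavity argument (as in \cite{yong1999}) to the infinite-dimensional setting, leveraging the two estimates already established in Lemmas \ref{estimatex1x0one}(i) and \ref{estimatexlambdasamecontrol}(i). Fix $t\in[0,T]$, $x_0,x_1\in H$, $\lambda\in[0,1]$, and set $x_\lambda=\lambda x_1+(1-\lambda)x_0$. For $\varepsilon>0$, I would pick an $\varepsilon$-optimal control $a(\cdot)\in\mathcal{U}_t$ for $V(t,x_\lambda)$, i.e., $J(t,x_\lambda;a(\cdot))\leq V(t,x_\lambda)+\varepsilon$, and use the \emph{same} control with the three initial conditions $x_0,x_1,x_\lambda$, obtaining trajectories $X_0,X_1,X_\lambda$. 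Setting $X^\lambda(s)=\lambda X_1(s)+(1-\lambda)X_0(s)$ and exploiting the inequalities $V(t,x_i)\leq J(t,x_i;a(\cdot))$ for $i=0,1$, it suffices to bound
\begin{equation*}
\lambda J(t,x_1;a(\cdot))+(1-\lambda)J(t,x_0;a(\cdot))-J(t,x_\lambda;a(\cdot)).
\end{equation*}

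The next step is to decompose the pointwise running-cost increment by inserting $\pm l(X^\lambda(s),a(s))$:
\begin{equation*}
\lambda l(X_1,a)+(1-\lambda)l(X_0,a)-l(X_\lambda,a)=\bigl[\lambda l(X_1,a)+(1-\lambda)l(X_0,a)-l(X^\lambda,a)\bigr]+\bigl[l(X^\lambda,a)-l(X_\lambda,a)\bigr],
\end{equation*}
and analogously for the terminal cost with $X_1(T),X_0(T),X^\lambda(T),X_\lambda(T)$. The first bracket is controlled by Assumption \ref{lgsemiconcave}, yielding $\leq C\lambda(1-\lambda)\|X_1(s)-X_0(s)\|_H^2$ uniformly in $a$; the second bracket is bounded by $C\|X^\lambda(s)-X_\lambda(s)\|_H$ via the Lipschitz continuity in Assumption \ref{lglipschitzfirstvariable}.

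Taking expectation, the first group of terms is handled by Lemma \ref{estimatex1x0one}(i), which gives $\mathbb{E}[\sup_{s\in[t,T]}\|X_1(s)-X_0(s)\|_H^2]\leq C_1 e^{C_2(T-t)}\|x_1-x_0\|_H^2$, and the second group is handled by Lemma \ref{estimatexlambdasamecontrol}(i), which yields $\mathbb{E}[\sup_{s\in[t,T]}\|X^\lambda(s)-X_\lambda(s)\|_H]\leq C_1 e^{C_2(T-t)}\lambda(1-\lambda)\|x_1-x_0\|_H^2$. Multiplying the running-cost estimates by the factor $(T-t+1)$ coming from integration in time plus the terminal piece, both contributions combine to the desired bound $C_1 e^{C_2(T-t)}\lambda(1-\lambda)\|x_1-x_0\|_H^2$. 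Letting $\varepsilon\to 0$ gives semiconcavity.

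I expect the only delicate point to be the bookkeeping of the dependence of constants on $T$: one must verify that the Lipschitz constants of $l$ and $g$ and the semiconcavity constant of $l$ enter multiplicatively and do not spoil the uniform exponential form $C_1 e^{C_2(T-t)}$. The argument itself contains no genuinely new infinite-dimensional difficulty because the two preparatory lemmas have already absorbed the subtleties caused by the unbounded generator $A$ and the cylindrical noise through the mild-It\^o formula \cite[Proposition 1.166]{fabbri2017}; the uniformity of the semiconcavity of $l(\cdot,a)$ in $a$ (Assumption \ref{lgsemiconcave}) is what makes the estimate independent of the choice of $\varepsilon$-optimal control.
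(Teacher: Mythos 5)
Your proposal is correct and follows essentially the same route as the paper: the identical decomposition via $\pm l(X^\lambda(s),a(s))$, with the semiconcavity of $l$ and $g$ plus Lemma \ref{estimatex1x0one}(i) handling the first bracket and the Lipschitz continuity of $l$ and $g$ plus Lemma \ref{estimatexlambdasamecontrol}(i) handling the second. The only cosmetic difference is that the paper phrases the argument as showing $J(t,\cdot;a(\cdot))$ is semiconcave with a constant uniform in $a(\cdot)$ (so that $V$, as an infimum of uniformly semiconcave functions, is semiconcave), whereas you pick an $\varepsilon$-optimal control for $x_\lambda$ directly --- these are equivalent, and you correctly chose the $\varepsilon$-optimal control at the midpoint, which is the one place where a sign error could occur.
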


\begin{proof}
    The proof is analogous to the finite dimensional case, which can be found, e.g., in \cite[Chapter 4, Proposition 4.5]{yong1999}.
    
	It is sufficient to show that for every $a(\cdot)\in \mathcal{U}_t$, the function $J(t,\cdot;a(\cdot))$ is semiconcave with the semiconcavity constant independent of $a(\cdot)$. Let $X_0$ and $X_1$ be given by \eqref{x1x0differentcontrols} with $a_0(\cdot) = a_1(\cdot) = a(\cdot)$, and let $x_{\lambda}$, $X_{\lambda}$ and $X^{\lambda}$ be given by \eqref{lambdadefinition}. Note that $a_{\lambda}(\cdot) = a(\cdot)$ in this case. Then
    \begin{equation*}
    \begin{split}
        &\lambda J(t,x_1;a(\cdot)) + (1-\lambda) J(t,x_0;a(\cdot)) - J(t,x_{\lambda};a(\cdot))\\
        &= \mathbb{E} \left [ \int_t^T \lambda l(X_1(s),a(s)) + (1-\lambda) l(X_0(s),a(s)) - l(X_{\lambda}(s),a(s)) \mathrm{d}s \right ]\\
        &\quad + \mathbb{E} \left [ \lambda g(X_1(T)) + (1-\lambda) g(X_0(T)) - g(X_{\lambda}(T)) \right ].
    \end{split}
    \end{equation*}
    Due to the semiconcavity of $l(\cdot,a)$, we have
    \begin{equation*}
        \lambda l(X_1(s),a(s)) + (1-\lambda) l(X_0(s),a(s)) - l(X^{\lambda}(s),a(s)) \leq C \lambda(1-\lambda) \|X_1(s) - X_0(s) \|_H^2
    \end{equation*}
    and the corresponding inequality holds for $g$. Thus,
    \begin{equation*}
    \begin{split}
        &\lambda J(t,x_1;a(\cdot)) + (1-\lambda)J(t,x_0;a(\cdot)) - J(t,x_{\lambda};a(\cdot))\\
        &\leq C\lambda(1-\lambda) \mathbb{E} \left [ \int_t^T \| X_1(s) - X_0(s) \|_H^2 \mathrm{d}s \right ] + \mathbb{E} \left [ \int_t^T |l(X^{\lambda}(s),a(s)) - l(X_{\lambda}(s),a(s)) | \mathrm{d}s \right ]\\
        &\quad + C\lambda (1-\lambda) \mathbb{E} \left [ \| X_1(T) - X_0(T) \|_H^2 \right ] + \mathbb{E} \left [ | g(X^{\lambda}(T)) - g(X_{\lambda}(T)) | \right ].
    \end{split}
    \end{equation*}
    Using the Lipschitz continuity of $l$ and $g$ as well as Lemma \ref{estimatexlambdasamecontrol}(i), we obtain
    \begin{equation*}
        \lambda J(t,x_1;a(\cdot)) + (1-\lambda) J(t,x_0;a(\cdot)) - J(t,x_{\lambda};a(\cdot)) \leq C_1 \mathrm{e}^{C_2(T-t)} \lambda (1-\lambda) \|x_1 - x_0\|^2_H.
    \end{equation*}
   This proves the semiconcavity of $J(t,\cdot;a(\cdot))$ with the required constant.
\end{proof}

\subsection{Semiconvexity}\label{semiconvexity}

In this subsection, we derive semiconvexity of the value function under three different sets of assumptions.

\subsubsection{Case 1: Uniform Convexity of Running Cost in The Control Variable}

\begin{assumption}\label{lgsemiconvex}
	\begin{enumerate}[label=(\roman*)]
		\item There exist constants $C,\nu \geq 0$ such that the map
		\begin{equation*}
			H\times\Lambda_0 \ni (x,a)\mapsto l(x,a) + C\|x\|_H^2 - \nu \|a\|^2_{\Lambda}
		\end{equation*}
		is convex.
    \item Let $g:H\to\mathbb{R}$ be semiconvex.
	\end{enumerate}
\end{assumption}

\begin{theorem}\label{convexity1}
	Let Assumptions \ref{bsigmalipschitz}, \ref{lglipschitzfirstvariable}, \ref{bsigmafrechet} and \ref{lgsemiconvex} be satisfied. Then there exists a constant $\nu_0$ depending only on the data of the problem such that if $\nu\geq \nu_0$, then $V(t,\cdot)$ is semiconvex with constant $C_1\mathrm{e}^{C_2T}$.
\end{theorem}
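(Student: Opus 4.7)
The plan is to exploit the strong convexity of $l$ in $a$ (with coefficient $\nu$) to produce a nonnegative term of size $\nu\lambda(1-\lambda)\mathbb{E}\int_t^T\|a_1-a_0\|_\Lambda^2\mathrm{d}s$ that absorbs all errors arising from the nonlinearity of the state dynamics, provided $\nu$ is taken large enough. Fix $t\in[0,T]$, $x_0,x_1\in H$ and $\lambda\in[0,1]$, and write $x_\lambda=\lambda x_1+(1-\lambda)x_0$. For $\varepsilon>0$ I would pick $\varepsilon$-optimal controls $a_i(\cdot)\in\mathcal{U}_t$ for $V(t,x_i)$, $i=0,1$. Since $\Lambda_0$ is convex and the integrability condition \eqref{eq:integr} is preserved under convex combinations (Jensen), $a_\lambda(\cdot):=\lambda a_1(\cdot)+(1-\lambda)a_0(\cdot)\in\mathcal{U}_t$; using it as a suboptimal control at $x_\lambda$ yields
\begin{equation*}
\lambda V(t,x_1)+(1-\lambda)V(t,x_0)-V(t,x_\lambda)\ge \lambda J(t,x_1;a_1)+(1-\lambda)J(t,x_0;a_0)-J(t,x_\lambda;a_\lambda)-\varepsilon.
\end{equation*}

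Next, with the notation of \eqref{x1x0differentcontrols}--\eqref{lambdadefinition}, I would write $l(X_\lambda,a_\lambda)=l(X^\lambda,a_\lambda)+[l(X_\lambda,a_\lambda)-l(X^\lambda,a_\lambda)]$ and apply Assumption \ref{lgsemiconvex}(i) to the convex function $(x,a)\mapsto l(x,a)+C\|x\|_H^2-\nu\|a\|_\Lambda^2$ at the points $(X_i(s),a_i(s))$. Together with the standard identity $\lambda\|y_1\|^2+(1-\lambda)\|y_0\|^2-\|\lambda y_1+(1-\lambda)y_0\|^2=\lambda(1-\lambda)\|y_1-y_0\|^2$ used for both $y=X$ and $y=a$, this gives
\begin{equation*}
\lambda l(X_1,a_1)+(1-\lambda)l(X_0,a_0)-l(X^\lambda,a_\lambda)\ge -C\lambda(1-\lambda)\|X_1-X_0\|_H^2 + \nu\lambda(1-\lambda)\|a_1-a_0\|_\Lambda^2.
\end{equation*}
The residual differences $l(X^\lambda,a_\lambda)-l(X_\lambda,a_\lambda)$ and, at time $T$, $g(X^\lambda(T))-g(X_\lambda(T))$ are controlled by the Lipschitz continuity from Assumption \ref{lglipschitzfirstvariable}, while the semiconvexity of $g$ in Assumption \ref{lgsemiconvex}(ii) contributes the analogous $-C\lambda(1-\lambda)\|X_1(T)-X_0(T)\|_H^2$ term.

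Taking expectations and controlling the $\|X_1-X_0\|_H^2$ terms with Lemma \ref{estimatex1x0one}(ii) and the $\|X^\lambda-X_\lambda\|_H$ terms with Lemma \ref{estimatexlambdasamecontrol}(ii) — each of which produces a factor $\lambda(1-\lambda)(\|x_1-x_0\|_H^2+\mathbb{E}\int_t^T\|a_1-a_0\|_\Lambda^2\mathrm{d}s)$ multiplied by a constant of the form $C_1\mathrm{e}^{C_2T}$ — I expect to reach
\begin{equation*}
\lambda V(t,x_1)+(1-\lambda)V(t,x_0)-V(t,x_\lambda)\ge \lambda(1-\lambda)\Bigl[(\nu-C_*)\mathbb{E}\int_t^T\|a_1-a_0\|_\Lambda^2\mathrm{d}s - C_*\|x_1-x_0\|_H^2\Bigr]-\varepsilon,
\end{equation*}
with $C_*=C_1\mathrm{e}^{C_2T}$ depending only on the data. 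Setting $\nu_0:=C_*$, for $\nu\ge\nu_0$ the bracketed integral is nonnegative and, letting $\varepsilon\downarrow 0$, this is precisely the semiconvexity of $V(t,\cdot)$ with constant $C_*$.

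The main subtlety I anticipate lies in the asymmetry of the two state estimates: Lemma \ref{estimatex1x0one}(ii) controls $\|X_1-X_0\|_H^2$, whereas Lemma \ref{estimatexlambdasamecontrol}(ii) only controls $\mathbb{E}[\sup\|X^\lambda-X_\lambda\|_H]$ without a square. The argument therefore relies crucially on the fact that the Lipschitz error from $l$ and $g$ at $(X^\lambda,a_\lambda)$ versus $(X_\lambda,a_\lambda)$ is linear in $\|X^\lambda-X_\lambda\|_H$; passing through any quadratic bound would introduce $\lambda^2(1-\lambda)^2$ factors that could not be absorbed by a single $\nu_0$. A secondary point is that $\nu_0$ must be independent of the particular $\varepsilon$-optimal choices $a_0,a_1$; this is ensured by the fact that both the ``good'' $\nu$-term and the ``bad'' state-difference term are proportional to the same quantity $\mathbb{E}\int_t^T\|a_1-a_0\|_\Lambda^2\mathrm{d}s$, so one threshold works uniformly.
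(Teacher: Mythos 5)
Your proposal is correct and follows essentially the same route as the paper's proof: the same $\varepsilon$-optimal controls and convex combination $a_\lambda$, the same decomposition through $X^\lambda$, the same use of Assumption \ref{lgsemiconvex} to generate the $\nu\lambda(1-\lambda)\mathbb{E}\int_t^T\|a_1-a_0\|_\Lambda^2\,\mathrm{d}s$ term, and the same absorption of the error terms from Lemmas \ref{estimatex1x0one}(ii) and \ref{estimatexlambdasamecontrol}(ii) by choosing $\nu_0=C_1\mathrm{e}^{C_2T}$. The subtlety you flag about the linear (not quadratic) dependence on $\|X^\lambda-X_\lambda\|_H$ is exactly why the paper's Lemma \ref{estimatexlambdasamecontrol} is stated with a first power and a $\lambda(1-\lambda)$ factor on the right.
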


\begin{proof}
    Let $\varepsilon>0$, $x_0,x_1\in H$. Let $X_0(s),X_1(s),x_{\lambda},X^{\lambda}(s),X_{\lambda}(s)$ be as in \eqref{x1x0differentcontrols} and \eqref{lambdadefinition}, respectively, where $a_0(\cdot)=a_0^{\varepsilon}(\cdot), a_1(\cdot)=a_1^{\varepsilon}(\cdot)$, are chosen such that
    \begin{equation*}
        V(t,x_0)\geq J(t,x; a_0^{\varepsilon}(\cdot))-\varepsilon,\quad V(t,x_1)\geq J(t,x; a_1^{\varepsilon}(\cdot))-\varepsilon
    \end{equation*}
    and for $0\leq\lambda\leq 1$ set $a_\lambda^{\varepsilon}(\cdot)= \lambda a_1^{\varepsilon}(\cdot)+(1-\lambda)a_0^{\varepsilon}(\cdot)$. Then
    \begin{equation}\label{semiconvexityinequality}
    \begin{split}
        &\varepsilon+\lambda V(t,x_1)+(1-\lambda )V(t,x_0)-V(t,x_\lambda)\\
        &\geq \lambda J(t,x_1;a_1^{\varepsilon}(\cdot)) + (1-\lambda) J(t,x_0; a_0^{\varepsilon}(\cdot)) - J(t,x_{\lambda};a_{\lambda}^{\varepsilon}(\cdot))
        \\
        &= \mathbb{E} \left [ \int_t^T \lambda l(X_1(s),a_1^{\varepsilon}(s)) + (1-\lambda)l(X_0(s),a_0^{\varepsilon}(s)) - l(X_{\lambda}(s),a_{\lambda}^{\varepsilon}(s)) \mathrm{d}s \right ]\\
        &\quad + \mathbb{E} \left [ \lambda g(X_1(T)) + (1-\lambda)g(X_0(T)) - g(X_{\lambda}(T)) \right ]\\
        &= \mathbb{E} \left [ \int_t^T \lambda l(X_1(s),a_1^{\varepsilon}(s)) + (1-\lambda)l(X_0(s),a_0^{\varepsilon}(s)) - l(X^{\lambda}(s),a_{\lambda}^{\varepsilon}(s)) \mathrm{d}s \right ]\\
        &\quad + \mathbb{E} \left [ \int_t^T l(X^{\lambda}(s),a_{\lambda}^{\varepsilon}(s)) - l(X_{\lambda}(s),a_{\lambda}^{\varepsilon}(s)) \mathrm{d}s \right ]\\
        &\quad + \mathbb{E} \left [ \lambda g(X_1(T)) + (1-\lambda)g(X_0(T)) - g(X^{\lambda}(T)) + g(X^{\lambda}(T)) - g(X_{\lambda}(T)) \right ].
    \end{split}
    \end{equation}
    Due to Assumption \ref{lgsemiconvex}, we have
    \begin{equation*}
    \begin{split}
        &\lambda l(X_1(s),a_1^{\varepsilon}(s)) + (1-\lambda)l(X_0(s),a_0^{\varepsilon}(s)) - l(X^{\lambda}(s),a_{\lambda}^{\varepsilon}(s))\\
        &\geq - C\lambda(1-\lambda)\|X_1(s)-X_0(s)\|_H^2 + \nu \lambda (1-\lambda) \|a_1^{\varepsilon}(s) - a_0^{\varepsilon}(s) \|_{\Lambda}^2
    \end{split}
    \end{equation*}
    and
    \begin{equation*}
        \lambda g(X_1(T)) + (1-\lambda)g(X_0(T)) - g(X^{\lambda}(T)) \geq -C\lambda(1-\lambda)\|X_1(T)-X_0(T)\|_H^2.
    \end{equation*}
    Therefore, we derive from \eqref{semiconvexityinequality} using Assumption \ref{lglipschitzfirstvariable}, \eqref{eq:strongest} and \eqref{eq:xlambda}
    \begin{equation*}
    \begin{split}
        &\varepsilon+\lambda V(t,x_1)+(1-\lambda )V(t,x_0)-V(t,x_\lambda)\\
        &\geq - C_1\mathrm{e}^{C_2(T-t)} \lambda(1-\lambda) \left ( \|x_1-x_0\|_H^2 + \mathbb{E} \left [ \int_t^T \|a_1^{\varepsilon}(s) - a_0^{\varepsilon}(s) \|_{\Lambda}^2 \mathrm{d}s \right ] \right )\\
        &\quad + \nu \lambda (1-\lambda) \mathbb{E} \left [ \int_t^T \|a_1^{\varepsilon}(s) - a_0^{\varepsilon}(s) \|_{\Lambda}^2 \mathrm{d}s \right ] \geq - C_1\mathrm{e}^{C_2(T-t)} \lambda(1-\lambda) \|x_1-x_0\|_H^2
    \end{split}
    \end{equation*}
   if $\nu\geq \nu_0=C_1\mathrm{e}^{C_2T}$. This implies that the function $V(t,\cdot)$ is semiconvex.
\end{proof}

\subsubsection{Case 2: Linear State Equation and Convex Costs}

\begin{assumption}\label{blinearlgconvex}
	\begin{enumerate}[label=(\roman*)]
		\item Let $b:H\times\Lambda\to H$ and $\sigma: H \times \Lambda \to L_2(\Xi,H)$ be affine and continuous.
		\item Let $l:H\times\Lambda_0\to\mathbb{R}$ and $g:H\to\mathbb{R}$ be convex.
	\end{enumerate}
\end{assumption}

\begin{theorem}\label{convexity2}
	Let Assumptions \ref{lglipschitzfirstvariable} and  \ref{blinearlgconvex} be satisfied. Then, for every $t\in [0,T]$, the function $V(t,\cdot)$ is convex.
\end{theorem}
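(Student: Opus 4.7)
\medskip

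\textbf{Proof plan.} The key observation is that under Assumption \ref{blinearlgconvex}(i), the state equation \eqref{state} is affine in the pair $(x,a)$, so the solution map inherits an affine structure. Concretely, fix $t\in[0,T]$, take $x_0,x_1\in H$ and admissible controls $a_0(\cdot),a_1(\cdot)\in\mathcal{U}_t$, and define $x_\lambda$, $a_\lambda(\cdot)$, $X_0$, $X_1$, $X_\lambda$, $X^\lambda$ as in \eqref{x1x0differentcontrols}--\eqref{lambdadefinition}. I would first show
\begin{equation*}
X_\lambda(s) = X^\lambda(s) = \lambda X_1(s) + (1-\lambda)X_0(s) \qquad \mathbb{P}\text{-a.s., for all }s\in[t,T].
\end{equation*}
Since $b$ and $\sigma$ are affine in $(x,a)$, a direct computation gives $\lambda b(X_1,a_1)+(1-\lambda)b(X_0,a_0)=b(X^\lambda,a_\lambda)$ and $\lambda\sigma(X_1,a_1)+(1-\lambda)\sigma(X_0,a_0)=\sigma(X^\lambda,a_\lambda)$. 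Hence the process $X^\lambda(\cdot)$ satisfies the same mild/strong SDE as $X_\lambda(\cdot)$ with the same initial datum $x_\lambda$, and the identity follows from the uniqueness statement recalled before \eqref{xeq:mildsupest}.

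Next, for $\varepsilon>0$ pick $\varepsilon$-optimal controls $a_0^\varepsilon(\cdot), a_1^\varepsilon(\cdot)\in\mathcal{U}_t$ for the initial conditions $x_0, x_1$, i.e.
\begin{equation*}
V(t,x_i) \geq J(t,x_i;a_i^\varepsilon(\cdot))-\varepsilon,\qquad i=0,1,
\end{equation*}
and set $a_\lambda^\varepsilon(\cdot):=\lambda a_1^\varepsilon(\cdot)+(1-\lambda)a_0^\varepsilon(\cdot)$. Since $\Lambda_0$ is convex and $\|\cdot\|_\Lambda^p$ is convex, progressive measurability and the integrability condition \eqref{eq:integr} are preserved, so $a_\lambda^\varepsilon(\cdot)\in\mathcal{U}_t$. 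Writing $X_i^\varepsilon(s):=X(s;t,x_i,a_i^\varepsilon(\cdot))$, the previous step gives
\begin{equation*}
X(s;t,x_\lambda,a_\lambda^\varepsilon(\cdot)) = \lambda X_1^\varepsilon(s)+(1-\lambda)X_0^\varepsilon(s) \qquad \mathbb{P}\text{-a.s.}
\end{equation*}

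Then, invoking the convexity of $l$ on $H\times\Lambda_0$ and of $g$ on $H$ (Assumption \ref{blinearlgconvex}(ii)), pointwise in $(s,\omega)$,
\begin{equation*}
\begin{split}
V(t,x_\lambda) &\leq J(t,x_\lambda;a_\lambda^\varepsilon(\cdot)) \\
&= \mathbb{E}\left[\int_t^T l\bigl(\lambda X_1^\varepsilon(s)+(1-\lambda)X_0^\varepsilon(s),\,\lambda a_1^\varepsilon(s)+(1-\lambda)a_0^\varepsilon(s)\bigr)\,\mathrm{d}s + g\bigl(\lambda X_1^\varepsilon(T)+(1-\lambda)X_0^\varepsilon(T)\bigr)\right] \\
&\leq \lambda J(t,x_1;a_1^\varepsilon(\cdot)) + (1-\lambda)J(t,x_0;a_0^\varepsilon(\cdot)) \\
&\leq \lambda V(t,x_1) + (1-\lambda)V(t,x_0) + \varepsilon.
\end{split}
\end{equation*}
Letting $\varepsilon\downarrow 0$ yields convexity of $V(t,\cdot)$.

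\textbf{Expected difficulty.} There is essentially no analytic obstacle here: once the affinity of the solution map is justified via pathwise uniqueness for \eqref{state}, the remainder is a soft convex-combination argument. The only minor point requiring care is the admissibility of $a_\lambda^\varepsilon(\cdot)$, which reduces to convexity of $\Lambda_0$ together with the elementary bound $\|a_\lambda^\varepsilon\|_\Lambda^p\leq 2^{p-1}(\|a_0^\varepsilon\|_\Lambda^p+\|a_1^\varepsilon\|_\Lambda^p)$; Lipschitz assumptions on $b,\sigma$ and the semiconcavity/semiconvexity estimates from Lemmas \ref{estimatex1x0one}--\ref{estimatexlambdasamecontrol} are not needed, reflecting the fact that convexity of $V(t,\cdot)$ in the linear-convex setting is stronger than (and directly implies) semiconvexity with constant $0$.
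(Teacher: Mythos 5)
Your proposal is correct and follows essentially the same route as the paper: both arguments note that the affine structure of $b$ and $\sigma$ forces $X_\lambda(s)=X^\lambda(s)$, then combine $\varepsilon$-optimal controls for $x_0,x_1$ with the convexity of $l$ and $g$ to bound $V(t,x_\lambda)$ by the convex combination of the costs. Your additional remarks on admissibility of $a_\lambda^\varepsilon(\cdot)$ and on uniqueness of the state equation are correct housekeeping that the paper leaves implicit.
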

Note that in contrast to Theorem \ref{convexity1}, in Theorem \ref{convexity2} we do not require strong convexity in the control of the cost function $l$ of the type used in Assumption \ref{lgsemiconvex}(i). Also the full Assumption \ref{lglipschitzfirstvariable} is not necessary but we use it here since it guarantees that the value function is well defined.

\begin{proof}
    We use the same notation as in the proof of Theorem \ref{convexity1}. Note that due to the linearity of the state equation, we have $X^{\lambda}(s) = X_{\lambda}(s)$. Due to Assumption \ref{blinearlgconvex}, we have
    \begin{equation*}
    \begin{split}
        &\lambda J(t,x_1;a_1^{\varepsilon}(\cdot)) + (1-\lambda) J(t,x_0; a_0^{\varepsilon}(\cdot)) - J(t,x_{\lambda};a_{\lambda}^{\varepsilon}(\cdot))\\
        &= \mathbb{E} \left [ \int_t^T \lambda l(X_1(s),a_1^{\varepsilon}(s)) + (1-\lambda)l(X_0(s),a_0^{\varepsilon}(s)) - l(X_{\lambda}(s),a_{\lambda}^{\varepsilon}(s)) \mathrm{d}s \right ]\\
        &\quad + \mathbb{E} \left [ \lambda g(X_1(T)) + (1-\lambda)g(X_0(T)) - g(X_{\lambda}(T)) \right ]
        \geq 0,
    \end{split}
    \end{equation*}
    from which the convexity of $V(t,\cdot)$ follows.
\end{proof}

\subsubsection{Case 3: Comparison for the State Equation}\label{section:case3}

In this subsection we show the convexity of the value function by applying comparison principles for SPDEs. Throughout this section we set the Hilbert space $H=L^2(\mathcal{O})$, for some domain $\mathcal{O}\subset \mathbb{R}^d$, $d\in\mathbb{N}$. Moreover, for $x_1,x_2\in L^2(\mathcal{O})$, $x_1\leq x_2$ is understood in the almost everywhere sense. Finally, for $x \in L^2(\mathcal{O})$ we denote by $x_+$ its positive part, that is $x_+(\xi)=x(\xi)_+$, $\xi\in \mathcal{O}$, where for $a\in\mathbb{R}$, $a_+ := \max\{a,0\}$.

\paragraph{Stochastic Partial Differential Equations (Monotonicity)}

To state a comparison result we impose the following assumptions.
\begin{assumption}\label{comparisonprinciple}
    Let $\mathrm{e}^{sA}$ be positivity preserving for every $s\ge 0$, i.e., for $x\in H$ with $x\geq 0$ it holds $\mathrm{e}^{sA}x \geq 0$.
\end{assumption}

\begin{assumption}\label{comp:monotonicity}
\begin{enumerate}[label=(\roman*)]
\item Let $\bar b:\Omega\times[t,T]\times H\to H$ be $\textit{Prog}_{[t,T]}\otimes {\mathscr B}(H)/ {\mathscr B}(H)$ measurable (where $\textit{Prog}_{[t,T]} $is the $\sigma$-field of progressively measurable sets, see e.g., \cite[page 44]{RY} or \cite[bottom of page 23]{fabbri2017}) and ${\mathscr B}(H)$ is the Borel $\sigma$-field in $H$) 
and assume moreover that (suppressing as usual the first variable)
\begin{equation}\label{assumptionstar}
    \| [ \bar b(s,x_1) - \bar b(s,x_2) ]_+ \|_H \leq C \| [x_1-x_2]_+ \|_H
\end{equation}
for all $s\in [t,T]$ and $x_1,x_2\in H$.
\item Let $\bar{\sigma}:\Omega \times [t,T] \to L_2(\Xi,H)$ be $\textit{Prog}_{[t,T]}/ {\mathscr B}(L_2(\Xi,H))$ measurable.
\end{enumerate}
\end{assumption}
Under these assumptions, we have the following comparison result stated below in Theorem \ref{comparisonSPDEs1}. We emphasize that in Theorem \ref{comparisonSPDEs1} we do not assume that mild solutions $X_i$, $i=1,2$, of equation \eqref{eq:mildxi} are unique. They only have to satisfy the conditions imposed in the definition of mild solutions, see \cite[Definition 1.119]{fabbri2017}.
\begin{theorem}\label{comparisonSPDEs1}
    Let Assumptions \ref{comparisonprinciple} and \ref{comp:monotonicity} be satisfied and let $0\leq t<T$. Let $X_i$, $i=1,2$, be a mild solution of
    \begin{equation}\label{eq:mildxi}
    \begin{cases}
        \mathrm{d}X(s) = [AX(s) + \bar b(s,X(s))+f_i(s) ] \mathrm{d}s + \bar{\sigma}(s) \mathrm{d}W(s),\quad s\in [t,T]\\
        X(t)=x_i\in L^2(\mathcal{O}),
    \end{cases}
    \end{equation}
    where $f_i:\Omega\times [t,T] \to H$ are $\textit{Prog}_{[t,T]}/\mathscr{B}(H)$ measurable and $f_i \in L^1(t,T;H)$ $\mathbb{P}$--almost surely, $i=1,2$. Furthermore, let
    \begin{equation}\label{assumptioncomparison}
    \begin{cases}
        x_1 \geq x_2,\\
        f_1(s) \geq f_2(s).
    \end{cases}
    \end{equation}
    Then $X_1(s) \geq X_2(s)$ for all $s\in [t,T]$.
\end{theorem}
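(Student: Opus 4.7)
The plan is to exploit the crucial structural feature that $X_1$ and $X_2$ are driven by the same noise coefficient $\bar\sigma$ (no control-dependence here), so that the stochastic convolution cancels in the difference $Z(s) := X_2(s) - X_1(s)$. Subtracting the two mild formulations, I obtain, for almost every $\omega$ and every $s\in[t,T]$, the noiseless identity
\begin{equation*}
Z(s) = e^{(s-t)A}(x_2 - x_1) + \int_t^s e^{(s-r)A}\bigl[\bar b(r,X_2(r)) - \bar b(r,X_1(r))\bigr]\,\mathrm{d}r + \int_t^s e^{(s-r)A}\bigl[f_2(r) - f_1(r)\bigr]\,\mathrm{d}r.
\end{equation*}
By \eqref{assumptioncomparison}, the inputs $x_2 - x_1$ and $f_2(r) - f_1(r)$ are $\leq 0$ a.e.\ on $\mathcal{O}$, and since by Assumption \ref{comparisonprinciple} $e^{sA}$ is positivity (hence order) preserving, the first and third terms on the right-hand side are $\leq 0$ in $L^2(\mathcal{O})$, leaving
\begin{equation*}
Z(s) \leq \int_t^s e^{(s-r)A}\bigl[\bar b(r,X_2(r)) - \bar b(r,X_1(r))\bigr]\,\mathrm{d}r\qquad\text{a.e.\ on }\mathcal{O}.
\end{equation*}

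Next I would push the (monotone) positive-part operation $(\cdot)_+$ inside both the Bochner integral and the semigroup. This rests on two elementary inequalities holding a.e.\ on $\mathcal{O}$: by Fubini, $\bigl(\int_t^s h(r)\,\mathrm{d}r\bigr)_+ \leq \int_t^s h(r)_+\,\mathrm{d}r$; and from positivity preservation, writing $e^{sA}y_+ - e^{sA}y = e^{sA}y_- \geq 0$ yields $e^{sA}y \leq e^{sA}y_+$, whence, using monotonicity of $(\cdot)_+$, $[e^{sA}y]_+ \leq e^{sA}y_+$. Chaining these pointwise estimates gives
\begin{equation*}
Z(s)_+ \leq \int_t^s e^{(s-r)A}\bigl[\bar b(r,X_2(r)) - \bar b(r,X_1(r))\bigr]_+\,\mathrm{d}r.
\end{equation*}

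Taking the $H=L^2(\mathcal{O})$ norm, using the contractivity of $e^{sA}$ (which follows from dissipativity of $A$), and then the quasi-monotonicity estimate \eqref{assumptionstar}, I conclude
\begin{equation*}
\|Z(s)_+\|_H \leq \int_t^s \bigl\|\bigl[\bar b(r,X_2(r)) - \bar b(r,X_1(r))\bigr]_+\bigr\|_H\,\mathrm{d}r \leq C\int_t^s \|Z(r)_+\|_H\,\mathrm{d}r.
\end{equation*}
Since mild solutions have continuous $H$-valued paths, $r\mapsto \|Z(r)_+\|_H$ is continuous and bounded on $[t,T]$ almost surely; a pathwise Gr\"onwall argument then forces $\|Z(s)_+\|_H = 0$, i.e., $X_1(s)\geq X_2(s)$ for all $s\in[t,T]$, a.s. The main delicate point is the legitimacy of interchanging the pointwise operation $(\cdot)_+$ with the $L^2$-level Bochner integral and with the semigroup $e^{sA}$; both interchanges are justified by Fubini, by positivity preservation, and by understanding all inequalities $\xi$-a.e.\ on $\mathcal{O}$. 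Everything else is soft: the result does not rely on uniqueness of the mild solutions, only on the cancellation of the common stochastic convolution and the order properties of $e^{sA}$.
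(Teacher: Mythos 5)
Your proposal is correct and follows essentially the same route as the paper's proof: cancel the common stochastic convolution and the ordered initial data and forcing terms using positivity preservation of $\mathrm{e}^{sA}$, bound $[X_2(s)-X_1(s)]_+$ by $\int_t^s \mathrm{e}^{(s-r)A}[\bar b(r,X_2(r))-\bar b(r,X_1(r))]_+\,\mathrm{d}r$, then take $H$-norms, apply contractivity and the one-sided Lipschitz condition \eqref{assumptionstar}, and conclude by Gr\"onwall. Your version merely spells out the interchange of $(\cdot)_+$ with the semigroup and the Bochner integral in slightly more detail than the paper does.
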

\begin{proof}
Since $A$ generates a positivity preserving semigroup and by assumption \eqref{assumptioncomparison}, we have
\begin{equation*}
    X_2(s) - X_1(s)\leq \int_t^s \mathrm{e}^{(s-r)A} [ \bar b(r,X_2(r)) - \bar b(r,X_1(r)) ] \mathrm{d}r \leq \int_t^s \mathrm{e}^{(s-r)A} [ \bar b(r,X_2(r)) - \bar b(r,X_1(r)) ]_+ \mathrm{d}r
\end{equation*}
and since the right hand side is a non-negative function,
\begin{equation*}
    [ X_2(s) - X_1(s) ]_+ \leq \int_t^s \mathrm{e}^{(s-r)A} [ \bar b(r,X_2(r)) - \bar b(r,X_1(r)) ]_+ \mathrm{d}r.
\end{equation*}
Thus, by \eqref{assumptionstar}, 
\begin{equation*}
\begin{split}
    \left \| [ X_2(s) - X_1(s) ]_+ \right \|_H &\leq \int_t^s \left \| \mathrm{e}^{(s-r)A} [ \bar b(r,X_2(r)) - \bar b(r,X_1(r)) ]_+ \right \|_H \mathrm{d}r\leq C \int_t^s \left \| [ X_2(r) - X_1(r) ]_+ \right \|_H \mathrm{d}r.
\end{split}
\end{equation*}
Gr\"onwall's inequality now yields $X_2(s) \leq X_1(s)$.
\end{proof}

\begin{assumption}\label{comparisonprinciple2}
    \begin{enumerate}[label=(\roman*)]
        \item Let $l:H \times \Lambda_0\to \mathbb{R}$ be convex.
        \item Let $g:H \to \mathbb{R}$ be convex.
        \item Let $l(\cdot,a)$ and $g$ satisfy
        \begin{equation}\label{eq:lgdecreasing}
        \begin{cases}
            l(x_1,a) \geq l(x_2,a),\\
            g(x_1) \geq g(x_2),
        \end{cases}
        \end{equation}
        for all $x_1,x_2\in H$ such that $x_1\leq x_2$ and $a\in \Lambda_0$.
    \end{enumerate}
\end{assumption}

Using Theorem \ref{comparisonSPDEs1} we now obtain the following result.
\begin{theorem}\label{convexity3}
Let Assumptions \ref{bsigmalipschitzfirstvariable}, \ref{lglipschitzfirstvariable}, \ref{comparisonprinciple} and \ref{comparisonprinciple2} be satisfied. Assume in addition that $b$ satisfies
\begin{equation}\label{bconvexity}
    \lambda b(x_1,a_1)+(1-\lambda)b(x_0,a_0) \leq b(\lambda x_1+(1-\lambda)x_0,\lambda a_1+(1-\lambda)a_0)
\end{equation}
for all $x_0,x_1\in H, a_0,a_1\in \Lambda_0, \lambda\in[0,1]$, and that $\sigma$ is constant. Then $V(t,\cdot)$ is convex for $0\leq t\leq T$.
\end{theorem}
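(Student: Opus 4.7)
The plan is to mimic the structure of the proof of Theorem \ref{convexity2}, but to replace the linearity identity $X^\lambda=X_\lambda$ (which fails here) by the inequality $X_\lambda\geq X^\lambda$ obtained from the comparison principle for mild solutions, Theorem \ref{comparisonSPDEs1}. Fix $\varepsilon>0$ and $x_0,x_1\in H$, pick $\varepsilon$-optimal controls $a_i^{\varepsilon}(\cdot)\in\mathcal{U}_t$ with $V(t,x_i)\geq J(t,x_i;a_i^{\varepsilon}(\cdot))-\varepsilon$, and define $a_\lambda^{\varepsilon},x_\lambda,X_0,X_1,X^\lambda,X_\lambda$ as in \eqref{x1x0differentcontrols}--\eqref{lambdadefinition}.

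Since $\sigma$ is constant, the stochastic convolutions in the mild formulations of $X_\lambda$, $X_0$ and $X_1$ are identical, and taking the convex combination yields
\begin{equation*}
X^\lambda(s)=e^{(s-t)A}x_\lambda+\int_t^s e^{(s-r)A}\bigl[\lambda b(X_1(r),a_1^{\varepsilon}(r))+(1-\lambda)b(X_0(r),a_0^{\varepsilon}(r))\bigr]\mathrm{d}r+\int_t^s e^{(s-r)A}\sigma\,\mathrm{d}W(r).
\end{equation*}
Writing $f_2(r):=\lambda b(X_1(r),a_1^{\varepsilon}(r))+(1-\lambda)b(X_0(r),a_0^{\varepsilon}(r))-b(X^\lambda(r),a_\lambda^{\varepsilon}(r))$, condition \eqref{bconvexity} gives $f_2\leq 0$, and $X^\lambda$ is a mild solution of the equation with drift $b(\cdot,a_\lambda^{\varepsilon}(\cdot))$ and forcing $f_2$, starting from $x_\lambda$. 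On the other hand, $X_\lambda$ is a mild solution of the same equation with $f_1\equiv 0$ and initial datum $x_\lambda$. Applying Theorem \ref{comparisonSPDEs1} with $\bar b(s,x):=b(x,a_\lambda^{\varepsilon}(s))$ (which inherits measurability and the inequality \eqref{assumptionstar} from the structural properties of $b$) and $\bar\sigma\equiv\sigma$ then yields $X_\lambda(s)\geq X^\lambda(s)$ for every $s\in[t,T]$.

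Using the monotonicity condition \eqref{eq:lgdecreasing}, we obtain $l(X_\lambda(s),a_\lambda^{\varepsilon}(s))\leq l(X^\lambda(s),a_\lambda^{\varepsilon}(s))$ and $g(X_\lambda(T))\leq g(X^\lambda(T))$, whence
\begin{equation*}
V(t,x_\lambda)\leq J(t,x_\lambda;a_\lambda^{\varepsilon}(\cdot))\leq \mathbb{E}\left[\int_t^T l(X^\lambda(s),a_\lambda^{\varepsilon}(s))\,\mathrm{d}s+g(X^\lambda(T))\right].
\end{equation*}
Now exploit the (joint) convexity of $l$ and convexity of $g$ from Assumption \ref{comparisonprinciple2}: since $X^\lambda=\lambda X_1+(1-\lambda)X_0$ and $a_\lambda^{\varepsilon}=\lambda a_1^{\varepsilon}+(1-\lambda)a_0^{\varepsilon}$,
\begin{equation*}
l(X^\lambda(s),a_\lambda^{\varepsilon}(s))\leq \lambda l(X_1(s),a_1^{\varepsilon}(s))+(1-\lambda)l(X_0(s),a_0^{\varepsilon}(s)),\quad g(X^\lambda(T))\leq \lambda g(X_1(T))+(1-\lambda)g(X_0(T)).
\end{equation*}
Combining yields $V(t,x_\lambda)\leq \lambda J(t,x_1;a_1^{\varepsilon})+(1-\lambda)J(t,x_0;a_0^{\varepsilon})\leq \lambda V(t,x_1)+(1-\lambda)V(t,x_0)+\varepsilon$, and letting $\varepsilon\to 0$ gives convexity.

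The main obstacle is the comparison step: the monotonicity inequality $X_\lambda\geq X^\lambda$ requires $e^{sA}$ to preserve positivity (Assumption \ref{comparisonprinciple}) so that the negative-part estimate in Theorem \ref{comparisonSPDEs1} closes via Gr\"onwall, and one must check that the drift $\bar b(s,\cdot)=b(\cdot,a_\lambda^{\varepsilon}(s))$ indeed satisfies \eqref{assumptionstar}; this is the point where the structural assumptions on $b$ (Lipschitz continuity plus pointwise action inherited in the $L^2$-setting $H=L^2(\mathcal{O})$) are used, and where an alternative comparison result such as Theorem \ref{comparisonSPDEs} may have to be invoked if $\bar b$ fails \eqref{assumptionstar} but satisfies its variant hypotheses. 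Everything else reduces to the same convex-combination / $\varepsilon$-optimality scheme already used in Theorems \ref{convexity1}--\ref{convexity2}.
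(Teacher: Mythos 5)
Your proposal is correct and follows essentially the same route as the paper: write $X^\lambda$ as a mild solution of the equation with drift $b(\cdot,a^\varepsilon_\lambda(\cdot))$ and forcing $f_2\leq 0$ (by \eqref{bconvexity}), compare with $X_\lambda$ (forcing $f_1=0$) via Theorem \ref{comparisonSPDEs1} to get $X_\lambda\geq X^\lambda$, and then combine the monotonicity \eqref{eq:lgdecreasing} with the convexity of $l$ and $g$. The caveat you raise about verifying \eqref{assumptionstar} for $\bar b(s,x)=b(x,a^\varepsilon_\lambda(s))$ is a fair one, but the paper's own proof applies Theorem \ref{comparisonSPDEs1} at exactly the same point without further comment, so this is not a divergence from the paper's argument.
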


\begin{proof}
    We use the same notation as in the proof of Theorem \ref{convexity1}. First note that 
    \begin{equation*}
        \mathrm{d}X_{\lambda}(s) = [ AX_{\lambda}(s) + b(X_{\lambda}(s),a^{\varepsilon}_{\lambda}(s)) ] \mathrm{d}s + \sigma \mathrm{d}W(s)
    \end{equation*}
    and
    \begin{equation*}
    \begin{split}
        &\mathrm{d}X^{\lambda}(s)\\
        & = [ A X^{\lambda}(s) + \lambda b(X_1(s),a^{\varepsilon}_1(s)) + (1-\lambda) b(X_0(s),a^{\varepsilon}_0(s)) ]
        \mathrm{d}s + \sigma \mathrm{d}W(s)\\
       &= [ A X^{\lambda}(s) + b(X^{\lambda}(s),a^{\varepsilon}_{\lambda}(s))+\lambda b(X_1(s),a^{\varepsilon}_1(s)) + (1-\lambda) b(X_0(s),a^{\varepsilon}_0(s))-b(X^{\lambda}(s),a^{\varepsilon}_{\lambda}(s)) ]
        \mathrm{d}s + \sigma \mathrm{d}W(s).
    \end{split}
    \end{equation*}
    Due to \eqref{bconvexity}, we have
    \begin{equation*}
    \lambda b(X_1(s),a^{\varepsilon}_1(s)) + (1-\lambda) b(X_0(s),a^{\varepsilon}_0(s)) -b(X^{\lambda}(s),a^{\varepsilon}_{\lambda}(s)) \leq 0.
    \end{equation*}
    Thus, setting
    \begin{equation*}
   f_1(s)=0, \quad f_2(s) = \lambda b(X_1(s),a^{\varepsilon}_1(s)) + (1-\lambda) b(X_0(s),a^{\varepsilon}_0(s)) - b(X^{\lambda}(s),a^{\varepsilon}_{\lambda}(s)),
    \end{equation*}
  we have $f_1(s)\geq f_2(s)$. Therefore, we can apply Theorem \ref{comparisonSPDEs1} to obtain
\begin{equation}\label{comparisonresult}
      X_{\lambda}(s) \geq X^{\lambda}(s).
    \end{equation}    
    Now, since $l$ and $g$ are decreasing in the $x$-variable and convex, we have
    \begin{equation*}
    \begin{split}
        V(t,x_{\lambda})
        &\leq J(t,x_{\lambda};a^{\varepsilon}_{\lambda}(\cdot))
        =\mathbb{E} \left [ \int_t^T l(X_{\lambda}(s),a^{\varepsilon}_{\lambda}(s))\mathrm{d}s + g(X_{\lambda}(T)) \right ]\\
        &\leq \mathbb{E} \left [ \int_t^T l(X^{\lambda}(s),a^{\varepsilon}_{\lambda}(s))\mathrm{d}s + g(X^{\lambda}(T)) \right ]\\
        &\leq \mathbb{E} \left [ \int_t^T (\lambda l(X_1(s),a^{\varepsilon}_1(s)) + (1-\lambda) l(X_0(s),a^{\varepsilon}_0(s))) \mathrm{d}s + \lambda g(X_1(T)) + (1-\lambda) g(X_0(T)) \right ]\\
        &= \lambda J(t,x_1;a^{\varepsilon}_1(\cdot)) + (1-\lambda) J(t,x_0;a^{\varepsilon}_0(\cdot))\\
        &\leq \lambda V(t,x_1) + (1-\lambda) V(t,x_0) + \varepsilon,
    \end{split}
    \end{equation*}
    which concludes the proof.
\end{proof}

\begin{remark}
    Note that the conclusion of Theorem \ref{convexity3} still holds if the inequality in \eqref{bconvexity} is reversed (that is $b$ is ``convex'')  and \eqref{eq:lgdecreasing} is satisfied whenever $x_1\geq x_2$ (that is $l$ and $g$ are ``non-decreasing'').
\end{remark}

\paragraph{Stochastic Partial Differential Equations (Nemytskii Type)}

In this part, we apply Theorem \ref{comparisonSPDEs1} to derive a comparison result for SPDEs with Nemytskii type coefficients. Similar comparison results were proven for example in \cite{kotelenez1992,manthey1999,milian2002}. However, none of these results applies directly to our situation.

\begin{theorem}\label{comparisonSPDEs}
Let Assumption \ref{comparisonprinciple} hold. Let $\mathfrak{b}:\Omega\times[t,T]\times \mathbb{R} \to \mathbb{R}$ be $\textit{Prog}_{[t,T]}\otimes {\mathscr B}(\mathbb{R})/ {\mathscr B}(\mathbb{R})$ measurable (where ${\mathscr B}(\mathbb{R})$ is the Borel $\sigma$-field in $\mathbb{R}$) and let there be a constant $C_L$ such that (suppressing as usual the first variable)
    \begin{equation*}
        |\mathfrak{b}(s,\mathrm{x}_1) - \mathfrak{b}(s,\mathrm{x}_2) | \leq C_L |\mathrm{x}_1-\mathrm{x}_2|.
    \end{equation*}
    for all $s\in [t,T]$ and $\mathrm{x},\mathrm{x}_1,\mathrm{x}_2\in \mathbb{R}$. Furthermore, assume that $\mathfrak{b}(\cdot,0)\in L^1(t,T)$ $\mathbb{P}$--almost surely. Let $b$ denote the Nemytskii operator associated with $\mathfrak{b}$, that is $b(s,x)(\xi):=\mathfrak{b}(s,x(\xi))$ for $s\in[t,T],x\in H$ and $\xi\in\mathcal{O}$. Furthermore, let $f_1,f_2: \Omega\times [t,T] \to H$ be $\textit{Prog}_{[t,T]}/ {\mathscr B}(H)$ measurable with $f_1,f_2\in L^1(t,T;H)$ $\mathbb{P}$--almost surely and $\sigma:\Omega \times [t,T] \to L_2(\Xi,H)$ be $\textit{Prog}_{[t,T]}/ {\mathscr B}(L_2(\Xi,H))$ measurable with $\sigma \in L^2(t,T;L_2(\Xi,H))$ $\mathbb{P}$--almost surely. Let $X_i$ be a mild solution of
    \begin{equation*}
    \begin{cases}
        \mathrm{d}X(s) = [AX(s) + b(s,X(s)) +f_i(s) ] \mathrm{d}s + \sigma(s) \mathrm{d}W(s),\quad s\in [t,T]\\
        X(t)=x_i\in L^2(\mathcal{O}),
    \end{cases}
    \end{equation*}
    $i=1,2$. Furthermore, let
    \begin{equation*}
    \begin{cases}
        x_1\geq x_2,\\
        f_1(s) \geq f_2(s).
    \end{cases}
    \end{equation*}
    Then $X_1(s) \geq X_2(s)$ for all $s\in [t,T]$.
\end{theorem}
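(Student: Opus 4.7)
The plan is to reduce the statement to Theorem \ref{comparisonSPDEs1} via the standard device of absorbing the Lipschitz constant $C_L$ into the unbounded operator $A$. A general Lipschitz Nemytskii operator $b$ does not satisfy the one-sided bound \eqref{assumptionstar} (for instance $\mathfrak{b}(\mathrm{x})=-\mathrm{x}$ would fail it), so the idea is to shift $\mathfrak{b}$ by $C_L\mathrm{x}$ to make it non-decreasing, and shift $A$ by $-C_L I$ to compensate.

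Set $\tilde{\mathfrak{b}}(s,\mathrm{x}):=\mathfrak{b}(s,\mathrm{x})+C_L\mathrm{x}$ and let $\tilde{b}$ be its Nemytskii operator, so $\tilde{b}(s,x)=b(s,x)+C_L x$. Then $\tilde{\mathfrak{b}}(s,\cdot)$ is $2C_L$-Lipschitz and non-decreasing. Pointwise in $\xi\in\mathcal{O}$,
\begin{equation*}
\bigl[\tilde{b}(s,x_1)(\xi)-\tilde{b}(s,x_2)(\xi)\bigr]_{+}\leq 2C_L\bigl[x_1(\xi)-x_2(\xi)\bigr]_{+},
\end{equation*}
by Lipschitz continuity when $x_1(\xi)\geq x_2(\xi)$ and by monotonicity in the reverse case; integrating in $\xi$ gives condition \eqref{assumptionstar} for $\tilde{b}$ with constant $2C_L$. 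Set $\tilde{A}:=A-C_L I$. Since $A$ is maximal dissipative, so is $\tilde{A}$, and $\mathrm{e}^{s\tilde{A}}=\mathrm{e}^{-C_L s}\mathrm{e}^{sA}$ is a positivity preserving contraction semigroup on $H$ by Assumption \ref{comparisonprinciple}. Formally the state equation becomes
\begin{equation*}
\mathrm{d}X(s)=\bigl[\tilde{A}X(s)+\tilde{b}(s,X(s))+f_i(s)\bigr]\mathrm{d}s+\sigma(s)\mathrm{d}W(s),
\end{equation*}
which has the form required by Theorem \ref{comparisonSPDEs1}.

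The main step is to verify that $X_i$ is still a mild solution after this reformulation, that is, that the mild identity
\begin{equation*}
X(s)=\mathrm{e}^{(s-t)A}x_i+\int_t^s \mathrm{e}^{(s-r)A}[b(r,X(r))+f_i(r)]\mathrm{d}r+\int_t^s \mathrm{e}^{(s-r)A}\sigma(r)\mathrm{d}W(r)
\end{equation*}
is equivalent to
\begin{equation*}
X(s)=\mathrm{e}^{(s-t)\tilde{A}}x_i+\int_t^s \mathrm{e}^{(s-r)\tilde{A}}[\tilde{b}(r,X(r))+f_i(r)]\mathrm{d}r+\int_t^s \mathrm{e}^{(s-r)\tilde{A}}\sigma(r)\mathrm{d}W(r).
\end{equation*}
This is the standard equivalence of mild solutions under a bounded linear perturbation of $A$: substituting the first formula into the extra term $C_L\int_t^s \mathrm{e}^{(s-r)\tilde{A}}X(r)\mathrm{d}r$, exchanging the order of integration via the stochastic Fubini theorem, and using $\mathrm{e}^{sA}-\mathrm{e}^{s\tilde{A}}=(\mathrm{e}^{C_L s}-1)\mathrm{e}^{s\tilde{A}}$ collapses the right-hand side of the second formula onto that of the first. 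Once this is in hand, applying Theorem \ref{comparisonSPDEs1} to the reformulated equation with $(\tilde{A},\tilde{b},\sigma,f_i)$ and hypotheses $x_1\geq x_2$, $f_1\geq f_2$ yields $X_1(s)\geq X_2(s)$ for every $s\in[t,T]$. The Fubini-based equivalence of the two mild formulations is the only delicate step; everything else is a direct application of Theorem \ref{comparisonSPDEs1}.
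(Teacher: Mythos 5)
Your proof is correct and follows essentially the same route as the paper: both arguments absorb the Lipschitz constant $C_L$ into an exponential shift so that the resulting drift satisfies the one-sided condition \eqref{assumptionstar}, and then invoke Theorem \ref{comparisonSPDEs1}. The only difference is cosmetic: the paper rescales the solution, setting $Y_i(s)=\mathrm{e}^{C_L(s-t)}X_i(s)$ and keeping $A$ fixed, whereas you shift the generator to $\tilde{A}=A-C_LI$ and keep $X_i$ fixed; both variants hinge on the same (standard, and in the paper likewise unproved in detail) equivalence of mild formulations under a bounded perturbation, which you correctly identify and sketch via the stochastic Fubini theorem.
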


\begin{proof}[Proof of Theorem \ref{comparisonSPDEs}]
    Note that for $C\in\mathbb{R}$, the process $Y_i(s) := \mathrm{e}^{C(s-t)} X_i(s)$, $s\in [t,T]$, is a mild solution of
    \begin{equation*}
        \mathrm{d}Y(s) = [ AY(s) + CY(s) + \mathrm{e}^{C(s-t)} b(s,\mathrm{e}^{-C(s-t)}Y(s)) + \mathrm{e}^{C(s-t)} f_i(s) ] \mathrm{d}s + \mathrm{e}^{C(s-t)} \sigma(s) \mathrm{d}W(s).
    \end{equation*}
 We set $C=C_L$, where $C_L$ is the Lipschitz constant of $\mathfrak{b}$. Then
    \begin{equation*}
        \bar b(s,x) := C_L x + \mathrm{e}^{C_Ls} b(s,\mathrm{e}^{-C_Ls} x)\quad \text{and}\quad \bar \sigma(s) := \mathrm{e}^{C(s-t)} \sigma(s)
    \end{equation*}
    satisfies Assumption \ref{comp:monotonicity}. Indeed, note that due to the Lipschitz continuity of $\mathfrak{b}$, we have
    \begin{equation*}
    \begin{split}
        C_L\mathrm{x}_1 + \mathrm{e}^{C_Ls} \mathfrak{b}(s,\mathrm{e}^{-C_Ls}\mathrm{x}_1) - C_L\mathrm{x}_2 - \mathrm{e}^{C_Ls} \mathfrak{b}(s,\mathrm{e}^{-C_Ls}\mathrm{x}_2) \leq \begin{cases}
            2C_L(\mathrm{x}_1-\mathrm{x}_2),& \text{if}\; \mathrm{x}_1\geq \mathrm{x}_2\\
            0,& \text{if}\; \mathrm{x}_1\leq \mathrm{x}_2,
        \end{cases}
    \end{split}
    \end{equation*}
    and therefore,
    \begin{equation*}
        \left [ C_L\mathrm{x}_1 + \mathrm{e}^{C_Ls} \mathfrak{b}(s,\mathrm{e}^{-C_Ls}\mathrm{x}_1) - C_L\mathrm{x}_2 - \mathrm{e}^{C_Ls} \mathfrak{b}(s,\mathrm{e}^{-C_Ls}\mathrm{x}_2) \right ]_+ \leq 2C_L [\mathrm{x}_1 - \mathrm{x}_2 ]_+
    \end{equation*}
    from which \eqref{assumptionstar} follows.
    Thus, we can apply Theorem \ref{comparisonSPDEs1}, which concludes the proof.
\end{proof}

\begin{assumption}\label{comparisonprinciple3}
    The function $\mathfrak{b}: \mathbb{R} \times\Lambda_0\to \mathbb{R}$ is continuous and there exists a constant $C>0$ such that
    \begin{equation*}
    \begin{cases}
        |\mathfrak{b}(\mathrm{x}_1,a) - \mathfrak{b}(\mathrm{x}_2,a) | \leq C |\mathrm{x}_1 - \mathrm{x}_2|\\
    	| \mathfrak{b}(\mathrm{x},a) | \leq C(1 + |\mathrm{x}| + \| a \|_{\Lambda} )
    \end{cases}
    \end{equation*}
    for all $\mathrm{x},\mathrm{x}_1,\mathrm{x}_2\in \mathbb{R}$, $a\in \Lambda_0$, and
    \begin{equation*}
        \lambda \mathfrak{b}(\mathrm{x}_1,a_1) +(1-\lambda) \mathfrak{b}(\mathrm{x}_0,a_0) \leq \mathfrak{b}(\lambda \mathrm{x}_1 +(1-\lambda)\mathrm{x}_0,\lambda a_1+(1-\lambda)a_0),
    \end{equation*}
    for all $\lambda \in [0,1]$, $\mathrm{x}_0,\mathrm{x}_1\in \mathbb{R}$ and $a_0,a_1\in \Lambda_0$.
\end{assumption}

We consider the control problem \eqref{costfunctional}--\eqref{state}, where $b:H\times \Lambda_0 \to H$ is given by the Nemytskii operator associated with $\mathfrak{b}$, i.e.,
\begin{equation*}
    b(x,a)(\xi) := \mathfrak{b}(x(\xi),a)
\end{equation*}
for $x\in H$, $a\in \Lambda_0$ and $\xi\in\mathcal{O}$.

\begin{theorem}\label{th:convcomparison1}
Let Assumptions \ref{lglipschitzfirstvariable}, \ref{comparisonprinciple}, \ref{comparisonprinciple2} and \ref{comparisonprinciple3} be satisfied, and let $\sigma$ be constant. Then $V(t,\cdot)$ is convex.
\end{theorem}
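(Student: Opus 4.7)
The plan is to mirror the proof of Theorem \ref{convexity3}, replacing the appeal to Theorem \ref{comparisonSPDEs1} by the Nemytskii comparison result Theorem \ref{comparisonSPDEs}. The starting setup is identical: fix $\varepsilon>0$, $x_0,x_1\in H$, $\lambda\in[0,1]$, choose $\varepsilon$-optimal controls $a_0^\varepsilon(\cdot),a_1^\varepsilon(\cdot)\in\mathcal{U}_t$ for the initial conditions $x_0$ and $x_1$, set $a_\lambda^\varepsilon(\cdot)=\lambda a_1^\varepsilon(\cdot)+(1-\lambda)a_0^\varepsilon(\cdot)$, and define $X_0,X_1,X_\lambda,X^\lambda$ as in \eqref{x1x0differentcontrols}--\eqref{lambdadefinition}.

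Next, I would observe that under Assumption \ref{comparisonprinciple3}, the joint concavity of $\mathfrak{b}$ lifts pointwise (in the spatial variable $\xi\in\mathcal{O}$) to the Nemytskii operator $b$, giving
\begin{equation*}
    \lambda b(X_1(s),a_1^\varepsilon(s)) + (1-\lambda)b(X_0(s),a_0^\varepsilon(s)) \leq b(X^\lambda(s),a_\lambda^\varepsilon(s))\quad \text{a.e. in } \mathcal{O}.
\end{equation*}
Writing the dynamics of $X^\lambda$ as
\begin{equation*}
    \mathrm{d}X^\lambda(s) = \bigl[AX^\lambda(s)+b(X^\lambda(s),a_\lambda^\varepsilon(s))+f_2(s)\bigr]\mathrm{d}s+\sigma\,\mathrm{d}W(s),
\end{equation*}
with $f_2(s):=\lambda b(X_1(s),a_1^\varepsilon(s))+(1-\lambda)b(X_0(s),a_0^\varepsilon(s))-b(X^\lambda(s),a_\lambda^\varepsilon(s))\leq 0$, and noting that $X_\lambda$ solves the analogous equation with $f_1\equiv 0\geq f_2$ and the same initial datum $x_\lambda$, I would then apply Theorem \ref{comparisonSPDEs} to the frozen coefficient $\mathfrak{b}_s(\mathrm{x}):=\mathfrak{b}(\mathrm{x},a_\lambda^\varepsilon(s))$. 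The hypotheses of that theorem hold: $\mathfrak{b}_s$ is uniformly Lipschitz in $\mathrm{x}$ by Assumption \ref{comparisonprinciple3}, measurability follows from the progressive measurability of $a_\lambda^\varepsilon$, and the integrability $\mathfrak{b}(0,a_\lambda^\varepsilon(\cdot))\in L^1(t,T)$ $\mathbb{P}$-a.s.\ together with $f_2\in L^1(t,T;H)$ $\mathbb{P}$-a.s.\ follow from the linear growth bound on $\mathfrak{b}$ combined with the integrability \eqref{eq:integr} of the admissible controls and the estimate \eqref{xeq:mildsupest} for the state trajectories. The comparison therefore yields $X_\lambda(s)\geq X^\lambda(s)$ for all $s\in[t,T]$.

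Finally, exploiting Assumption \ref{comparisonprinciple2}, namely the monotonicity and convexity of $l(\cdot,a)$ and $g$, I conclude exactly as in Theorem \ref{convexity3}:
\begin{equation*}
\begin{split}
    V(t,x_\lambda)&\leq J(t,x_\lambda;a_\lambda^\varepsilon(\cdot))
    =\mathbb{E}\!\left[\int_t^T l(X_\lambda(s),a_\lambda^\varepsilon(s))\mathrm{d}s+g(X_\lambda(T))\right]\\
    &\leq \mathbb{E}\!\left[\int_t^T l(X^\lambda(s),a_\lambda^\varepsilon(s))\mathrm{d}s+g(X^\lambda(T))\right]\\
    &\leq \lambda J(t,x_1;a_1^\varepsilon(\cdot))+(1-\lambda)J(t,x_0;a_0^\varepsilon(\cdot))
    \leq \lambda V(t,x_1)+(1-\lambda)V(t,x_0)+\varepsilon,
\end{split}
\end{equation*}
and letting $\varepsilon\to 0$ gives convexity of $V(t,\cdot)$.

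The main obstacle is the verification step for Theorem \ref{comparisonSPDEs}: one must make sure that the pointwise concavity of $\mathfrak{b}$ in $(\mathrm{x},a)$ really transfers to a pointwise a.e. inequality between the Nemytskii images (harmless, since concavity in $\mathbb{R}$ gives the inequality at each $\xi$), and that the perturbation $f_2$ and the initial data satisfy the measurability and integrability hypotheses of the comparison theorem. Once these checks are in place the argument is completely parallel to Theorem \ref{convexity3}, with the extra flexibility that the comparison theorem no longer demands joint concavity of the full operator $b$ on $H$ but only of the scalar coefficient $\mathfrak{b}$.
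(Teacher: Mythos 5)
Your proposal is correct and is exactly the argument the paper intends: the paper omits the proof entirely, stating only that it ``follows along the same lines as the proof of Theorem \ref{convexity3},'' and you have reconstructed precisely those lines, with the one necessary substitution (Theorem \ref{comparisonSPDEs} with the control frozen into $\mathfrak{b}$ in place of Theorem \ref{comparisonSPDEs1}) correctly identified and its hypotheses duly checked.
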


The proof follows along the same lines as the proof of Theorem \ref{convexity3}.

\begin{remark}\label{remark_spde_nemytskii}
\begin{itemize}
    \item[(i)] The method used in Section \ref{semiconcavity} to prove semiconcavity of the value function does not directly apply to control problems governed by SPDEs of Nemystkii type. The reason is that $C^{1,1}$ regularity of the function $\mathfrak{b}$ does not translate to $C^{1,1}$ regularity of the associated Nemytskii operator. Nevertheless, the method can be adapted and semiconcavity can still be proven under reasonable assumptions, see Section \ref{example_reaction_diffusion}.
    \item[(ii)] In this whole subsection, the restriction to additive noise is crucial only in the proofs of the comparison theorems. If the comparison property \eqref{comparisonresult}, together with the assumptions imposed on $l$ and $g$, holds, the same arguments would give convexity of the value function for problems with multiplicative noise.
\end{itemize}    
\end{remark}

\subsection{$C^{1,1}$ Regularity of the Value Function}\label{sec:C11reg}

It is well known that if a uniformly continuous function $v:H\to\mathbb{R}$ is semiconvex and semiconcave, then $v\in C^{1,1}(H)$. The proof of this result in a Hilbert space can be found for instance in \cite{lasry1986}. Thus in each of the cases of Subsection \ref{semiconvexity}, in combination with the semiconcavity of the value function proved in Subsection \ref{semiconcavity}, we obtain that $V(t,\cdot)$ is $C^{1,1}$ on $H$ for all $t\in [0,T]$ (for SPDEs of Nemystkii type, see Remark \ref{remark_spde_nemytskii}(i) as well as Section \ref{example_reaction_diffusion}). We point out that this $C^{1,1}$ regularity result for the value function is applicable also to deterministic problems, i.e., when $\sigma=0.$

\section{Optimal Synthesis}\label{sec:optsynt}
In this section, we show how to use viscosity solution methods and the fact that the value function is $C^{1,1}$  in the space variable to construct optimal feedback controls. Since we only have one derivative of the value function, in order to perform optimal synthesis, we need to assume that the noise coefficient $\sigma$ is independent of the control variable.
\begin{assumption}\label{hp:sigma_independent_control}
Let $\sigma(x,a) = \sigma(x)$ be independent of the control and let there be a constant $C>0$ such that
\begin{equation*}
    \| \sigma(x) - \sigma(x^{\prime}) \|_{L_2(\Xi,H)} \leq C \|x-x^{\prime}\|_{-1}
\end{equation*}
for all $x,x^{\prime}\in H$.
\end{assumption}
In this case, the HJB equation is given by
\begin{equation}\label{HJBsemilinear}
\begin{cases}
	v_t + \langle Ax, Dv \rangle_H + \frac 1 2 \text{Tr} [ \sigma(x) \sigma^{\ast}(x) D^2 v ] + \mathcal{H}(x,Dv) =0\\
	v(T,\cdot) = g,
\end{cases}
\end{equation}
where the Hamiltonian $\mathcal{H}:H\times H \to \mathbb{R}$ is given by
\begin{equation*}
\mathcal{H}(x,p)=\inf_{a\in \Lambda_0}
\mathcal{F}(x,p,a),\quad \mathcal{F}(x,p,a) := \langle p,b(x,a) \rangle_H + l(x,a)
\end{equation*}
for all $x,p \in H$. For $m>0$, we define
\begin{equation*}
    \mathcal{H}_m(x,p):=\inf_{a\in \Lambda_0, \|a\|_\Lambda \leq m} \mathcal{F}(x,p,a)
\end{equation*}
for all $x,p \in H$. Since the coefficients of our control problem may not be bounded in $a \in \Lambda_0$ we assume the following.
\begin{assumption}\label{coercivity}
For every $R>0$ there is $m_R>0$ such that
\begin{equation*}
\mathcal{H}(x,p)=\mathcal{H}_{m_R}(x,p)
\end{equation*}
for all $x,p\in H,\|p\|_H\leq R$.
\end{assumption}
\begin{remark}\label{rem:H_attains_inf_bounded_set_H}
  This assumption is satisfied for instance if $b(x,a)=b_1(x)+b_2(x,a)$, where  $b_1,b_2$ satisfy Assumption \ref{bsigmalipschitzfirstvariable}(i) and
\begin{equation*}
\|b_2(x,a)\|_H\leq C(1+\|a\|_{\Lambda})
\end{equation*}
for all $x\in H, a\in\Lambda_0$, and
\begin{equation}\label{eq:superlinear_growth_l}
\lim_{\|a\|_{\Lambda} \to+ \infty}\frac{l(x,a)}{\|a\|_{\Lambda}}=+\infty
\end{equation}
uniformly in $x \in H$.  
\end{remark}

Let us recall the definition of a viscosity solution of \eqref{HJBsemilinear}, see\cite[Definition 3.35]{fabbri2017}. 
\begin{definition}\label{def:test_functions}
A function $\psi:(0,T)\times H\to \mathbb{R}$ is a test function if $\psi=\varphi+h$, where:
\begin{itemize}
\item[(i)] 
$\varphi \in C^{1,2}((0,T)\times H), \varphi$ is $B$--lower semicontinuous and $\varphi, \varphi_t, D\varphi , D^2\varphi , A^{\ast}D\varphi$ are uniformly continuous on $(0,T)\times H$;
\item[(ii)] 
$h(t,x)=h_0(t,\|x\|_H)$, where $h_0 \in C^{1,2}((0,T)\times\mathbb R), h_0(t,\cdot)$ is even and for every $t\in(0,T), h_0(t,\cdot)$ is non-decreasing on 
$[0,+\infty)$.
\end{itemize}
\end{definition}

We say that a function is locally bounded if it is bounded on bounded subsets of its domain.
\begin{definition}\label{def:viscosity_solution}
\begin{enumerate}[label=(\roman*)]
\item A locally bounded upper semicontinuous function $v:(0,T]\times H\to\mathbb{R}$ is a viscosity subsolution of \eqref{HJBsemilinear} if $v$ is $B$--upper semicontinuous on $(0,T)\times H$, $v(0,x)\leq g(x)$ for $x\in H$ and, whenever $v-\psi$ has a local maximum at $(t,x) \in (0,T)\times H$ for a test function $\psi =\varphi+h$, then 
\begin{equation*}
\psi_t(t,x) +\langle x, A^{\ast}D\varphi(t,x) \rangle_H + \frac 1 2 \text{Tr} [ \sigma(x) \sigma^{\ast}(x) D^2 \psi(t,x) ] + \mathcal{H}(x,D\psi(t,x)) \geq 0.
\end{equation*}
\item A locally bounded lower semicontinuous function $v:(0,T]\times H\to\mathbb{R}$ is a viscosity subsolution of \eqref{HJBsemilinear} if $v$ is $B$--lower semicontinuous on $(0,T)\times H$, $v(0,x)\geq g(x)$ for $x\in H$ and, whenever $v+\psi$ has a local minimum at $(t,x) \in (0,T)\times H$ for a test function $\psi =\varphi+h$, then 
\begin{equation*}
-\psi_t(t,x) -\langle x, A^{\ast}D\varphi(t,x) \rangle_H + \frac 1 2 \text{Tr} [ \sigma(x) \sigma^{\ast}(x) D^2 \psi(t,x) ] + \mathcal{H}(x,-D\psi(t,x)) \leq 0.
\end{equation*}
\item 
A function $v:(0,T]\times H\to\mathbb{R}$ is a viscosity solution of \eqref{HJBsemilinear} if it is both a viscosity subsolution of \eqref{HJBsemilinear} and a viscosity supersolution of \eqref{HJBsemilinear}.
\end{enumerate}
\end{definition}

We have the following uniqueness result.
\begin{proposition}\label{prop:V_viscosity_sol}
Let Assumptions \ref{assumptionA}, \ref{bsigmalipschitzfirstvariable}, \ref{lglipschitzfirstvariable}, \ref{hp:sigma_independent_control}, \ref{coercivity}  hold.  Then  $V$ is the unique viscosity solution of \eqref{HJBsemilinear} in the set
\begin{equation*}
\begin{split}
S=\Big\{&u:[0,T]\times H\to\mathbb{R}: |u(t,x)|\leq C(1+\|x\|_H^k)\quad\textit{for some}\,\,k\geq 0,
\\
&|u(t,x)-u(t,y)|\leq C\|x-y\|_H\quad\textit{for all}\,\,t\in(0,T],x,y\in H, and
\\
&\lim_{t\to T}|u(t,x)-g(e^{(T-t)A}x)|=0 \quad\textit{uniformly on bounded subsets of}\,\,H\Big\}.
\end{split}
\end{equation*}
Moreover, $V$ is uniformly continuous on every set $[0,\tau]\times B_R$ for every $0<\tau<T, R>0$.
\end{proposition}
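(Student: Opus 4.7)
The plan is to establish the three claims (viscosity solution property, uniqueness in $S$, and uniform continuity on $[0,\tau]\times B_R$) by the standard route for $B$-continuous viscosity solutions developed in \cite[Chapter 3]{fabbri2017}, using ingredients already built up in the paper.

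First, I would verify that $V$ belongs to $S$. The linear growth $|V(t,x)|\leq C(1+\|x\|_H)$ follows from \eqref{xeq:mildsupest} and Assumption \ref{lglipschitzfirstvariable}, and the required Lipschitz estimate in $x$ is precisely Theorem \ref{th:V_lip}. For the terminal condition $\lim_{t\to T}|V(t,x)-g(\mathrm{e}^{(T-t)A}x)|=0$ uniformly for $x$ in bounded subsets of $H$, I would fix an arbitrary constant admissible control $\bar a\in\Lambda_0$ and compare $J(t,x;\bar a)$ with $g(\mathrm{e}^{(T-t)A}x)$ using the mild formulation of \eqref{state}, the Lipschitz continuity of $g$, Assumption \ref{lglipschitzfirstvariable} (for bounding the running cost integral on $[t,T]$), and the estimate $\mathbb{E}[\|X(T;t,x,\bar a)-\mathrm{e}^{(T-t)A}x\|_H]\to 0$ as $t\to T$, uniformly for $x$ in bounded sets. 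This gives the one-sided bound $V(t,x)\leq g(\mathrm{e}^{(T-t)A}x)+o(1)$. The reverse inequality comes from choosing $\varepsilon$-optimal controls and combining the same estimates with Lipschitz continuity of $g$.

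Next, I would prove the dynamic programming principle: for any $0\leq t\leq s\leq T$ and $x\in H$,
\[
V(t,x)=\inf_{a(\cdot)\in \mathcal U_t}\mathbb{E}\left[\int_t^s l(X(r),a(r))\,\mathrm{d}r+V(s,X(s))\right],
\]
which holds under Assumptions \ref{bsigmalipschitzfirstvariable} and \ref{lglipschitzfirstvariable} by standard arguments (see \cite[Theorem 2.24]{fabbri2017}). Combining DPP with Assumption \ref{coercivity}, which confines the infimum in the Hamiltonian to the bounded set $\{\|a\|_\Lambda \leq m_R\}$ whenever the gradient involved is bounded by $R$, I would verify the viscosity sub- and supersolution inequalities. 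Concretely, given a test function $\psi=\varphi+h$ for which $V-\psi$ has a local maximum at $(t_0,x_0)\in(0,T)\times H$, I would apply the It\^o-type formula \cite[Proposition 1.166]{fabbri2017} to $\psi(\cdot,X(\cdot;t_0,x_0,\bar a))$ along a suitable constant control $\bar a$, use the identity $\langle AX,D\varphi\rangle_H = \langle X,A^{\ast}D\varphi\rangle_H$ to absorb the unbounded operator, take expectations, invoke DPP, divide by $s-t_0>0$ and send $s\downarrow t_0$ via dominated convergence to obtain the subsolution inequality at $(t_0,x_0)$; the supersolution side proceeds analogously, using near-optimal controls which, by Assumption \ref{coercivity} together with the boundedness of $D\psi$ near $(t_0,x_0)$, can be chosen uniformly bounded in $\Lambda$. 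Uniqueness in $S$ then follows directly from the comparison theorem for $B$-continuous viscosity solutions, see, e.g., \cite[Theorem 3.50]{fabbri2017}, whose hypotheses are guaranteed by Assumptions \ref{assumptionA}, \ref{bsigmalipschitzfirstvariable}, \ref{lglipschitzfirstvariable}, \ref{hp:sigma_independent_control}, together with the growth, Lipschitz, and terminal behavior encoded in $S$.

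Finally, since $V(t,\cdot)$ is Lipschitz with constant uniform in $t\in[0,\tau]$, to obtain uniform continuity on $[0,\tau]\times B_R$ it suffices to bound $|V(t,x)-V(s,x)|$ for $0\leq t<s\leq\tau<T$, $x\in B_R$. Using DPP with a control that is $\varepsilon$-optimal for $V(s,x)$ extended by an arbitrary fixed control on $[t,s]$, together with the linear growth of $l$ in $a$ (controlled through the integrability condition \eqref{eq:integr}), the Lipschitz continuity of $V$ in $x$, and the mean-square continuity $\mathbb{E}[\|X(s;t,x,a(\cdot))-x\|_H^2]\to 0$ as $s\downarrow t$ (uniform for $x\in B_R$), we conclude that $t\mapsto V(t,x)$ is continuous uniformly for $x\in B_R$. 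The main anticipated obstacle is the viscosity verification step: one must juggle $B$-upper/lower semicontinuity of the test functions against the unbounded operator $A$, and exploit Assumption \ref{coercivity} to handle the non-compact control set, since the usual measurable selection arguments available in finite dimensions are not directly applicable here.
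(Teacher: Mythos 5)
Your route is genuinely different from the paper's: you propose the classical direct verification (DPP, It\^o's formula on test functions, then comparison), whereas the paper never re-proves the viscosity property at all. Instead it truncates the control set, defines $V^m$ over controls with $\|a\|_{L^\infty}\leq m$, invokes \cite[Theorem 3.67]{fabbri2017} for each truncated problem, shows via the uniform Lipschitz bound and Assumption \ref{coercivity} that $V^m=V^{\overline m}$ for $m\geq\overline m=m_R$, and then proves $V^{\overline m}=V$ by truncating $\varepsilon$-optimal controls and passing to the limit with Lemma \ref{estimatex1x0one}. Uniqueness and the uniform continuity statement are then inherited from the truncated equations. The reason the paper takes this detour is stated explicitly right before the proof: the existence/uniqueness and comparison theorems of \cite{fabbri2017} require coefficients bounded in the control variable, which fails here.

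This is precisely where your proposal has genuine gaps. First, in the supersolution step you assert that near-optimal controls ``can be chosen uniformly bounded in $\Lambda$'' by Assumption \ref{coercivity}. That assumption only localizes the pointwise infimum defining the Hamiltonian $\mathcal{H}(x,p)$ for $\|p\|_H\leq R$; it says nothing about $\varepsilon$-optimal controls for the Bellman problem. In the paper the corresponding fact — that restricting to controls bounded by $\overline m$ does not change the value function — is the identity $V=V^{\overline m}$, which is a \emph{conclusion} obtained through the PDE argument (Steps 2--3), not an available input. Without it, the uniform-in-control estimates your argument needs (e.g.\ $\mathbb{E}\|X(s;t,x,a(\cdot))-x\|_H\to 0$ as $s\downarrow t$ uniformly over admissible controls, and the vanishing of the It\^o error terms) fail, because the drift and diffusion grow linearly in $a$ and \eqref{eq:integr} gives no uniform smallness of $\mathbb{E}\int_t^s\|a(r)\|_\Lambda^p\,\mathrm{d}r$ over $\mathcal{U}_t$; the same issue undermines your time-continuity argument for the last assertion. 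Second, invoking the comparison theorem for the full equation \eqref{HJBsemilinear} is not directly legitimate for the same reason: its hypotheses on the Hamiltonian are formulated for control sets over which the data are uniformly bounded. The paper circumvents this by observing that any solution in $S$ is Lipschitz in $x$, hence (by Assumption \ref{coercivity}) also a solution of the truncated equation \eqref{eq:hjb_V_m}, and applying comparison there. Finally, even granting bounded controls, the It\^o step on mild solutions with general test functions $\psi=\varphi+h$ requires the Yosida-approximation machinery of \cite[Chapter 3]{fabbri2017}; you flag this but do not resolve it. In short, your outline reconstructs the proof of the theorem the paper cites rather than the paper's actual argument, and the points at which you try to handle the unbounded control set are exactly the points that do not go through without the truncation device.
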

Note that we cannot immediately use \cite[Theorem 3.67]{fabbri2017} as in our case the coefficients of the control problem are not bounded in $\Lambda$.
\begin{proof}
\begin{enumerate}
    \item Let $m>0$ and consider the truncated control set  
    $\mathcal{U}^m_t= \left \{a(\cdot)\in \mathcal{U}_t: \| a\|_{L^{\infty}((t,T)\times\Omega;\Lambda)} \leq m  \right\}$.
     Define the value function for the optimal control problem with controls $\mathcal{U}^m_t$ by
     \begin{equation*}
     V^m(t,x) := \inf_{a(\cdot)\in \mathcal{U}^m_t} J(t,x;a(\cdot)).
     \end{equation*}
     By \cite[Theorem 3.67]{fabbri2017}, $V^m$ is the unique viscosity solution of 
     \begin{equation}\label{eq:hjb_V_m}
\begin{cases}
	v_t + \langle Dv,Ax \rangle_H +\frac 1 2\text{Tr} [ \sigma(x) \sigma^{\ast}(x) D^2 v ] + \mathcal{H}_m(x,Dv) =0\\
	v(T,\cdot) = g
\end{cases}
\end{equation}
in $S$. Moreover, it follows from \cite[Proposition 3.62 and Lemma 3.19(i)]{fabbri2017} that $V^m$ is uniformly continuous on $[0,\tau]\times B_R$ for every $0<\tau<T, R>0$.
\item Note that, as in Theorem \ref{th:V_lip}, $V^m(t,\cdot)$ is Lipschitz uniformly in $t$ with Lipschitz constant $R>0$ independent of $m.$ Then, if $\psi$ is a test function in the definition of viscosity solution of \eqref{eq:hjb_V_m} and $V^m-\psi$ (respectively, $V^m+\psi$) achieves a local maximum (respectively, minimum) at $(\overline t,\overline x)$, we have  $\|D \psi(\overline t,\overline x)\|_H \leq R$. Thus, setting $\overline m = m_R$, where $m_R$ is given by Assumption \ref{coercivity}, this implies  
\begin{equation*}
V^m=V^{\overline m}
\end{equation*}
for all $m \geq \overline m$.
\item We prove that 
\begin{equation*}
V^{\overline m}=V.
\end{equation*}
Since obviously $V^{\overline m} \geq V$  as $\mathcal{U}^{\overline m}_t \subset \mathcal{U}_t$, we are left to show that 
\begin{equation}\label{eq:V_m_leq_V}
    V^{\overline m} \leq V.
\end{equation}
Indeed, let $t \in[t,T], x \in H$. Fix $\varepsilon>0$ and let $a^\varepsilon(\cdot)$ be an $\varepsilon$-optimal control for $V$, i.e., $J(t,x,a^\varepsilon(\cdot)) \leq V(t,x)+\varepsilon$. Fix $m > \overline m$ and let $a_m^\varepsilon(\cdot) \in \mathcal{U}^m_t$ be a truncation at $m$ of $a^\varepsilon(\cdot)$, that is, $a_m^\varepsilon(t)=a^\varepsilon(t)$ if $\|a^\varepsilon(t)\|_\Lambda \leq m$ and $a_m^\varepsilon(t)=a_0$ for some fixed $a_0\in\Lambda_0$ otherwise. First, by Step 2 and the definition of $V^m$, we have
\begin{equation}\label{eq:V_bar_m_leq_J_a_m}
    V^{\overline m}(t,x)=V^m(t,x) \leq J(t,x;a_m^\varepsilon(\cdot)).
\end{equation}
We now prove
\begin{equation}\label{eq:J(a_m_eps)_to_J(a_eps)}
   \lim_{m \to \infty} J(t,x;a_m^\varepsilon(\cdot)) =J(t,x;a^\varepsilon(\cdot))
\end{equation}
for all $\varepsilon>0$. Indeed, denoting by $X^\varepsilon_m, X^\varepsilon$ the solutions of the state equation with initial time $t$, initial datum $x$ and controls $a_m^\varepsilon(\cdot)$, $a^\varepsilon(\cdot)$, respectively, we have
\begin{equation*}
\begin{split}
    | J(t,x;a_m^\varepsilon(\cdot)) - J(t,x;a^\varepsilon(\cdot) )| & \leq \int_t^T \mathbb E  |l(X^\varepsilon_m(s),a_m^\varepsilon(s))-l(X^\varepsilon(s),a_m^\varepsilon(s))| \mathrm{d}s\\
    &\quad  + \int_t^T \mathbb E |l(X^{a^\varepsilon}(s),a_m^\varepsilon(s))-l(X^{a^\varepsilon}(s),a^\varepsilon(s))| \mathrm{d}s\\
    &\quad +\mathbb E |g(X^\varepsilon_m(T))-g(X^\varepsilon(T))|=:I_1^m+I_2^m+I_3^m.
\end{split}
\end{equation*}
Noticing that $a_m^\varepsilon(s) \xrightarrow{m \to \infty} a^\varepsilon(s)$ a.s., by dominated convergence we have $I_2^m \xrightarrow{m\to \infty}0.$\\
For $I_1^m,$ by Assumption \ref{lglipschitzfirstvariable}, Lemma \ref{estimatex1x0one} and dominated convergence, we have
\begin{equation*}
\begin{split}
   I_1^m &\leq C\int_t^T \mathbb E \| X^\varepsilon_m(s)-X^\varepsilon(s)\|_H  \mathrm{d}s \leq C \mathbb E \left[ \sup_{s \in [t,T]}\| X^\varepsilon_m(s)-X^\varepsilon(s)\|_H   \right]\\ 
    &\leq C \mathbb E \left[ \sup_{s \in [t,T]}\| X^\varepsilon_m(s)-X^\varepsilon(s)\|_H^2   \right]^{1/2} \leq C \left ( \int_t^T \mathbb E \|a_m^\varepsilon (s)-a^\varepsilon(s) \|_\Lambda^2 \mathrm{d}s \right)^{1/2} \xrightarrow{m \to \infty} 0.
\end{split}
\end{equation*}
A similar argument also shows that $I_3^m   \xrightarrow{m \to \infty} 0$. This proves \eqref{eq:J(a_m_eps)_to_J(a_eps)}. Taking the limit $m \to \infty$ in \eqref{eq:V_bar_m_leq_J_a_m} we therefore obtain
\begin{equation*}
    V^{\overline m}(t,x) \leq J(t,x;a^\varepsilon(\cdot)) \leq V(t,x)+ \varepsilon,
\end{equation*}
where the last inequality follows from the fact that, by definition, $a^\varepsilon(\cdot)$ is an $\varepsilon$-optimal control for $(t,x)$. Letting $\varepsilon \to 0$ we get \eqref{eq:V_m_leq_V}.
\item From the previous steps it follows that $V$ is the unique viscosity solution of \eqref{eq:hjb_V_m} for all $m\geq\overline m$ and, since $V(t,\cdot)$ is Lipschitz, $V$ is also a viscosity solution of \eqref{HJBsemilinear}. If $u\in S$ is another viscosity solution of \eqref{HJBsemilinear}, both $V$ and $u$ are viscosity solutions of \eqref{eq:hjb_V_m} for some $m\geq\overline m$ and by comparison for viscosity solutions of \eqref{eq:hjb_V_m} given by \cite[Theorem 3.54]{fabbri2017} (or existence and uniqueness theorem \cite[Theorem 3.67]{fabbri2017}) we conclude that $u=V$.\qedhere
\end{enumerate}
\end{proof}
Motivated by the regularity results of the previous section, we now impose the following assumption.
\begin{assumption}\label{hp:V_C_11}
 Let for every $0\leq t \leq T$, $V(t,\cdot)\in C^{1,1}(H)$ and let there exist $C\geq 0$ such that $\|DV(t,x)\|_H\leq C, \|DV(t,x)-DV(t,x')\|_H\leq C\|x-x'\|_H$ for all $0\leq t \leq T,x,x'\in H$.
\end{assumption}

For $x,p\in H$, let $\Gamma(x,p) \subset \Lambda_0$ denote the set of all $a^{\ast}\in \Lambda_0$ such that
\begin{equation*}
    \mathcal{H}(x,p,a^{\ast}) = \inf_{a\in \Lambda_0} \mathcal{H}(x,p,a).
\end{equation*}
We impose the following assumption.
\begin{assumption}\label{infimumattained}
 \begin{enumerate}[label=(\roman*)]
\item
   There exists a selection function
    \begin{equation*}
        \gamma : H \times H \to \Lambda_0,\quad 
        (x,p) \mapsto \gamma(x,p) \in \Gamma(x,p),
    \end{equation*}
   which is Lipschitz continuous in both variables.
   \item For every $R>0$ there is a modulus $\omega_R$ such that
   \begin{equation*}
   |l(x,a)-l(x,a')|\leq \omega_R(\|a-a'\|_{\Lambda})
   \end{equation*}
   for all $x\in H$ and $a,a'\in\Lambda_0$ such that $\|x\|_H$, $\|a\|_{\Lambda},\|a'\|_{\Lambda}\leq R$.
   \end{enumerate}
\end{assumption}

For an example in which Assumption \ref{infimumattained} is satisfied, see Example \ref{exampleinfimumattained} below. Due to Assumption \ref{infimumattained}, it is immediate to see that $V$, which is the viscosity solution of the HJB equation \eqref{HJBsemilinear}, is also a viscosity solution of the linear equation
\begin{equation}\label{reducedHJB}
    v_t + \langle Ax,Dv\rangle_H + \frac12 \text{Tr} [ \sigma(x) \sigma^{\ast}(x) D^2v ] + \langle \tilde b(t,x), Dv\rangle_H +\tilde l(t,x) = 0,
\end{equation}
where $\tilde b(t,x):= b(x,\gamma(x,DV(t,x)))$ and $\tilde  l(t,x):=l(x,\gamma(x,DV(t,x)))$.

\begin{theorem}\label{th:optsynth}
Let Assumptions \ref{assumptionA}, \ref{bsigmalipschitz}, \ref{lglipschitzfirstvariable}, \ref{hp:sigma_independent_control}, \ref{coercivity}, \ref{hp:V_C_11}, \ref{infimumattained}  hold.
 Then the pair $(a^{\ast}(s),X^{\ast}(s))$, where
	\begin{equation*}
    \begin{cases}
		a^{\ast}(s) = \gamma(X^{\ast}(s),DV(s,X^{\ast}(s)))\\
        X^{\ast}(s) = X(s;t,x,a^{\ast}(\cdot))
   \end{cases}
	\end{equation*}
	is an optimal couple for
 the optimal control problem \eqref{costfunctional}--\eqref{state} and the control problem has an optimal feedback control. 
\end{theorem}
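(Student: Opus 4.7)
The plan is to exploit the $C^{1,1}$ regularity of $V$ in $x$ to reduce the fully nonlinear HJB equation to a linear Kolmogorov equation for which the unique viscosity solution admits a Feynman--Kac representation involving precisely the closed-loop state equation. Since we have only one derivative of $V$, It\^o's formula is not available, so we avoid a direct verification theorem and instead identify $V$ with the Feynman--Kac formula via uniqueness of viscosity solutions.

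First I would establish well-posedness of the closed-loop equation. By Assumption \ref{infimumattained}(i) the selection $\gamma$ is Lipschitz in both arguments, and by Assumption \ref{hp:V_C_11} the map $x\mapsto DV(t,x)$ is Lipschitz uniformly in $t$ with a bounded Lipschitz constant. Hence the feedback drift $\tilde b(t,x)=b(x,\gamma(x,DV(t,x)))$ is Lipschitz in $x$ uniformly in $t$, with at most linear growth, so the closed-loop SDE
\begin{equation*}
dX^{\ast}(s)=[AX^{\ast}(s)+\tilde b(s,X^{\ast}(s))]\,ds+\sigma(X^{\ast}(s))\,dW(s),\qquad X^{\ast}(t)=x,
\end{equation*}
admits a unique mild solution on $[t,T]$ satisfying the usual $L^p$ bounds, and $a^{\ast}(s)=\gamma(X^{\ast}(s),DV(s,X^{\ast}(s)))$ is then a progressively measurable admissible control. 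A bound on $\|a^{\ast}(s)\|_\Lambda$ in terms of $\|X^{\ast}(s)\|_H$ follows from the growth of $\gamma$, placing $a^{\ast}(\cdot)\in\mathcal U_t$.

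Next I would observe that, by definition of $\gamma$,
\begin{equation*}
\mathcal{H}(x,DV(t,x))=\langle \tilde b(t,x),DV(t,x)\rangle_H+\tilde l(t,x),\qquad \tilde l(t,x):=l(x,\gamma(x,DV(t,x))),
\end{equation*}
so that $V$ is a viscosity solution of the linear equation \eqref{reducedHJB}. On the other hand, the function
\begin{equation*}
u(t,x):=\mathbb{E}\!\left[\int_t^T \tilde l(s,X^{\ast}(s))\,ds+g(X^{\ast}(T))\right]=J(t,x;a^{\ast}(\cdot))
\end{equation*}
is, by the standard dynamic-programming argument applied to the linear problem with fixed feedback, a viscosity solution of \eqref{reducedHJB} lying in the same uniqueness class $S$ appearing in Proposition \ref{prop:V_viscosity_sol}: Lipschitz in $x$ uniformly in $t$ and with the correct terminal trace $g(e^{(T-t)A}x)$. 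Invoking uniqueness for the linear Kolmogorov equation (which is a special case of the comparison results that yielded Proposition \ref{prop:V_viscosity_sol}, applied now with the time-dependent coefficients $\tilde b,\tilde l$ in place of the original ones) gives $V=u$, i.e., $V(t,x)=J(t,x;a^{\ast}(\cdot))$, proving optimality of $(a^{\ast},X^{\ast})$.

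The main obstacle is the last step: applying the comparison/uniqueness theorem of \cite[Theorem 3.54 / 3.67]{fabbri2017} to \eqref{reducedHJB} requires that $\tilde b(\cdot,x)$ and $\tilde l(\cdot,x)$ be uniformly continuous in $t$ on bounded sets of $x$ (cf.\ Remark \ref{rem:time_dependent_coeff}), which in turn rests on continuity of $t\mapsto DV(t,x)$. This continuity needs to be extracted from the a priori information already at hand: one knows that $V$ is uniformly continuous on $[0,\tau]\times B_R$ and that $V(t,\cdot)\in C^{1,1}$ with uniform constants, and a standard mollification in $x$ combined with the semiconcavity/semiconvexity estimates from Section \ref{sec:regularity} converts spatial $C^{1,1}$ plus joint continuity of $V$ into continuity of $DV$ on $[0,T]\times H$; together with Assumption \ref{infimumattained}(ii) this yields the required uniform continuity of $\tilde b,\tilde l$ in $t$. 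Once this technical hurdle is cleared, the identification $V=u$ and hence the optimal synthesis follow directly.
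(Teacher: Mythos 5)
Your proposal is correct and follows essentially the same route as the paper: show that $V$ is a viscosity solution of the linear Kolmogorov equation \eqref{reducedHJB}, then invoke uniqueness and the Feynman--Kac representation from \cite[Theorem 3.67]{fabbri2017} for that equation to conclude $V(t,x)=J(t,x;a^{\ast}(\cdot))$. The only divergence is the technical step of obtaining continuity of $DV$ in $t$ (needed for the uniform continuity of $\tilde b,\tilde l$): the paper establishes this in Lemma \ref{lemma:DV_locally_uniform_continuous} by a direct contradiction argument using only the uniform continuity of $V$ and the uniform $C^{1,\alpha}$ bounds, rather than the mollification--semiconcavity route you sketch.
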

We point out that this result also applies to deterministic problems, i.e., when $\sigma=0$.
\begin{proof}
Consider the linear equation \eqref{reducedHJB}.
Thanks to Lemma \ref{lemma:DV_locally_uniform_continuous} and by our assumptions, the functions $\tilde b(t,x),\tilde  l(t,x)$ are uniformly continuous on $[0,\tau] \times B_R$ for every $0<\tau<T, R>0$, and moreover 
\begin{equation*}
\|\tilde b(t,x)-\tilde b(t,y)\|_H\leq C\|x-y\|_H
\end{equation*}
for all $t\in[0,T]$, $x,y\in H$, and for every $R>0$ there is a modulus $\sigma_R$ such that
\begin{equation*}
|\tilde l(t,x)-\tilde l(t,y)|\leq \sigma_R(\|x-y\|_H)
\end{equation*}
for all $t\in[0,T]$, $x,y\in H$, $\|x\|_H,\|y\|_H\leq R$. Hence, we can apply \cite[Theorem 3.67]{fabbri2017} (with the control set being a singleton) to get the existence of a unique viscosity solution of \eqref{reducedHJB} in $S$, given by  the Feynman--Kac formula
\begin{equation}\label{eq:feynman_kac}
	v(t,x) = \mathbb{E} \left [ \int_t^T \tilde l(s,X(s)) \mathrm{d}s + g(X(T)) \right ],
\end{equation}
where $X(s)$ is the solution of
\begin{equation}\label{stateexplicitsolution}
\begin{cases}
	\mathrm{d}X(s) = [ A X(s) + \tilde b(s,X(s)) ] \mathrm{d}s + \sigma(X(s))\mathrm{d}W(s)\\
	X(t) = x.
\end{cases}
\end{equation}
We note that in order to apply the comparison theorem \cite[Theorem 3.54]{fabbri2017} (used in \cite[Theorem 3.67]{fabbri2017} to obtain uniqueness of viscosity solutions) to equation \eqref{reducedHJB}, it is enough that the functions $\tilde b$ and $\tilde l$ are uniformly continuous on $[0,\tau] \times B_R$ for every $0<\tau<T, R>0$, rather than on $[0,T] \times B_R$.
Since also $V$ is a viscosity solution of \eqref{reducedHJB}, by uniqueness of viscosity solutions of \eqref{reducedHJB} we have
    \begin{equation}\label{eq:Vrepresentation}
        V(t,x)=v(t,x) = \mathbb{E} \left [ \int_t^T l(X(s),\gamma(X(s),DV(s,X(s)))) \mathrm{d}s + g(X(T)) \right ].
    \end{equation}
This shows the optimality of the feedback control $a^{\ast}(\cdot)$.
\end{proof}
We give an example in which Assumption \ref{infimumattained} is satisfied.
\begin{example}\label{exampleinfimumattained}
   For simplicity we consider $\Lambda_0=\Lambda=H$  but we remark that this assumption could be relaxed. Let $l(x,a) = l_1(x) + l_2(a)$ and $b(x,a) = b(x) - a$, where $l_2\in C^{1,1}(H)$ is such that $l_2(a)-\nu\|a\|_H^2$ is convex for some $\nu>0$. Then
    \begin{equation*}
        \mathcal{F}(x,p,a) := \langle p, b(x) \rangle_H - \langle p,a\rangle_H + l_1(x) + l_2(a).
    \end{equation*}
    In this case, the infimum is attained at
        $a^{\ast} = Dl_2^{-1}(p)$
and Assumption \ref{infimumattained} is satisfied. Indeed, due to the convexity of $l_2(\cdot) - \nu \| \cdot \|_H^2$ and the differentiability of $l_2$, we have
	\begin{equation*}
		l_2(x) \geq l_2(y) + \langle Dl_2(y),x-y\rangle_H  - 2\nu \langle y,x-y\rangle_H
	\end{equation*}
	as well as
	\begin{equation*}
		l_2(y) \geq l_2(x) + \langle Dl_2(x),y-x\rangle_H  - 2\nu \langle x,y-x\rangle_H.
	\end{equation*}
	Adding these two inequalities yields
	\begin{equation}\label{injectivity}
		\langle Dl_2(x) - Dl_2(y),x-y\rangle_H  \geq 2\nu \|x-y\|_H^2.
	\end{equation}
	This implies that $Dl_2$ is injective. Moreover, since $l_2 - \nu \|\cdot \|_H^2$ is convex (Assumption \ref{lgsemiconvex}(i)), $Dl_2 - 2\nu \text{Id}$ is maximal monotone. Therefore, $Dl_2 -2\nu \text{Id} + \lambda \text{Id}$ is surjective for all $\lambda>0$. In particular, $Dl_2$ is surjective and thus bijective. Moreover, by \eqref{injectivity} we have
    \begin{equation*}
    \begin{split}
        \| Dl_2^{-1}(x) - Dl_2^{-1}(y) \|_H^2 &\leq \frac{1}{2\nu} \langle Dl_2^{-1}(x) - Dl_2^{-1}(y), x-y \rangle_H\leq \frac{1}{2\nu} \| Dl_2^{-1}(x) - Dl_2^{-1}(y) \|_H \|x-y\|_H.
    \end{split}
    \end{equation*}
    Therefore, $Dl_2^{-1}$ is Lipschitz continuous with Lipschitz constant $1/(2\nu)$. Thus, Assumption \ref{infimumattained} is satisfied.
\end{example}

\section{Case of Weak $B$-Condition}\label{weakBcase}

In this section, we investigate the regularity of the value function and perform optimal synthesis under the so-called weak $B$-condition (Assumption \ref{assumptionAw}). To do this, we need better regularity properties of the coefficients of the state equation \eqref{stateexplicitsolution}. 

\subsection{Lipschitz Continuity in $\|\cdot\|_{-1}$}\label{lipschitzw}
We will need the following assumptions.
\begin{assumption}\label{bsigmalipschitzfirstvariablew}
\begin{enumerate}[label=(\roman*)]
    \item The function $b$ is continuous on $H\times \Lambda_0$ and there exists a constant $C>0$ such that
    \begin{equation*}
        \|b(x,a)-b(x^{\prime},a) \|_H \leq C \|x-x^{\prime} \|_{-1}
    \end{equation*}
    for all $x,x^{\prime}\in H$ and $a\in\Lambda_0$.
    \item There exists a constant $C>0$ such that
    \begin{equation*}
        \|b(x,a)\|_H \leq C(1+\|x\|_H+\|a\|_{\Lambda} )
    \end{equation*}
    for all $x\in H$ and $a\in\Lambda_0$.
    \item The function $\sigma$ is continuous on $H\times \Lambda_0$ and there exists a constant $C>0$ such that
    \begin{equation*}
        \|\sigma(x,a)-\sigma(x^{\prime},a) \|_{L_2(\Xi,H)} \leq C \|x-x^{\prime} \|_{-1}
    \end{equation*}
    for all $x,x^{\prime}\in H$ and $a\in\Lambda_0$.
    \item There exists a constant $C>0$ such that
    \begin{equation*}
        \|\sigma(x,a)\|_{L_2(\Xi,H)} \leq C(1+\|x\|_H+\|a\|_{\Lambda} )
    \end{equation*}
    for all $x\in H$ and $a\in\Lambda_0$.
\end{enumerate}
\end{assumption}

\begin{assumption}\label{bsigmalipschitzw}
	\begin{enumerate}[label=(\roman*)]
		\item There exists a constant $C>0$ such that
		\begin{equation*}
			\| b(x,a) - b(x^{\prime},a^{\prime} ) \|_H \leq C (\|x-x^{\prime}\|_{-1} + \|a-a^{\prime}\|_{\Lambda} )
		\end{equation*}
		for all $x,x^{\prime}\in H$ and $a,a^{\prime}\in \Lambda_0$.
		\item There exists a constant $C>0$ such that
		\begin{equation*}
			\| \sigma(x,a) - \sigma(x^{\prime},a^{\prime} ) \|_{L_2(\Xi,H)} \leq C (\|x-x^{\prime}\|_{-1} + \|a-a^{\prime}\|_{\Lambda} )
		\end{equation*}
		for all $x,x^{\prime}\in H$ and $a,a^{\prime}\in \Lambda_0$.
	\end{enumerate}
\end{assumption}
We have the following equivalent of Lemma \ref{estimatex1x0one}. 
\begin{lemma}\label{estimatex1x0onew}
	For $x_0,x_1\in H$ and $a_0(\cdot),a_1(\cdot)\in \mathcal{U}_t$, define $X_0(s) = X(s;t,x_0,a_0(\cdot)),X_1(s)=X(s,t,x_1,a_1(\cdot))$. Then it holds:
	\begin{enumerate}[label=(\roman*)]
		\item Let Assumptions \ref{assumptionAw} and \ref{bsigmalipschitzfirstvariablew} be satisfied. Let $a_0(\cdot) = a_1(\cdot) = a(\cdot)\in \mathcal{U}_t$. Then, there are constants $C_1,C_2\geq 0$ independent of $T$, such that
		\begin{equation*}
			\mathbb{E} \left [ \sup_{s\in [t,T]} \| X_1(s) - X_0(s) \|_{-1}^{2} \right ] \leq C_1 \mathrm{e}^{C_2(T-t)}\|x_1 - x_0 \|_{-1}^{2}
		\end{equation*}
        for all $x_0,x_1\in H$ and $a(\cdot)\in \mathcal{U}_t$.
		\item Let Assumptions \ref{assumptionAw} and \ref{bsigmalipschitzw} be satisfied. Then, there are constants $C_1,C_2\geq 0$ independent of $T$, such that
		\begin{equation}\label{ew:-1weak}
			\mathbb{E} \left [ \sup_{s\in [t,T]} \| X_1(s) - X_0(s) \|_{-1}^{2} \right ] \leq C_1 \mathrm{e}^{C_2(T-t)} \left ( \|x_1 - x_0 \|_{-1}^{2} + \mathbb{E} \left [ \int_t^T \| a_1(s) - a_0(s) \|_{\Lambda}^2 \mathrm{d}s \right ] \right )
		\end{equation}
        for all $x_0,x_1\in H$ and let $a_0(\cdot), a_1(\cdot) \in \mathcal{U}_t$.
	\end{enumerate}
\end{lemma}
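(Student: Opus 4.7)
The plan is to mirror the proof of Lemma \ref{estimatex1x0one}, replacing the unweighted quadratic form $\|\cdot\|_H^2$ by the weighted form $\|\cdot\|_{-1}^2 = \langle B\cdot,\cdot\rangle_H$. I focus on part (ii), since part (i) differs only by the absence of the control-difference term. Writing $Y(s)=X_1(s)-X_0(s)$, I would apply the appropriate Itô-type formula for mild solutions, in the spirit of \cite[Proposition 1.166]{fabbri2017}, to the functional $x\mapsto \langle Bx,x\rangle_H$ evaluated along $Y$. Because $A^{\ast}B\in L(H)$ by Assumption \ref{assumptionAw}, the quantity $\langle A^{\ast}B Y(s),Y(s)\rangle_H$ is well defined even though $Y(s)$ need not lie in $D(A)$, and the formula reads
\begin{equation*}
\begin{split}
\|Y(s)\|_{-1}^2 &\leq \|x_1-x_0\|_{-1}^2 + 2\int_t^s \langle A^{\ast}BY(r),Y(r)\rangle_H\, \mathrm{d}r \\
&\quad + 2\int_t^s \langle BY(r),\, b(X_1(r),a_1(r))-b(X_0(r),a_0(r))\rangle_H\,\mathrm{d}r \\
&\quad + \int_t^s \bigl\|B^{1/2}(\sigma(X_1(r),a_1(r))-\sigma(X_0(r),a_0(r)))\bigr\|_{L_2(\Xi,H)}^2\,\mathrm{d}r + M(s),
\end{split}
\end{equation*}
where $M$ is a local martingale with quadratic variation controlled by the diffusion term.

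The weak $B$-condition is exactly what is needed to absorb the $A$-contribution: $-A^{\ast}B+c_0B\geq 0$ rewrites as $\langle A^{\ast}BY,Y\rangle_H \leq c_0\|Y\|_{-1}^2$. For the drift term I would use $|\langle BY,z\rangle_H|=|\langle B^{1/2}Y,B^{1/2}z\rangle_H|\leq \|Y\|_{-1}\|B^{1/2}\|_{L(H)}\|z\|_H$ together with Assumption \ref{bsigmalipschitzw}(i) and Young's inequality, producing a bound of the form $C(\|Y(r)\|_{-1}^2+\|a_1(r)-a_0(r)\|_{\Lambda}^2)$. The trace term is bounded similarly: boundedness of $B^{1/2}$ on $H$ plus Assumption \ref{bsigmalipschitzw}(ii) yield $\|B^{1/2}(\sigma_1-\sigma_0)\|_{L_2(\Xi,H)}^2\leq C(\|Y\|_{-1}^2+\|a_1-a_0\|_{\Lambda}^2)$.

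For the martingale piece I would invoke Burkholder--Davis--Gundy together with Young's inequality, exactly as in the proof of Lemma \ref{estimatex1x0one}(ii), to get, for any $t<T'\leq T$,
\begin{equation*}
\mathbb{E}\!\left[\sup_{s\in[t,T']}|M(s)|\right] \leq \tfrac{1}{4}\mathbb{E}\!\left[\sup_{s\in[t,T']}\|Y(s)\|_{-1}^2\right] + C\,\mathbb{E}\!\left[\int_t^{T'}\bigl(\|Y(r)\|_{-1}^2 + \|a_1(r)-a_0(r)\|_{\Lambda}^2\bigr)\,\mathrm{d}r\right].
\end{equation*}
Taking the supremum on $[t,T']$ and the expectation in the Itô inequality, absorbing the martingale contribution, and applying Gr\"onwall's inequality then yields \eqref{ew:-1weak}.

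The main obstacle is rigorously justifying the Itô-type formula for the weighted form $\langle B\cdot,\cdot\rangle_H$ evaluated along the mild solution $Y$, since $Y$ is only a mild solution and $A$ is unbounded. The resolution is to view the $A$-contribution as $\langle A^{\ast}BY,Y\rangle_H$, which is legitimate thanks to $A^{\ast}B\in L(H)$; the approximation argument underlying \cite[Proposition 1.166]{fabbri2017} then carries over essentially unchanged, with the weak $B$-condition $\langle A^{\ast}BY,Y\rangle_H\leq c_0\|Y\|_{-1}^2$ playing the role that contractivity $\langle AY,Y\rangle_H\leq 0$ plays in the unweighted case.
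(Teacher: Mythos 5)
Your proposal is correct and follows essentially the same route as the paper: the paper simply invokes \cite[Proposition 1.164]{fabbri2017} (the It\^o-type inequality already formulated for the weighted form $\langle B\cdot,\cdot\rangle_H$ with $A^{\ast}B\in L(H)$), uses the weak $B$-condition to absorb the $A$-contribution into $c_0\|\cdot\|_{-1}^2$, and then repeats the Gr\"onwall argument of Lemma \ref{estimatex1x0one} with Assumption \ref{bsigmalipschitzw}. Your drift, trace and Burkholder--Davis--Gundy estimates match the intended adjustments, so the only difference is that you re-justify the weighted It\^o inequality rather than citing it.
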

\begin{proof}
   The proof follows the proof of Lemma \ref{estimatex1x0one} so we only point out the necessary adjustments. We only discuss part $(ii)$. We now apply \cite[Proposition 1.164]{fabbri2017} and use Assumption \ref{assumptionAw} to get
	\begin{equation*}
		\begin{split}
			&\| X_1(s) - X_0(s) \|_{-1}^2\leq \| x_1 - x_0 \|_{-1}^2\\
			&\quad+ 2 \int_t^s (c_0\| X_1(r) - X_0(r) \|_{-1}^2+\langle b(X_1(r),a_1(r)) - b(X_0(r),a_0(r)) , B(X_1(r) - X_0(r)) \rangle_H) 
			\mathrm{d}r\\
			&\quad + \int_t^s \| B^{\frac{1}{2}}(\sigma(X_1(r),a_1(r)) - \sigma(X_0(r),a_0(r))) \|_{L_2(\Xi,H)}^2 \mathrm{d}r\\
			&\quad + 2 \int_t^s \langle B(X_1(r)-X_0(r)), (\sigma(X_1(r),a_1(r)) - \sigma(X_0(r),a_0(r))) \mathrm{d}W(r) \rangle_H.
		\end{split}
	\end{equation*}
	The rest of the proof proceeds as the proof of Lemma \ref{estimatex1x0one} however we now use Assumption \ref{bsigmalipschitzw}.
\end{proof}

Assumption \ref{lglipschitzfirstvariable} is now replaced by the following assumption.
\begin{assumption}\label{lglipschitzfirstvariablew}
\begin{enumerate}[label=(\roman*)]
    \item The function $l$ is continuous on $H\times\Lambda_0$ and there exists a constant $C>0$ such that
    \begin{equation*}
      |l(x,a) - l(x^{\prime},a)| \leq C \|x-x^{\prime}\|_{-1}
    \end{equation*}
    for all $x,x^{\prime}\in H$ and $a\in \Lambda_0$.
     \item There exists a constant $C>0$ such that
    \begin{equation*}
      |l(x,a)| \leq C (1+\|x\|_H+\|a\|_{\Lambda}^2)
    \end{equation*}
    for all $x\in H$ and $a\in \Lambda_0$.
    \item There exists a constant $C>0$ such that
    \begin{equation*}
        |g(x) - g(x^{\prime})| \leq C \|x-x^{\prime}\|_{-1}
    \end{equation*}
    for all $x,x^{\prime}\in H$.
    \item If $\Lambda_0$ is unbounded we assume that $l$ and $g$ are bounded from below.
\end{enumerate}
\end{assumption}

The value function now is Lipschitz continuous in $x$ in the $\|\cdot\|_{-1}$ norm:

\begin{theorem}\label{th:V_lipw}
    Let Assumptions \ref{assumptionAw}, \ref{bsigmalipschitzfirstvariablew} and \ref{lglipschitzfirstvariablew} be satisfied. Then there are constants $C_1,C_2$ independent of $T$, such that
	\begin{equation*}
		|V(t,x) - V(t,y)| \leq C_1 \mathrm{e}^{C_2(T-t)} \|x-y\|_{-1}
	\end{equation*}
	for all $t\in [0,T]$ and $x,y\in H$.
\end{theorem}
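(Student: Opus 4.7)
The plan is to mirror the proof of Theorem \ref{th:V_lip} almost verbatim, simply replacing the $\|\cdot\|_H$ norm by the $\|\cdot\|_{-1}$ norm everywhere and invoking Lemma \ref{estimatex1x0onew}(i) in place of Lemma \ref{estimatex1x0one}(i). Concretely, I would fix $t\in[0,T]$, $x,y\in H$ and an arbitrary admissible control $a(\cdot)\in\mathcal{U}_t$. Let $X_0(\cdot)=X(\cdot;t,x,a(\cdot))$ and $X_1(\cdot)=X(\cdot;t,y,a(\cdot))$ be the two state trajectories driven by the \emph{same} control, so that the control-integrability term in \eqref{ew:-1weak} disappears.

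Next, using the Lipschitz continuity of $l(\cdot,a)$ and $g$ in the $\|\cdot\|_{-1}$ norm provided by Assumption \ref{lglipschitzfirstvariablew}(i) and (iii), I would estimate the difference of the cost functionals by
\begin{equation*}
\begin{split}
|J(t,x;a(\cdot))-J(t,y;a(\cdot))|
&\leq C\,\mathbb{E}\!\left[\int_t^T\|X_1(s)-X_0(s)\|_{-1}\,\mathrm{d}s+\|X_1(T)-X_0(T)\|_{-1}\right]\\
&\leq C(T-t+1)\,\mathbb{E}\!\left[\sup_{s\in[t,T]}\|X_1(s)-X_0(s)\|_{-1}^{2}\right]^{\!1/2}\!,
\end{split}
\end{equation*}
where the second inequality follows from the Cauchy--Schwarz inequality in the probability and time variables. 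Applying Lemma \ref{estimatex1x0onew}(i), the right-hand side is bounded by $C_1\mathrm{e}^{C_2(T-t)}\|x-y\|_{-1}$ for constants $C_1,C_2\geq 0$ that do not depend on $T$, $a(\cdot)$, $x$ or $y$ (here I may need to absorb the polynomial factor $T-t+1$ into the exponential, which only changes $C_1$ and $C_2$).

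Since this bound is uniform in $a(\cdot)\in\mathcal{U}_t$, taking the infimum over admissible controls on both sides and using the symmetric roles of $x$ and $y$ gives the claimed Lipschitz estimate for $V(t,\cdot)$. There is no real obstacle here: the only ingredients are the $\|\cdot\|_{-1}$-Lipschitz continuity of the running and terminal costs (Assumption \ref{lglipschitzfirstvariablew}), the $\|\cdot\|_{-1}$-stability estimate for the state equation (Lemma \ref{estimatex1x0onew}(i), which is itself where the weak $B$-condition and the $\|\cdot\|_{-1}$-Lipschitzness of $b,\sigma$ enter), and the fact that the value function is well-defined and finite by Assumption \ref{lglipschitzfirstvariablew}(ii)--(iv). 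The only minor subtlety worth noting is that we must carry the second moment of $\|X_1-X_0\|_{-1}$ rather than the first moment through the Burkholder step that is internal to Lemma \ref{estimatex1x0onew}, which is precisely why that lemma is stated in the $L^2$ form.
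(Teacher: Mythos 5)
Your proposal is correct and matches the paper's argument: the paper simply states that the proof of Theorem \ref{th:V_lipw} repeats the proof of Theorem \ref{th:V_lip} with the $\|\cdot\|_H$ estimates replaced by the $\|\cdot\|_{-1}$ estimates of Lemma \ref{estimatex1x0onew} (with the same control for both trajectories, so the control term in \eqref{ew:-1weak} vanishes and part (i) suffices, exactly as you use it). Your explicit Cauchy--Schwarz step to pass from the second moment in the lemma to the first moment in the cost estimate is the same (implicit) step used in the proof of Theorem \ref{th:V_lip}.
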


The proof of Theorem \ref{th:V_lipw} is the same as the proof of Theorem \ref{th:V_lip} if one uses \eqref{ew:-1weak} so it is omitted.

\subsection{Semiconcavity in $H_{-1}$}\label{semiconcavityw}

We now show that $V$ extends to a function which is semiconcave in $H_{-1}$. We need to strengthen Assumptions \ref{bsigmafrechetfirstvariable} and \ref{bsigmafrechet}. We point out that Assumption \ref{bsigmafrechetw} is not needed in this subsection.
\begin{assumption}\label{bsigmafrechetfirstvariablew}
\begin{enumerate}[label=(\roman*)]
    \item Let $b:H\times\Lambda \to H$ be Fr\'echet differentiable in the first variable and let there be a constant $C>0$ such that
    \begin{equation*}
        \| (D_xb(x,a) - D_xb(x^{\prime},a))(x-x^{\prime}) \|_{-1} \leq C \|x-x^{\prime}\|_{-1}
    \end{equation*}
    for all $x,x^{\prime}\in H$ and $a\in \Lambda_0$.
    \item Let $\sigma:H\times\Lambda \to L_2(\Xi,H)$ be Fr\'echet differentiable in the first variable and let there be a constant $C>0$ such that
    \begin{equation*}
        \| (D_x\sigma(x,a) - D_x\sigma(x^{\prime},a))(x-x^{\prime}) \|_{L_2(\Xi,H_{-1})} \leq C \|x-x^{\prime}\|_{-1}
    \end{equation*}
    for all $x,x^{\prime}\in H$ and $a\in \Lambda_0$.
\end{enumerate}
\end{assumption}

\begin{assumption}\label{bsigmafrechetw}
	\begin{enumerate}[label=(\roman*)]
		\item Let $b:H\times\Lambda\to H$ be Fr\'echet differentiable and let there be a constant $C>0$ such that
        \begin{equation*}
        \begin{split}
            &\| (D_x b(x,a) - D_x b(x^{\prime},a^{\prime}))(x-x^{\prime}) + ( D_a b(x,a) - D_a b(x^{\prime},a^{\prime}))(a-a^{\prime})\|_{-1}\\
            &\leq C \left ( \|x-x^{\prime}\|^2_{-1} + \| a-a^{\prime} \|_{\Lambda}^2 \right )
        \end{split}
        \end{equation*}
        for all $x,x^{\prime}\in H$, $a,a^{\prime}\in \Lambda_0$.
		\item Let $\sigma:H\to L_2(\Xi,H)$ be Fr\'echet differentiable and let there be a constant $C>0$ such that
		\begin{equation*}
			\| (D\sigma(x) - D\sigma(x^{\prime}))(x-x^{\prime})\|_{L_2(\Xi,H_{-1})} \leq C \|x-x^{\prime}\|_{-1}^2
		\end{equation*}
		for all $x,x^{\prime}\in H$.
	\end{enumerate}
\end{assumption}

For given $x_0,x_1\in H$ and $a_0(\cdot), a_1(\cdot)\in \mathcal{U}_t$, let $X_0$ and $X_1$ be given by \eqref{x1x0differentcontrols}, and for $\lambda\in [0,1]$ we define $a_{\lambda}(\cdot), x_{\lambda}, X_{\lambda}, X^{\lambda}$ as in
\eqref{lambdadefinition}. We now have the following version of Lemma \ref{estimatexlambdasamecontrol}:

\begin{lemma}\label{estimatexlambdasamecontrolw}
\begin{enumerate}[label=(\roman*)]
	\item Let Assumptions \ref{assumptionAw}, \ref{bsigmalipschitzfirstvariablew} and \ref{bsigmafrechetfirstvariablew} be satisfied. Let $a_0(\cdot) = a_1(\cdot) = a(\cdot)\in \mathcal{U}_t$. There are constants $C_1,C_2\geq 0$ independent of $T$ where $C_2$ depends only on the constant $C_2$ in Lemma \ref{estimatex1x0onew} and $c_0$ in Assumption \ref{assumptionAw}, such that
	\begin{equation*}
		\mathbb{E} \left [ \sup_{s\in [t,T]} \| X^{\lambda}(s) - X_{\lambda}(s) \|_{-1} \right ] \leq C_1 \mathrm{e}^{C_2(T-t)} \lambda (1-\lambda) \|x_1-x_0\|_{-1}^2
	\end{equation*}
    for all $\lambda \in [0,1]$, $x_0,x_1\in H$ and $a(\cdot)\in \mathcal{U}_t$.
	\item Let Assumptions \ref{assumptionAw}, \ref{bsigmalipschitzw} and \ref{bsigmafrechetw} be satisfied. There are constants $C_1,C_2\geq 0$, where $C_2$ depends only on the constant $C_2$ in Lemma \ref{estimatex1x0onew} and $c_0$ in Assumption \ref{assumptionAw}, such that
	\begin{equation*}
			\mathbb{E} \left [ \sup_{s\in [t,T]} \| X^{\lambda}(s) - X_{\lambda}(s) \|_{-1} \right ] \leq C_1 \mathrm{e}^{C_2(T-t)} \lambda (1-\lambda) \left ( \|x_1-x_0\|_{-1}^2 + \mathbb{E} \left [ \int_t^T \|a_0(s) - a_1(s) \|_{\Lambda}^2 \mathrm{d}s \right ] \right )
	\end{equation*}
    for all $\lambda \in [0,1]$, $x_0,x_1\in H$ and $a_0(\cdot),a_1(\cdot) \in \mathcal{U}_t$.
\end{enumerate}
\end{lemma}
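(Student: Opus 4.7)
My plan is to mimic the proof of Lemma \ref{estimatexlambdasamecontrol}, but systematically replacing the $H$-norm with the $\|\cdot\|_{-1}$ norm and using the It\^o formula \cite[Proposition 1.164]{fabbri2017} (appropriate for the weak $B$-condition) in place of \cite[Proposition 1.166]{fabbri2017}. As for part $(i)$ of Lemma \ref{estimatex1x0onew}, I will only carry out part $(ii)$ in detail; part $(i)$ follows along the same lines with $a_0(\cdot)=a_1(\cdot)$ and Assumption \ref{bsigmafrechetfirstvariablew} in place of \ref{bsigmafrechetw}.

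First I would apply \cite[Proposition 1.164]{fabbri2017} to the process $X_\lambda(s)-X^\lambda(s)$ in order to obtain an It\^o-type expansion of $\|X_\lambda(s)-X^\lambda(s)\|_{-1}^2$. Under the weak $B$-condition (Assumption \ref{assumptionAw}), this yields an analogue of \eqref{itosformula4} where the $\langle\cdot,\cdot\rangle_H$ pairings are replaced by $\langle B\cdot,\cdot\rangle_H$ pairings, the stochastic integrand involves $B^{\frac12}\sigma$ rather than $\sigma$, and an additional growth term $2c_0\|X_\lambda(s)-X^\lambda(s)\|_{-1}^2$ appears from the dissipativity estimate $\langle A^{\ast}Bx,x\rangle_H\leq c_0\|x\|_{-1}^2$. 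This extra term is harmless: it will simply be absorbed in the Gr\"onwall step, contributing an extra $2c_0$ to the exponent.

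Next, I would decompose
\begin{equation*}
\lambda b(X_1(s),a_1(s))+(1-\lambda)b(X_0(s),a_0(s))-b(X^\lambda(s),a_\lambda(s))
\end{equation*}
by the Taylor-type identity \eqref{estimateb1}, and the analogous decomposition for $\sigma$. Applying Assumption \ref{bsigmafrechetw}(i) at the points $(\bar X_1(\theta),\bar a_1(\theta))$ and $(\bar X_0(\theta),\bar a_0(\theta))$ (whose differences are $\theta(X_1-X_0)$ and $\theta(a_1-a_0)$), dividing out $\theta$, and integrating in $\theta\in[0,1]$ yields the crucial quadratic bound
\begin{equation*}
\bigl\|\lambda b(X_1(s),a_1(s))+(1-\lambda)b(X_0(s),a_0(s))-b(X^\lambda(s),a_\lambda(s))\bigr\|_{-1}\leq C\lambda(1-\lambda)\bigl(\|X_1(s)-X_0(s)\|_{-1}^2+\|a_1(s)-a_0(s)\|_\Lambda^2\bigr),
\end{equation*}
and Assumption \ref{bsigmafrechetw}(ii) similarly produces the $L_2(\Xi,H_{-1})$ analogue of \eqref{estimatesigma}. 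These replace \eqref{estimateb} and \eqref{estimatesigma} in the sequel.

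With these ingredients in hand, I would take absolute values and square roots in the It\^o expansion, take supremum over $[t,T']$, and then take expectations, estimating the stochastic integral by Burkholder--Davis--Gundy (using the $L_2(\Xi,H_{-1})$-norm of $B^{\frac12}\sigma$-differences) and the pathwise integrals via Cauchy--Schwarz, exactly as in the four ``$\frac{1}{16}\mathbb{E}[\sup\|\cdot\|_{-1}]$'' absorption steps of the proof of Lemma \ref{estimatexlambdasamecontrol}. Invoking Lemma \ref{estimatex1x0onew}(ii) to control $\mathbb{E}[\sup_{s\in[t,T]}\|X_1(s)-X_0(s)\|_{-1}^2]$ by $C_1 \mathrm{e}^{C_2(T-t)}(\|x_1-x_0\|_{-1}^2+\mathbb{E}\int_t^T\|a_1(s)-a_0(s)\|_\Lambda^2 ds)$, and finally applying Gr\"onwall's inequality to the resulting inequality for $\mathbb{E}[\sup_{s\in[t,T']}\|X_\lambda(s)-X^\lambda(s)\|_{-1}]$, closes the estimate and yields the claimed bound. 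The main obstacle is purely organisational: one must keep track of the fact that the Lipschitz constants in Assumptions \ref{bsigmalipschitzw} and \ref{bsigmafrechetw} are written in the $\|\cdot\|_{-1}$ norm on the domain but the $H$-norm on the range, and similarly verify that the $B^{\frac12}$ weights coming from \cite[Proposition 1.164]{fabbri2017} are compatible with Assumption \ref{bsigmafrechetw}(ii) so that the $L_2(\Xi,H_{-1})$-estimates close cleanly; no new analytic idea beyond the proof of Lemma \ref{estimatexlambdasamecontrol} is required.
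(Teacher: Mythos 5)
Your proposal follows essentially the same route as the paper's proof: replace \cite[Proposition 1.166]{fabbri2017} by \cite[Proposition 1.164]{fabbri2017}, absorb the extra $2c_0\int_t^s\|X_\lambda(r)-X^\lambda(r)\|_{-1}^2\,\mathrm{d}r$ term into the Gr\"onwall step, derive the $\|\cdot\|_{-1}$ and $L_2(\Xi,H_{-1})$ analogues of \eqref{estimateb} and \eqref{estimatesigma} from the Taylor identity \eqref{estimateb1} together with Assumption \ref{bsigmafrechetw}, and then repeat the absorption/Burkholder--Davis--Gundy/Lemma \ref{estimatex1x0onew}(ii) argument of Lemma \ref{estimatexlambdasamecontrol}. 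The only nitpick is your closing remark that Assumption \ref{bsigmafrechetw} measures the range in the $H$-norm --- it is in fact stated in the $\|\cdot\|_{-1}$ (resp.\ $L_2(\Xi,H_{-1})$) norm, which is exactly what makes the estimates close, and your displayed quadratic bound already uses this correctly.
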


\begin{proof}
The proof is similar to the proof of Lemma \ref{estimatexlambdasamecontrol} with little modifications. Repeating its steps and using Assumption \ref{bsigmafrechetw}, instead of \eqref{estimateb} we now have
\begin{equation*}
	\begin{split}
		&\| \lambda b(X_1(s),a_1(s)) +(1-\lambda) b(X_0(s),a_0(s)) - b(X^{\lambda}(s),a_{\lambda}(s)) \|_{-1}\\
		&\leq C \lambda (1-\lambda) \left ( \|X_1(s) - X_0(s)\|_{-1}^2 + \|a_1(s) -a_0(s)\|_{\Lambda}^2 \right )
	\end{split}
\end{equation*}
and in place of \eqref{estimatesigma},
\begin{equation*}
	\| \lambda \sigma(X_1(s)) +(1-\lambda) \sigma(X_0(s)) - \sigma(X^{\lambda}(s)) \|_{L_2(\Xi,H_{-1})} 
	\leq C \lambda (1-\lambda) \|X_1(s) - X_0(s)\|_{-1}^2.
\end{equation*}
Then, applying \cite[Proposition 1.164]{fabbri2017} and Assumption \ref{assumptionAw}, we obtain in place of \eqref{itosformula4}
\begin{equation}\label{itosformula4w}
	\begin{split}
		&\| X^{\lambda}(s) - X_{\lambda}(s) \|_{-1}^2\leq 2c_0 \int_t^s \| X^{\lambda}(r) - X_{\lambda}(r) \|_{-1}^2 \mathrm{d}r\\
		&+ 2 \int_t^s \langle \lambda b(X_1(r),a_1(r)) +(1-\lambda) b(X_0(r),a_0(r)) - b(X^{\lambda}(r),a_\lambda(r)) , X^{\lambda}(r) - X_{\lambda}(r) \rangle_{-1} \mathrm{d}r\\
        &\quad +2 \int_t^s \langle b(X^{\lambda}(r),a_{\lambda}(r)) - b(X_{\lambda}(r),a_{\lambda}(r)), X^{\lambda}(r) - X_{\lambda}(r) \rangle_{-1} \mathrm{d}r\\
		&\quad + 2 \int_t^s \| ( \lambda \sigma(X_1(r)) +(1-\lambda) \sigma(X_0(r)) - \sigma(X^{\lambda}(r))) \|_{L_2(\Xi,H_{-1})}^2 \mathrm{d}r\\
        &\quad +2\int_t^s \| \sigma(X^{\lambda}(r) - \sigma(X_{\lambda}(r) \|_{L_2(\Xi,H_{-1})}^2 \mathrm{d}r\\
		&\quad + 2 \int_t^s \left \langle X^{\lambda}(r) - X_{\lambda}(r), ( \lambda \sigma(X_1(r)) +(1-\lambda) \sigma(X_0(r)) - \sigma(X^{\lambda}(r)) ) \mathrm{d}W(r)\right \rangle_{-1}\\
        &\quad + 2 \int_t^s \left \langle X^{\lambda}(r) - X_{\lambda}(r), ( \sigma(X^{\lambda}(r)) - \sigma(X_{\lambda}(r)) ) \mathrm{d}W(r)\right \rangle_{-1}.
	\end{split}
\end{equation}
Now, the proof follows along the same lines as the proof of Lemma \ref{estimatexlambdasamecontrol}.
\end{proof}

Finally, we strengthen Assumption \ref{lgsemiconcave} to the following one.
\begin{assumption}\label{lgsemiconcavew}
    \begin{enumerate}[label=(\roman*)]
        \item Let $l(\cdot,a)$ be semiconcave in $H_{-1}$ uniformly in $a\in \Lambda_0$.
        \item Let $g$ be semiconcave in $H_{-1}$.
    \end{enumerate}
\end{assumption}

\begin{theorem}\label{th:semiconcw}
    Let Assumptions \ref{assumptionAw}, \ref{bsigmalipschitzfirstvariablew}, \ref{lglipschitzfirstvariablew}, \ref{bsigmafrechetfirstvariablew} and \ref{lgsemiconcavew} be satisfied. Then, for every $t\in[0,T]$, the function $V(t,\cdot)$ is semiconcave in $H_{-1}$ with semiconcavity constant $C_1 \mathrm{e}^{C_2(T-t)}$ for some $C_1,C_2\geq 0$ independent of $t,T$.
\end{theorem}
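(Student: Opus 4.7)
The plan is to mirror the proof of Theorem \ref{th:semiconc} line by line, but to carry out every estimate in the $\|\cdot\|_{-1}$ norm rather than in the $H$ norm, using Lemmas \ref{estimatex1x0onew}(i) and \ref{estimatexlambdasamecontrolw}(i) in place of their $H$-counterparts. As in the $H$ case, it is enough to show that for every $a(\cdot)\in\mathcal{U}_t$ the functional $J(t,\cdot;a(\cdot))$ is semiconcave in $H_{-1}$ with semiconcavity constant $C_1\mathrm{e}^{C_2(T-t)}$ independent of $a(\cdot)$, since the infimum of a uniformly semiconcave family of functions is semiconcave with the same constant.

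Fix $x_0,x_1\in H$, $\lambda\in[0,1]$, $a(\cdot)\in\mathcal{U}_t$, and define $X_0,X_1$ by \eqref{x1x0differentcontrols} and $x_\lambda, X_\lambda, X^\lambda$ by \eqref{lambdadefinition} with $a_0(\cdot)=a_1(\cdot)=a(\cdot)$ (so $a_\lambda(\cdot)=a(\cdot)$). Then I would write
\begin{equation*}
\begin{split}
&\lambda J(t,x_1;a(\cdot))+(1-\lambda)J(t,x_0;a(\cdot))-J(t,x_\lambda;a(\cdot))\\
&= \mathbb{E}\left[\int_t^T \bigl(\lambda l(X_1(s),a(s))+(1-\lambda)l(X_0(s),a(s))-l(X^\lambda(s),a(s))\bigr)\mathrm{d}s\right]\\
&\quad + \mathbb{E}\left[\int_t^T \bigl(l(X^\lambda(s),a(s))-l(X_\lambda(s),a(s))\bigr)\mathrm{d}s\right]\\
&\quad + \mathbb{E}\bigl[\lambda g(X_1(T))+(1-\lambda)g(X_0(T))-g(X^\lambda(T))\bigr]+\mathbb{E}\bigl[g(X^\lambda(T))-g(X_\lambda(T))\bigr].
\end{split}
\end{equation*}
For the terms involving $X^\lambda$ (the convex combination), Assumption \ref{lgsemiconcavew} gives
\begin{equation*}
\lambda l(X_1(s),a(s))+(1-\lambda)l(X_0(s),a(s))-l(X^\lambda(s),a(s))\leq C\lambda(1-\lambda)\|X_1(s)-X_0(s)\|_{-1}^2,
\end{equation*}
and analogously for $g$. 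For the correction terms involving $X^\lambda-X_\lambda$, Assumption \ref{lglipschitzfirstvariablew} yields
\begin{equation*}
|l(X^\lambda(s),a(s))-l(X_\lambda(s),a(s))|\leq C\|X^\lambda(s)-X_\lambda(s)\|_{-1},
\end{equation*}
and likewise for $g$.

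Taking expectations and passing the supremum outside, Lemma \ref{estimatex1x0onew}(i) controls the first and third groups by
\begin{equation*}
C(T-t+1)\,\mathbb{E}\!\left[\sup_{s\in[t,T]}\|X_1(s)-X_0(s)\|_{-1}^{2}\right]\leq C_1\mathrm{e}^{C_2(T-t)}\|x_1-x_0\|_{-1}^{2},
\end{equation*}
while Lemma \ref{estimatexlambdasamecontrolw}(i) bounds the second and fourth groups by
\begin{equation*}
C(T-t+1)\,\mathbb{E}\!\left[\sup_{s\in[t,T]}\|X^\lambda(s)-X_\lambda(s)\|_{-1}\right]\leq C_1\mathrm{e}^{C_2(T-t)}\lambda(1-\lambda)\|x_1-x_0\|_{-1}^{2}.
\end{equation*}
Adding these two bounds and multiplying out the factor $\lambda(1-\lambda)$ (which is already present in the second bound and can be absorbed into a larger constant on the first if one uses $\lambda(1-\lambda)\leq 1/4$ in reverse by simply noting that the semiconcavity inequality only needs an upper bound in terms of $\lambda(1-\lambda)\|x_1-x_0\|_{-1}^2$), gives the required estimate.

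The only delicate point is the correct bookkeeping of constants, since the estimate from Lemma \ref{estimatex1x0onew}(i) does not carry the $\lambda(1-\lambda)$ prefactor; however, because the semiconcavity term it controls already carries an explicit $\lambda(1-\lambda)$ inside the integral, the factor is preserved under expectation, and the whole argument reduces to the same structure as in Theorem \ref{th:semiconc}. There is no substantively new difficulty beyond verifying that Assumptions \ref{bsigmafrechetfirstvariablew} and \ref{assumptionAw} are indeed the precise hypotheses required to apply Lemma \ref{estimatexlambdasamecontrolw}(i), which is the part I would double-check most carefully.
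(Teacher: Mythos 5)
Your proposal is correct and matches the paper's argument, which simply repeats the proof of Theorem \ref{th:semiconc} with the $\|\cdot\|_H$ norm replaced by $\|\cdot\|_{-1}$ and with Lemma \ref{estimatex1x0onew}(i) and Lemma \ref{estimatexlambdasamecontrolw}(i) in place of their $H$-counterparts. Your handling of the $\lambda(1-\lambda)$ prefactor is also right: it enters through Assumption \ref{lgsemiconcavew} before the expectation is taken, so Lemma \ref{estimatex1x0onew}(i) need not supply it.
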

\begin{proof}
The proof repeats the steps of the proof of Theorem \ref{th:semiconc}, now using the new assumptions, and Lemma \ref{estimatex1x0onew}(i) and Lemma \ref{estimatexlambdasamecontrolw}(i) instead of Lemma \ref{estimatex1x0one}(i) and Lemma \ref{estimatexlambdasamecontrol}(i).
\end{proof}

\subsection{Semiconvexity}\label{semiconvexityw}

In this subsection, we show some cases when the value function is semiconvex. Since the classical Nemytskii operator does not satisfy Assumption \ref{bsigmalipschitzfirstvariablew}, we will only consider versions of Cases 1 and 2 from Section \ref{semiconvexity}. However, see Remark \ref{rem:case3_delay} for Case 3 in the context of delay equations.

\subsubsection{Uniform Convexity of Running Cost in The Control Variable}

Assumption \ref{lgsemiconvex} is now changed to Assumption \ref{lgsemiconvexw}  below.
\begin{assumption}\label{lgsemiconvexw}
	\begin{enumerate}[label=(\roman*)]
		\item There exist constants $C,\nu \geq 0$ such that the map
		\begin{equation*}
			H\times\Lambda_0 \ni (x,a)\mapsto l(x,a) + C\|x\|_{-1}^2 - \nu \|a\|^2_{\Lambda}
		\end{equation*}
		is convex.
    \item Let $g:H\to\mathbb{R}$ be semiconvex in $H_{-1}$.
	\end{enumerate}
\end{assumption}

\begin{theorem}\label{convexity1w}
	Let Assumptions \ref{assumptionAw}, \ref{bsigmalipschitzw}, \ref{lglipschitzfirstvariablew}, \ref{bsigmafrechetw} and \ref{lgsemiconvexw} be satisfied. Then there exists a constant $\nu_0$ depending only on the data of the problem such that if $\nu\geq \nu_0$, then $V(t,\cdot)$ is semiconvex in $H_{-1}$ with constant $C_1\mathrm{e}^{C_2T}$.
\end{theorem}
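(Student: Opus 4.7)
The plan is to mirror the argument used in the proof of Theorem~\ref{convexity1}, replacing everywhere the $\|\cdot\|_H$--based estimates by the corresponding $\|\cdot\|_{-1}$--based ones provided by Lemmas~\ref{estimatex1x0onew} and \ref{estimatexlambdasamecontrolw}. Fix $t\in[0,T]$, $x_0,x_1\in H$ and $\varepsilon>0$. Pick $\varepsilon$-optimal controls $a_0^\varepsilon(\cdot),a_1^\varepsilon(\cdot)\in\mathcal{U}_t$ for $V(t,x_0),V(t,x_1)$, set $a_\lambda^\varepsilon(\cdot)=\lambda a_1^\varepsilon(\cdot)+(1-\lambda)a_0^\varepsilon(\cdot)$, $x_\lambda=\lambda x_1+(1-\lambda)x_0$, and define $X_0,X_1,X^\lambda,X_\lambda$ as in \eqref{x1x0differentcontrols}--\eqref{lambdadefinition}. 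Exactly as in \eqref{semiconvexityinequality}, I would obtain
\begin{equation*}
\begin{split}
\varepsilon+\lambda V(t,x_1)+(1-\lambda)V(t,x_0)-V(t,x_\lambda)
&\geq \mathbb{E}\!\left[\int_t^T\!\!\big(\lambda l(X_1,a_1^\varepsilon)+(1-\lambda)l(X_0,a_0^\varepsilon)-l(X^\lambda,a_\lambda^\varepsilon)\big)\mathrm{d}s\right]\\
&\quad+\mathbb{E}\!\left[\int_t^T\!\!\big(l(X^\lambda,a_\lambda^\varepsilon)-l(X_\lambda,a_\lambda^\varepsilon)\big)\mathrm{d}s\right]\\
&\quad+\mathbb{E}\!\left[\lambda g(X_1(T))+(1-\lambda)g(X_0(T))-g(X^\lambda(T))\right]\\
&\quad+\mathbb{E}\!\left[g(X^\lambda(T))-g(X_\lambda(T))\right].
\end{split}
\end{equation*}

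Next, I would apply Assumption~\ref{lgsemiconvexw}(i)--(ii) to the first and third expectations: the convexity of $l(x,a)+C\|x\|_{-1}^2-\nu\|a\|_\Lambda^2$ yields the pointwise bound
\begin{equation*}
\lambda l(X_1,a_1^\varepsilon)+(1-\lambda)l(X_0,a_0^\varepsilon)-l(X^\lambda,a_\lambda^\varepsilon)\geq -C\lambda(1-\lambda)\|X_1-X_0\|_{-1}^2+\nu\lambda(1-\lambda)\|a_1^\varepsilon-a_0^\varepsilon\|_\Lambda^2,
\end{equation*}
and the semiconvexity of $g$ in $H_{-1}$ gives the analogous bound for the terminal term. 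For the two ``correction'' expectations, I would invoke the Lipschitz continuity of $l(\cdot,a)$ and $g$ in the $\|\cdot\|_{-1}$ norm (Assumption~\ref{lglipschitzfirstvariablew}(i),(iii)) to estimate them by $C\,\mathbb{E}[\sup_{s\in[t,T]}\|X^\lambda(s)-X_\lambda(s)\|_{-1}]$, and then apply Lemma~\ref{estimatexlambdasamecontrolw}(ii), which controls this quantity by $C_1\mathrm{e}^{C_2(T-t)}\lambda(1-\lambda)\big(\|x_1-x_0\|_{-1}^2+\mathbb{E}\int_t^T\|a_1^\varepsilon-a_0^\varepsilon\|_\Lambda^2\mathrm{d}s\big)$.

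Collecting the estimates and using Lemma~\ref{estimatex1x0onew}(ii) to bound $\mathbb{E}[\sup_{s\in[t,T]}\|X_1(s)-X_0(s)\|_{-1}^2]$ by the same quantity, I arrive at
\begin{equation*}
\varepsilon+\lambda V(t,x_1)+(1-\lambda)V(t,x_0)-V(t,x_\lambda)\geq -C_1\mathrm{e}^{C_2(T-t)}\lambda(1-\lambda)\|x_1-x_0\|_{-1}^2+(\nu-\nu_0)\lambda(1-\lambda)\,\mathbb{E}\!\int_t^T\!\!\|a_1^\varepsilon-a_0^\varepsilon\|_\Lambda^2\mathrm{d}s,
\end{equation*}
where $\nu_0=C_1\mathrm{e}^{C_2T}$ is the constant coming from the control-dependent error terms (and $C_1,C_2$ depend only on the data, via Lemmas~\ref{estimatex1x0onew} and \ref{estimatexlambdasamecontrolw}). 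Choosing $\nu\geq\nu_0$ makes the last term non-negative and can be dropped, and letting $\varepsilon\to 0$ yields semiconvexity of $V(t,\cdot)$ in $H_{-1}$ with constant $C_1\mathrm{e}^{C_2T}$, as claimed.

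There is no real obstacle beyond bookkeeping: the substantive work has already been carried out in establishing Lemmas~\ref{estimatex1x0onew} and \ref{estimatexlambdasamecontrolw} in the $\|\cdot\|_{-1}$ norm under the weak $B$-condition. The only point requiring minor care is the absorption step: the constant $\nu_0$ must be chosen large enough to dominate \emph{all} the control-dependent contributions arising from the two correction expectations and the Lemma~\ref{estimatex1x0onew}(ii) bound, which is the reason the threshold $\nu_0$ depends on $T$ through the exponential factor $\mathrm{e}^{C_2T}$.
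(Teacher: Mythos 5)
Your proposal is correct and follows essentially the same route as the paper, which itself simply states that the proof of Theorem \ref{convexity1} carries over with the obvious changes, using Lemma \ref{estimatex1x0onew}(ii) and Lemma \ref{estimatexlambdasamecontrolw}(ii) in place of their $\|\cdot\|_H$ counterparts. Your write-up just makes explicit the bookkeeping that the paper leaves implicit, including the correct identification of the threshold $\nu_0=C_1\mathrm{e}^{C_2T}$ needed to absorb the control-dependent error terms.
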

\begin{proof}
The proof is analogous to the proof of Theorem \ref{convexity1} with obvious changes due to the new assumptions, and using Lemma \ref{estimatex1x0onew}(ii) and Lemma \ref{estimatexlambdasamecontrolw}(ii) instead of Lemma \ref{estimatex1x0one}(ii) and Lemma \ref{estimatexlambdasamecontrol}(ii).
\end{proof}

\subsubsection{Linear State Equation and Convex Costs}

In this case we need to enhance Assumption \ref{blinearlgconvex} to the following one.
\begin{assumption}\label{blinearlgconvexw}
	\begin{enumerate}[label=(\roman*)]
		\item The functions $b,\sigma$ extend to affine and continuous functions $b:H_{-1}\times\Lambda\to H$ and $\sigma: H_{-1} \times \Lambda \to L_2(\Xi,H)$.
		\item The functions $l:H\times\Lambda_0\to\mathbb{R}$ and $g:H\to\mathbb{R}$ are convex.
	\end{enumerate}
\end{assumption}

The proof of Theorem \ref{convexity2w} is the same as that of Theorem \ref{convexity2}.
\begin{theorem}\label{convexity2w}
	Let Assumptions \ref{assumptionAw}, \ref{lglipschitzfirstvariablew} and  \ref{blinearlgconvexw} be satisfied. Then, for every $t\in [0,T]$, the function $V(t,\cdot)$ is convex.
\end{theorem}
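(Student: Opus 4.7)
The plan is to proceed exactly as in the proof of Theorem \ref{convexity2}, since the switch from Assumption \ref{blinearlgconvex} to Assumption \ref{blinearlgconvexw} does not affect the argument in any essential way: the proof there uses only the affinity of the state-equation coefficients (to obtain $X^{\lambda} = X_{\lambda}$) and the convexity of $l$ and $g$. Neither the $\|\cdot\|_H$ nor the $\|\cdot\|_{-1}$ norm enters the chain of inequalities, so the stronger $H_{-1}$-continuity hypothesized in Assumption \ref{blinearlgconvexw}(i) is irrelevant to the convexity step itself.

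Concretely, I would fix $t \in [0,T]$, $x_0, x_1 \in H$, $\lambda \in [0,1]$, and $\varepsilon > 0$; choose $\varepsilon$-optimal controls $a_0^{\varepsilon}(\cdot), a_1^{\varepsilon}(\cdot) \in \mathcal{U}_t$ for $V(t, x_0)$ and $V(t, x_1)$; set $a_{\lambda}^{\varepsilon} := \lambda a_1^{\varepsilon} + (1-\lambda) a_0^{\varepsilon}$ and $x_{\lambda} := \lambda x_1 + (1-\lambda) x_0$; denote by $X_0, X_1, X_{\lambda}$ the associated state trajectories; and put $X^{\lambda}(s) := \lambda X_1(s) + (1-\lambda) X_0(s)$. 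The key step is that by affinity of $b$ and $\sigma$ in $(x,a)$ (Assumption \ref{blinearlgconvexw}(i))
\begin{equation*}
b(X^{\lambda}(s), a_{\lambda}^{\varepsilon}(s)) = \lambda b(X_1(s), a_1^{\varepsilon}(s)) + (1-\lambda) b(X_0(s), a_0^{\varepsilon}(s)),
\end{equation*}
and the same identity holds for $\sigma$. Multiplying the mild formulations for $X_0, X_1$ by $(1-\lambda)$ and $\lambda$ respectively and adding, $X^{\lambda}$ satisfies the same mild equation as $X_{\lambda}$ with the same initial datum $x_{\lambda}$, and hence by uniqueness $X^{\lambda}(s) = X_{\lambda}(s)$ a.s. for all $s \in [t,T]$. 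To invoke uniqueness I need well-posedness of the state equation under the weaker hypothesis of Assumption \ref{blinearlgconvexw}(i); this follows at once from the fact that continuity of an affine map on $H_{-1} \times \Lambda$ implies a Lipschitz estimate in the $\|\cdot\|_{-1}$ norm, which via the embedding $H \hookrightarrow H_{-1}$ yields the Lipschitz bounds on $H \times \Lambda$ needed for the standard existence and uniqueness result.

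The proof is then concluded by invoking the convexity of $l$ and $g$ (Assumption \ref{blinearlgconvexw}(ii)) to obtain
\begin{equation*}
\lambda J(t,x_1;a_1^{\varepsilon}(\cdot)) + (1-\lambda) J(t,x_0;a_0^{\varepsilon}(\cdot)) - J(t,x_{\lambda};a_{\lambda}^{\varepsilon}(\cdot)) \geq 0,
\end{equation*}
combining with the $\varepsilon$-optimality $J(t,x_i;a_i^{\varepsilon}(\cdot)) \leq V(t,x_i) + \varepsilon$ for $i = 0,1$, and the trivial inequality $V(t, x_{\lambda}) \leq J(t, x_{\lambda}; a_{\lambda}^{\varepsilon}(\cdot))$, and finally letting $\varepsilon \to 0$. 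I do not anticipate any genuine obstacle; the only point requiring a minor verification is the well-posedness of the state equation noted above, which is essentially automatic given the affinity and continuity encoded in Assumption \ref{blinearlgconvexw}(i).
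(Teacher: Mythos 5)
Your proposal is correct and follows essentially the same route as the paper, which simply observes that the proof of Theorem \ref{convexity2w} is identical to that of Theorem \ref{convexity2}: affinity of $b$ and $\sigma$ gives $X^{\lambda}=X_{\lambda}$, and convexity of $l$ and $g$ together with $\varepsilon$-optimal controls yields the convexity inequality for $V(t,\cdot)$. Your added remark on well-posedness under Assumption \ref{blinearlgconvexw}(i) (continuity of the affine extension on $H_{-1}\times\Lambda$ implies the needed Lipschitz bounds on $H\times\Lambda$ via $\|\cdot\|_{-1}\leq C\|\cdot\|_H$) is a correct, if tacit in the paper, verification.
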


\subsection{$C^{1,1}$ Regularity of the Value Function}
Similarly to Section \ref{sec:C11reg}, since $V(t,\cdot), 0\leq t\leq T$, is Lipschitz in the $\|\cdot\|_{-1}$ norm, if it is semiconvex in $H_{-1}$ and semiconcave in $H_{-1}$, $V$ can be extended to a function (still denoted by $V$) such that $V(t,\cdot)\in C^{1,1}(H_{-1})$.

\subsection{Optimal Synthesis}\label{sec:optsyntw}

In the weak $B$-condition case, the following equivalent of the uniqueness result Proposition \ref{prop:V_viscosity_sol} is in fact easier to prove than in the strong $B$-condition case.

\begin{proposition}\label{prop:V_viscosity_solw}
Let Assumptions \ref{assumptionAw}, \ref{hp:sigma_independent_control}, \ref{coercivity}, \ref{bsigmalipschitzfirstvariablew}, \ref{lglipschitzfirstvariablew},  hold.  Then  $V$ is the unique viscosity solution of \eqref{HJBsemilinear} in the set
\begin{equation*}
\begin{split}
S=\Big\{&u:[0,T]\times H\to\mathbb{R}: |u(t,x)|\leq C(1+\|x\|_H^k)\quad\textit{for some}\,\,k\geq 0,
\\
&|u(t,x)-u(t,y)|\leq C\|x-y\|_H\quad\textit{for all}\,\,t\in(0,T],x,y\in H, and
\\
&\lim_{t\to T}|u(t,x)-g(x)|=0 \quad\textit{uniformly on bounded subsets of}\,\,H\Big\}.
\end{split}
\end{equation*}
Moreover, $V$ is uniformly continuous in the $|\cdot|\times\|\cdot\|_{-1}$ norm on every set $[0,T]\times B_R$ for every $R>0$.
\end{proposition}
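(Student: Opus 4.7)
The plan is to mirror the strategy of Proposition~\ref{prop:V_viscosity_sol}: introduce the truncated controls $\mathcal{U}^m_t := \{a(\cdot)\in\mathcal{U}_t : \|a\|_{L^\infty((t,T)\times\Omega;\Lambda)}\leq m\}$ and the corresponding value function $V^m(t,x) := \inf_{a(\cdot)\in\mathcal{U}^m_t} J(t,x;a(\cdot))$, identify $V^m$ as the unique viscosity solution of the $m$-truncated HJB equation \eqref{eq:hjb_V_m} in $S$ (using the weak $B$-condition version of \cite[Theorem 3.67]{fabbri2017}), use the $m$-independent Lipschitz estimate of Theorem~\ref{th:V_lipw} to drop the truncation, and conclude.

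The key observation is that, by Theorem~\ref{th:V_lipw}, $V^m(t,\cdot)$ is Lipschitz in $\|\cdot\|_{-1}$ with a constant $R$ independent of $m$ and $t$; since $\|\cdot\|_{-1} \leq \|B^{1/2}\|_{L(H)}\|\cdot\|_H$, the same bound transfers to $\|\cdot\|_H$, so at any test-function contact point $(\bar t,\bar x)$ one has $\|D\psi(\bar t,\bar x)\|_H \leq R$. Assumption~\ref{coercivity} then yields $\bar m = m_R$ with $V^m = V^{\bar m}$ for every $m\geq\bar m$. The identity $V^{\bar m}=V$ reduces to $V^{\bar m}\leq V$ and follows verbatim from Step~3 of the proof of Proposition~\ref{prop:V_viscosity_sol}, the only substitution being that Lemma~\ref{estimatex1x0onew}(ii) and Assumption~\ref{lglipschitzfirstvariablew} replace their strong-$B$ counterparts in the convergence $J(t,x;a^\varepsilon_m(\cdot))\to J(t,x;a^\varepsilon(\cdot))$ as $m\to\infty$; here the Lipschitz continuity of $l$ and $g$ in $\|\cdot\|_{-1}$ is exactly what is needed to transfer the $\|\cdot\|_{-1}$-estimate on the states into an estimate on the cost. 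Uniqueness of $V$ in $S$ is then inherited from uniqueness at level $\bar m$: any $u\in S$ is Lipschitz in $\|\cdot\|_H$, hence at any test-function contact satisfies $\mathcal{H}(x,\pm D\psi)=\mathcal{H}_{m'}(x,\pm D\psi)$ for $m'$ depending on its Lipschitz constant, so $u$ solves \eqref{eq:hjb_V_m} at a sufficiently large level and must equal $V^{\bar m}=V$.

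The uniform continuity of $V$ on $[0,T]\times B_R$ in the $|\cdot|\times\|\cdot\|_{-1}$ norm is the combination of Theorem~\ref{th:V_lipw} (Lipschitz in $\|\cdot\|_{-1}$ uniformly in $t$) with a standard DPP time-continuity estimate $|V(t_1,x)-V(t_2,x)|\leq \omega_R(|t_2-t_1|)$ for $x\in B_R$, which follows from Assumption~\ref{lglipschitzfirstvariablew} together with a bound of order $|t_2-t_1|$ on $\mathbb{E}\sup_{s\in[t_1,t_2]}\|X(s;t_1,x,a)-x\|_{-1}^2$ derived as in Lemma~\ref{estimatex1x0onew}. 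The reason this case is genuinely \emph{easier} than Proposition~\ref{prop:V_viscosity_sol} is that the terminal trace in $S$ is here $g$ itself rather than $g(e^{(T-t)A}\cdot)$, so one works up to $t=T$ with all estimates already taking place in $\|\cdot\|_{-1}$, which is precisely the norm underlying the $B$-continuity framework; no separation into subintervals $[0,\tau]\subset[0,T)$ is needed. The only mild obstacle is checking that the weak-$B$ analogue of \cite[Theorem 3.67]{fabbri2017} (with its uniform continuity on $[0,T]\times B_R$) is available, which is routine in view of the setup of \cite[Chapter 3]{fabbri2017}.
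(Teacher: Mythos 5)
Your proposal follows essentially the same route as the paper's proof: the identical four-step truncation argument (introduce $\mathcal{U}^m_t$ and $V^m$, transfer the $m$-independent Lipschitz bound of Theorem~\ref{th:V_lipw} to $\|D\psi\|_H$ at contact points via $\|\cdot\|_{-1}\leq\|B^{1/2}\|_{L(H)}\|\cdot\|_H$, pass to the limit $V^{\bar m}=V$ using Lemma~\ref{estimatex1x0onew} and Assumption~\ref{lglipschitzfirstvariablew}, and get uniqueness by comparison at a fixed truncation level), with the weak-$B$ existence/comparison results \cite[Theorems 3.50, 3.66]{fabbri2017} replacing their strong-$B$ counterparts. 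The only divergence is the uniform continuity of $V$ in the $|\cdot|\times\|\cdot\|_{-1}$ norm on $[0,T]\times B_R$, which the paper reads off from \cite[Proposition 3.61]{fabbri2017} applied to $V^m$, whereas you sketch a direct dynamic-programming time-continuity estimate; that route also works, but it should be run for $V^{\bar m}$ (so the near-optimal controls are uniformly bounded) and it additionally requires the semigroup estimate $\|(\mathrm{e}^{sA}-I)x\|_{-1}\leq C\sqrt{s}\,\|x\|_H$ coming from the weak $B$-condition, which is not contained in Lemma~\ref{estimatex1x0onew}.
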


\begin{proof}
The proof repeats the steps of the proof of Proposition \ref{prop:V_viscosity_sol} with a few adjustments. Using \cite[Theorem 3.66]{fabbri2017}, the functions $V^m$ are the unique viscosity solutions of \eqref{eq:hjb_V_m}
in $S$ and it follows from \cite[Proposition 3.61]{fabbri2017} that $V^m$ is uniformly continuous in the $|\cdot|\times\|\cdot\|_{-1}$ norm on every set $[0,T]\times B_R$ for every $R>0$. We then show that $V^m=V^{\bar m}$ for $m\geq \bar m$ for some $\bar m$ as in Step 2, and then show that $V^m$ converge to $V$ as in Step 3. The only difference is that we now use Assumption \ref{lglipschitzfirstvariablew} and Lemma \ref{estimatex1x0onew}. We conclude as in Step 4 noting that uniqueness now follows from \cite[Theorem 3.50]{fabbri2017} (or existence and uniqueness theorem \cite[Theorem 3.66]{fabbri2017}).
\end{proof}

We strengthen Assumption \ref{hp:V_C_11}.
\begin{assumption}\label{hp:V_C_11w}
 Let for every $0\leq t \leq T$, $V(t,\cdot)\in C^{1,1}(H)$ and let there exist $C\geq 0$ such that $\|DV(t,x)\|_H\leq C, \|DV(t,x)-DV(t,x')\|_H\leq C\|x-x'\|_{-1}$ for all $0\leq t \leq T,x,x'\in H$.
\end{assumption}

Lemma \ref{lemma:DV_locally_uniform_continuous} gives us now that $DV$ is uniformly continuous on $[0,T]\times B_R$ for every $R>0$. We change Assumption \ref{infimumattained} to
\begin{assumption}\label{infimumattainedw}
 \begin{enumerate}[label=(\roman*)]
\item
   There exists a selection function
    \begin{equation*}
        \gamma : H \times H \to \Lambda_0,\quad
        (x,p) \mapsto \gamma(x,p) \in \Gamma(x,p),
    \end{equation*}
   which is Lipschitz continuous in both variables with respect to the norm $\|\cdot\|_{-1}\times\|\cdot\|_H$.
    \item For every $R>0$ there is a modulus $\omega_R$ such that
   \begin{equation*}
   |l(x,a)-l(x,a')|\leq \omega_R(\|a-a'\|_{\Lambda})
   \end{equation*}
   for all $x\in H$ and $a,a'\in\Lambda_0$ such that $\|x\|_H$, $\|a\|_{\Lambda}$, $\|a'\|_{\Lambda}\leq R$.
   \end{enumerate}
\end{assumption}
\begin{theorem}\label{th:optsynthw}
Let Assumptions \ref{assumptionAw}, \ref{bsigmalipschitzw}, \ref{lglipschitzfirstvariablew}, \ref{hp:sigma_independent_control}, \ref{coercivity}, \ref{hp:V_C_11w}, \ref{infimumattainedw} hold. Then the pair $(a^{\ast}(s),X^{\ast}(s))$, where
	\begin{equation*}
    \begin{cases}
		a^{\ast}(s) = \gamma(X^{\ast}(s),DV(s,X^{\ast}(s))\\
        X^{\ast}(s) = X(s;t,x,a^{\ast}(\cdot))
    \end{cases}
	\end{equation*}
	is an optimal couple for the optimal control problem \eqref{costfunctional}--\eqref{state} and the control problem has an optimal feedback control. 
\end{theorem}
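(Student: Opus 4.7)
The plan is to mirror the proof of Theorem \ref{th:optsynth}, replacing all $\|\cdot\|_H$-based arguments with their $\|\cdot\|_{-1}$-based analogues, and substituting \cite[Theorem 3.67]{fabbri2017} with its weak-$B$ counterpart \cite[Theorem 3.66]{fabbri2017}. Throughout, $V$ is identified with its continuous extension to $H_{-1}$ guaranteed by the preceding subsection. The idea remains: use $C^{1,1}$ regularity in $x$ together with Assumption \ref{infimumattainedw} to linearize the HJB equation around $\gamma(x,DV(t,x))$, then use uniqueness for linear Kolmogorov equations to obtain a Feynman--Kac representation that exhibits the feedback as optimal.

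First I would use Assumption \ref{hp:V_C_11w} together with Lemma \ref{lemma:DV_locally_uniform_continuous} (invoked in the strong-$B$ proof) to conclude that $DV$ is uniformly continuous on $[0,T]\times B_R$ in the $|\cdot|\times\|\cdot\|_{-1}$ norm for every $R>0$, and uniformly bounded in $H$. Combined with the $\|\cdot\|_{-1}\times\|\cdot\|_H$-Lipschitz property of $\gamma$ from Assumption \ref{infimumattainedw}(i), the feedback map $(t,x)\mapsto \gamma(x,DV(t,x))$ is Lipschitz in $x$ with respect to $\|\cdot\|_{-1}$, uniformly in $t$. Setting
\begin{equation*}
\tilde b(t,x) := b(x,\gamma(x,DV(t,x))),\qquad \tilde l(t,x) := l(x,\gamma(x,DV(t,x))),
\end{equation*}
Assumption \ref{bsigmalipschitzw}(i) then yields $\|\tilde b(t,x)-\tilde b(t,y)\|_H \leq C\|x-y\|_{-1}$ for all $t\in[0,T]$, $x,y\in H$. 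Moreover, since the feedback is bounded on bounded sets of $H$ (as $DV$ is bounded and $\gamma$ is Lipschitz), Assumption \ref{infimumattainedw}(ii) combined with Assumption \ref{lglipschitzfirstvariablew}(i) gives a modulus of continuity for $\tilde l(t,\cdot)$ on bounded subsets of $H$, uniformly in $t$, of the form required by the linear uniqueness theory.

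Next, by Proposition \ref{prop:V_viscosity_solw}, $V$ is the unique viscosity solution of \eqref{HJBsemilinear} in $S$. Since the infimum defining $\mathcal{H}(x,DV(t,x))$ is attained at $\gamma(x,DV(t,x))$, the function $V$ is also a viscosity solution of the linear Kolmogorov equation
\begin{equation*}
v_t + \langle Ax,Dv\rangle_H + \tfrac12 \mathrm{Tr}[\sigma(x)\sigma^{\ast}(x)D^2v] + \langle \tilde b(t,x),Dv\rangle_H + \tilde l(t,x) = 0,\qquad v(T,\cdot)=g.
\end{equation*}
Applying \cite[Theorem 3.66]{fabbri2017} (control set a singleton) provides existence and uniqueness of a viscosity solution of this linear equation in $S$, which is furthermore represented by the Feynman--Kac formula
\begin{equation*}
v(t,x) = \mathbb{E}\left[\int_t^T \tilde l(s,X^{\ast}(s))\,\mathrm{d}s + g(X^{\ast}(T))\right],
\end{equation*}
where $X^{\ast}$ solves the closed-loop SDE $\mathrm{d}X^{\ast}(s) = [AX^{\ast}(s)+\tilde b(s,X^{\ast}(s))]\mathrm{d}s + \sigma(X^{\ast}(s))\mathrm{d}W(s)$ with $X^{\ast}(t)=x$; well-posedness of this SDE in the original reference space follows from the $\|\cdot\|_{-1}$-Lipschitz bound on $\tilde b$ and Assumption \ref{hp:sigma_independent_control}. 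Uniqueness forces $V=v$, which identifies $a^{\ast}(s) := \gamma(X^{\ast}(s),DV(s,X^{\ast}(s)))$ as an optimal feedback control.

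The main obstacle will be the compatibility check: verifying that the regularity properties of $\tilde b$ and $\tilde l$ derived via $\|\cdot\|_{-1}$ actually fit the continuity framework required by \cite[Theorem 3.66]{fabbri2017} in the weak-$B$ setting, in particular that the modulus for $\tilde l$ is of the type admitted by that theorem on bounded subsets of $H$. Once this verification is in place (paralleling the step in the proof of Theorem \ref{th:optsynth} where uniform continuity of $\tilde b,\tilde l$ on $[0,\tau]\times B_R$ was noted to suffice for \cite[Theorem 3.54]{fabbri2017}), the rest of the argument is essentially a direct translation of the strong-$B$ proof.
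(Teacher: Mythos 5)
Your proposal matches the paper's proof essentially step for step: establish the $\|\cdot\|_{-1}$-regularity of $\tilde b$ and $\tilde l$ via Assumptions \ref{hp:V_C_11w}, \ref{infimumattainedw} and Lemma \ref{lemma:DV_locally_uniform_continuous}, observe that $V$ solves the linearized equation \eqref{reducedHJB}, and invoke \cite[Theorem 3.66]{fabbri2017} in place of \cite[Theorem 3.67]{fabbri2017} to get the Feynman--Kac representation and conclude optimality by uniqueness. The approach and all key ingredients coincide with the paper's argument.
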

\begin{proof}
We just need to update the proof of Theorem \ref{th:optsynth}. We notice that by our assumptions, the functions $\tilde b(t,x)= b(x,\gamma(x,DV(t,x)))$ and $\tilde  l(t,x)=l(x,\gamma(x,DV(t,x)))$ are uniformly continuous on bounded subsets of $[0,T]\times H$, and moreover 
\begin{equation*}
\|\tilde b(t,x)-\tilde b(t,y)\|_H\leq C\|x-y\|_{-1}
\end{equation*}
for all $t\in[0,T]$, $x,y\in H$, and for every $R>0$ there is a modulus $\sigma_R$ such that
\begin{equation*}
\|\tilde l(t,x)-\tilde l(t,y)\|_H\leq \sigma_R(\|x-y\|_{-1})
\end{equation*}
for all $t\in[0,T]$ and $x,y\in H$ such that $\|x\|_H,\|y\|_H\leq R$. Thus, we can apply  \cite[Theorem 3.66]{fabbri2017} to claim that \eqref{reducedHJB} has a unique viscosity solution in $S$ given by \eqref{eq:feynman_kac}. However, since $V$ is also a viscosity solution of \eqref{reducedHJB}, using uniqueness we obtain \eqref{eq:Vrepresentation}, which gives the optimality of the feedback control $a^{\ast}(\cdot)$.
\end{proof}

\section{Applications}\label{section:applications}

In this section we apply our theory to controlled stochastic partial differential equations as well as controlled stochastic delay differential equations.

\subsection{Controlled Stochastic Reaction-Diffusion Equations}\label{example_reaction_diffusion}
In this first example, we consider the state equation \eqref{state} for $H=L^2(\mathcal{O})$, where $\mathcal{O}\subset \mathbb{R}^d$, $d\leq 4$, is a bounded domain with a sufficiently regular boundary. Let
\begin{equation*}
\begin{cases}
    A := \sum_{i,j=1}^d \partial_i ( a_{ij} \partial_j) + \sum_{i=1}^d b_i \partial_i + c\\
    \mathcal{D}(A) = H_0^1(\mathcal{O}) \cap H^2(\mathcal{O})
\end{cases}
\end{equation*}
where $a_{ij}= a_{ji}, b_i\in C^1(\bar{\mathcal{O}})$, $i,j=1,\dots,d$, $c \in C(\bar{\mathcal{O}})$, and there is a constant $\theta >0$ such that
\begin{equation*}
    \sum_{i,j=1}^d a_{ij} \xi_i \xi_j \geq \theta | \xi |^2
\end{equation*}
for all $\xi\in \mathbb{R}^d$. Under these assumptions, $A$ generates a positivity preserving $C_0$-semigroup, see \cite[Theorem 4.2]{ouhabaz2005}. Thus, Assumption \ref{comparisonprinciple}(i) is satisfied. Furthermore, for an appropriate choice of $B$, the strong $B$-condition (Assumption \ref{assumptionA}) is satisfied, see \cite[Example 3.16]{fabbri2017}. Moreover, $A$ satisfies G\r{a}rding's inequality
\begin{equation}\label{garding}
    \langle Ax,x \rangle_{H^{-1}(\mathcal{O}) \times H^1_0(\mathcal{O})} \leq - C_1 \|x\|_{H^1_0(\mathcal{O})}^2 + C_2 \|x\|^2_H
\end{equation}
for some constants $C_1,C_2 > 0$, see \cite[Example 11.5]{renardy_rogers_2004}. Note that the operator $A$ is not dissipative in general. However, one can consider the operator $\tilde{A} := A- C_2 I$, which is dissipative due to \eqref{garding} and then modify the bounded term $\mathfrak{b}$ that is introduced below.

Let $\mathfrak{b}:\mathbb{R} \to \mathbb{R}$ model some local reaction function, let $\Lambda=L^2(\mathcal{O})$, and define $b(x,a)(\xi) := \mathfrak{b}(x(\xi)) - a(\xi)$ for $x,a\in L^2(\mathcal{O})$. Furthermore, let us consider additive noise, i.e., $\sigma(x,a) \equiv \sigma \in L_2(\Xi,H)$. In this case, the state equation reduces to
\begin{equation*}
\begin{cases}
	\mathrm{d}X(s) = [ A X(s) + \mathfrak{b}(X(s)) - a(s) ] \mathrm{d}s + \sigma \mathrm{d}W(s),\quad s\in [t,T]\\
	X(t) = x\in L^2(\mathcal{O}).
\end{cases}
\end{equation*}
Control problems associated with such state equations arise in many applications, see \cite[Section 2.6.1]{fabbri2017} and the references therein.

If $\mathfrak{b}$ is differentiable and its derivative is bounded, $b$ satisfies Assumption \ref{bsigmalipschitz}(i). The Gateaux derivative of $b$ with respect to $x$ is given by
\begin{equation*}
    D_xb(x,a)(\xi) = \mathfrak{b}^{\prime}(x(\xi)),
\end{equation*}
where the right-hand side acts as a multiplication operator in $L^2(\mathcal{O})$. We note that even if $\mathfrak{b}\in C^{1,1}(\mathbb{R})$, the associated Nemytskii operator is not in $C^{1,1}(H)$. However, this regularity is used in the proof of Lemma \ref{estimatexlambdasamecontrol}(i), which in turn is used to prove the semiconcavity in Theorem \ref{th:semiconc}. Therefore, we need to modify the proofs in Subsection \ref{semiconcavity}.

In the present example, using variational methods, G\r{a}rdings inequality \eqref{garding} can be used to prove the following improved version of Lemma \ref{estimatex1x0one}(i):
\begin{lemma}\label{lemma_reaction_diffusion}
    Let Assumption \ref{bsigmalipschitzfirstvariable} be satisfied. Let $a_0(\cdot) = a_1(\cdot) = a(\cdot)\in \mathcal{U}_t$. Then, there is a constant $C\geq 0$, such that
		\begin{equation*}
			\mathbb{E} \left [ \sup_{s\in [t,T]} \| X_1(s) - X_0(s) \|_H^{2} + \int_t^T \| X_1(s) - X_0(s)\|_{H^1_0(\mathcal{O})}^2 \mathrm{d}s \right ] \leq C \|x_1 - x_0 \|_H^{2}
		\end{equation*}
        for all $x_0,x_1\in H$ and $a(\cdot)\in \mathcal{U}_t$.
\end{lemma}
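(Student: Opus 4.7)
Since in this example $\sigma(x,a)\equiv\sigma\in L_2(\Xi,H)$ is additive and $a_0(\cdot)=a_1(\cdot)=a(\cdot)$, the difference $Z(s):=X_1(s)-X_0(s)$ satisfies the pathwise random PDE
\begin{equation*}
  \frac{\mathrm{d}}{\mathrm{d}s}Z(s)=AZ(s)+\mathfrak{b}(X_1(s))-\mathfrak{b}(X_0(s)),\qquad Z(t)=x_1-x_0,
\end{equation*}
with no stochastic integral term. The plan is to carry out a variational energy estimate pathwise in the Gelfand triple $H^1_0(\mathcal{O})\subset H\subset H^{-1}(\mathcal{O})$, and then take expectations. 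Standard parabolic regularity (using that $\mathfrak{b}$ is Lipschitz and $A$ satisfies G\r{a}rding's inequality \eqref{garding}) yields $X_i(\omega,\cdot)\in L^2(t,T;H^1_0(\mathcal{O}))\cap C([t,T];H)$ for a.e.\ $\omega$, hence the same regularity for $Z$.

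The key step is to apply the chain rule for the Gelfand triple (the Lions--Magenes formula, which in the present setting coincides with the variational It\^o formula analogous to \cite[Proposition 1.164]{fabbri2017} since the equation for $Z$ has no noise) to $s\mapsto\|Z(s)\|_H^2$. This gives
\begin{equation*}
  \|Z(s)\|_H^2=\|x_1-x_0\|_H^2+2\int_t^s\langle AZ(r),Z(r)\rangle_{H^{-1}(\mathcal{O})\times H^1_0(\mathcal{O})}\,\mathrm{d}r+2\int_t^s\langle \mathfrak{b}(X_1(r))-\mathfrak{b}(X_0(r)),Z(r)\rangle_H\,\mathrm{d}r.
\end{equation*}
I would then bound the first integral using G\r{a}rding's inequality \eqref{garding}, that is $\langle AZ,Z\rangle\le -C_1\|Z\|_{H^1_0(\mathcal{O})}^2+C_2\|Z\|_H^2$, and bound the second integral by Cauchy--Schwarz combined with Lipschitz continuity of $\mathfrak{b}$, obtaining $|\langle \mathfrak{b}(X_1)-\mathfrak{b}(X_0),Z\rangle_H|\le C\|Z\|_H^2$. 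Rearranging yields
\begin{equation*}
  \|Z(s)\|_H^2+2C_1\int_t^s\|Z(r)\|_{H^1_0(\mathcal{O})}^2\,\mathrm{d}r\le\|x_1-x_0\|_H^2+C'\int_t^s\|Z(r)\|_H^2\,\mathrm{d}r.
\end{equation*}

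Finally, applying Gr\"onwall's inequality to the scalar function $s\mapsto\|Z(s)\|_H^2$ produces the pointwise bound $\|Z(s)\|_H^2\le e^{C'(T-t)}\|x_1-x_0\|_H^2$. Substituting this back into the previous display and taking the supremum over $s\in[t,T]$ yields the pathwise estimate
\begin{equation*}
  \sup_{s\in[t,T]}\|Z(s)\|_H^2+\int_t^T\|Z(r)\|_{H^1_0(\mathcal{O})}^2\,\mathrm{d}r\le C\|x_1-x_0\|_H^2,
\end{equation*}
and taking expectations concludes. The main technical obstacle is the justification of the variational chain rule, and in particular the almost sure regularity $X_i\in L^2(t,T;H^1_0(\mathcal{O}))$; in the additive-noise setting this is obtained by subtracting the stochastic convolution $W_A(s)=\int_t^s e^{(s-r)A}\sigma\,\mathrm{d}W(r)$, whose spatial regularity under the present assumptions on $A$ is classical, thereby reducing the argument to a pathwise deterministic parabolic equation to which the standard Lions--Magenes energy estimates apply.
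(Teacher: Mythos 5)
Your proof is correct in the setting where it is applied, but it takes a genuinely different route from the paper. The paper's proof is a one-line adaptation of the proof of Lemma \ref{estimatex1x0one}: it applies the stochastic variational It\^o formula of \cite[Theorem 4.2.5]{liu_roeckner_2015} to $\|X_1(s)-X_0(s)\|_H^2$, uses G\r{a}rding's inequality \eqref{garding} to produce the extra $\int_t^T\|X_1-X_0\|_{H^1_0(\mathcal{O})}^2\,\mathrm{d}s$ term, and otherwise retains the stochastic machinery (Burkholder--Davis--Gundy for the martingale term, then Gr\"onwall). You instead observe that with additive noise and a common control the stochastic convolutions cancel, so $Z=X_1-X_0$ solves a pathwise deterministic linear parabolic equation with an $L^2(t,T;H)$ forcing, and you run the Lions--Magenes energy identity $\omega$-by-$\omega$ before taking expectations. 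This is more elementary (no stochastic integral to estimate, no BDG), and your final pathwise bound is in fact stronger than the stated expectation bound. The trade-off is scope: the lemma as stated hypothesizes only Assumption \ref{bsigmalipschitzfirstvariable}, which permits $\sigma=\sigma(x,a)$ Lipschitz in $x$; in that generality the noise terms do not cancel and your pathwise argument breaks down, whereas the paper's variational It\^o route goes through verbatim. Since the lemma sits inside Section \ref{example_reaction_diffusion}, where $\sigma$ is assumed constant, your argument suffices for every use the paper makes of it, but you should state explicitly that you are using the additive-noise structure beyond what the quoted hypothesis records. Two minor remarks: the preliminary step of establishing $X_i\in L^2(t,T;H^1_0(\mathcal{O}))$ a.s.\ by subtracting the stochastic convolution is unnecessary, since only the variational regularity of $Z$ itself is needed and $Z$ already solves a deterministic equation with forcing $\mathfrak{b}(X_1)-\mathfrak{b}(X_0)\in L^\infty(t,T;H)$ pathwise; and one should note that the identification of the mild solution $Z$ with the variational solution (so that the energy identity applies) is the standard Lions argument for the Gelfand triple $H^1_0(\mathcal{O})\subset H\subset H^{-1}(\mathcal{O})$ under \eqref{garding}.
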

The proof follows along the same lines as the proof of Lemma \ref{estimatex1x0one} but uses the variational It\^o formula, see \cite[Theorem 4.2.5]{liu_roeckner_2015}, and equation \eqref{garding}.

Using this lemma and assuming that $\mathfrak{b}\in C^{1,1}(\mathbb{R})$, we can still prove Lemma \ref{estimatexlambdasamecontrol}(i) even though the Nemystkii operator is not in $C^{1,1}(H)$. Indeed, instead of equation \eqref{estimateb1}, we now have
\begin{equation*}
\begin{split}
	&\| \lambda b(X_1(s)) +(1-\lambda) b(X_0(s)) - b(X^{\lambda}(s)) \|_H\\
    &\leq \lambda (1-\lambda) \left ( \int_{\mathcal{O}} \int_0^1 ( \mathfrak{b}^{\prime}(\bar X_1(\theta,\xi)) - \mathfrak{b}^{\prime}(\bar X_0(\theta,\xi)))^2 (X_1(s,\xi) - X_0(s,\xi))^2 \mathrm{d}\theta \mathrm{d}\xi \right )^{\frac12}\\
    &\leq C \lambda (1-\lambda) \| (X_1(s) - X_0(s)) \|_{L^4(\mathcal{O})}^2 \leq C \lambda (1-\lambda) \| (X_1(s) - X_0(s)) \|_{H^1_0(\mathcal{O})}^2,
 \end{split}
\end{equation*}
where in the last step we used the fact that $d\leq 4$ and therefore $H^1_0(\mathcal{O})$ is continuously embedded in $L^4(\mathcal{O})$, due to the Sobolev embedding theorem. Thus, using similar arguments as in equation \eqref{estimatingb} and using the previous estimate, we obtain
\begin{equation*}
\begin{split}
    & \int_t^T \langle \lambda b(X_1(s)) +(1-\lambda) b(X_0(s)) - b(X^{\lambda}(s)) , X^{\lambda}(s) - X_{\lambda}(s) \rangle_H \mathrm{d}s \\
    &\leq \frac{1}{16} \sup_{s\in [t,T]} \| X^{\lambda}(s) - X_{\lambda}(s) \|^2_H + C \lambda^2 (1-\lambda)^2 \left ( \int_t^T \| (X_1(s) - X_0(s)) \|_{H^1_0(\mathcal{O})}^2 \mathrm{d}s \right )^2.
\end{split}
\end{equation*}
Now, the remainder of the proof of Lemma \ref{estimatexlambdasamecontrol}(i) follows along the same lines, except that we use Lemma \ref{lemma_reaction_diffusion} instead of Lemma \ref{estimatex1x0one} to estimate the last term in the previous inequality. Therefore, the proof of Theorem \ref{th:semiconc} still holds under appropriate assumptions on the coefficients of the cost functional (which are discussed below).

Note that Assumptions \ref{bsigmalipschitz}(ii), \ref{bsigmafrechet}(ii), and \ref{hp:sigma_independent_control} are trivially satisfied for additive noise.

For the cost functional, let us assume that the running costs are separated as in Example \ref{exampleinfimumattained} and are of Nemytskii type, i.e.,
\begin{equation}
	J(t,x;a(\cdot)) := \mathbb{E} \left [ \int_t^T \int_{\mathcal{O}}\left( l_1(X(s,\xi)) + l_2(a(s,\xi))\right) \mathrm{d}\xi \mathrm{d}s + \int_{\mathcal{O}} g(X(T,\xi)) \mathrm{d}\xi \right ]
\end{equation}
for some functions $l_1,l_2,g:\mathbb{R}\to \mathbb{R}$. Assuming that $l_1$ and $g$ are Lipschitz continuous, $l_2$ is quadratically bounded and $l_1,l_2$ and $g$ are bounded below, Assumption \ref{lglipschitzfirstvariable} is satisfied. Moreover, if $l_1$ and $g$ are semiconcave, Assumption \ref{lgsemiconcave} is satisfied. Furthermore, if $l_1,l_2$ and $g$ are convex, and $l_1$ and $g$ are non-increasing, Assumption \ref{comparisonprinciple2} is satisfied. Finally, if $\mathfrak{b}$ is concave, Assumption \ref{comparisonprinciple3} is satisfied. Thus, we can apply Theorem \ref{th:V_lip} to prove Lipschitz continuity of the value function, Theorem \ref{th:semiconc} to obtain semiconcavity (where we need to make the modifications outlined above), and Theorem \ref{th:convcomparison1} to obtain convexity of the value function. Therefore, the value function is $C^{1,1}$, i.e., Assumption \ref{hp:V_C_11} is satisfied. Assuming $l_2(a)/|a| \to +\infty$ as $|a|\to \infty$ implies that Assumption \ref{coercivity} is satisfied, see Remark \ref{rem:H_attains_inf_bounded_set_H}. Assuming additionally that $l_2(a)-\nu |a|^2$ is convex for some $\nu >0$, we can combine this with Example \ref{exampleinfimumattained} which shows that Assumption \ref{infimumattained} is satisfied. Thus, we can apply Theorem \ref{th:optsynth} to obtain the optimal control in the feedback form
\begin{equation*}
    a^{\ast}(s) = Dl_2^{-1}(DV(s,X^{\ast}(s))).
\end{equation*}

\subsection{Controlled Stochastic Delay Differential Equations}
In this second example, we apply our theory to optimal control problems for stochastic delay differential equations (SDDEs). These problems are a natural generalization of stochastic control problems for Markovian SDEs in $\mathbb R^n$ and arise in many applied sciences such as economics and finance. We refer to  \cite{deFeo-Federico-Swiech} for an extensive introduction to the subject, see also \cite[Section 2.6.8]{fabbri2017}.

Let us consider the following controlled SDDE
\begin{equation}
\label{eq:SDDE}
\begin{cases}
\mathrm{d}y(s) =  b_0 \left ( y(s),\int_{-d}^0 \eta_1(\xi)y(s+\xi)\mathrm{d}\xi ,a(s) \right)
         \mathrm{d}s 
  + \sigma_0 \left (y(s),\int_{-d}^0 \eta_2(\xi)y(s+\xi)\mathrm{d}\xi , a(s) \right) \mathrm{d}W(s),\\
y(t)=x_0, \quad y(t+\xi)=x_1(\xi) \quad \forall \xi\in[-d,0),
\end{cases}
\end{equation}
where  $d>0$ is the maximum delay, $s \geq t \geq 0,$  
 $x_0 \in \mathbb{R}^n$ and $x_1 \in L^2([-d,0]; \mathbb{R}^n)$ are the initial conditions, $b_0 \colon \mathbb{R}^n \times \mathbb{R}^h \times \Lambda_0 \to \mathbb{R}^n$, $\sigma_0 \colon \mathbb{R}^n \times \mathbb{R}^h \times \Lambda_0 \to \mathbb R^{n\times q}$
 and $W(t)$ is a standard Wiener process on $[t,\infty)$ taking values in $\mathbb R^q$. Here, the control set  $\Lambda_0 \subset \Lambda:= \mathbb  R^n$ is convex,
 $\eta_{i}:[-d,0]\to \mathbb  R^{h \times n}$ for $i=1,2$ are the delay  kernels and if $\eta_{i}^{j}$ is the $j$-th row of $\eta_i(\cdot)$, for $j=1,\dots,h$, then  $\eta_i^{j}$ is a function in the  Sobolev space $ W^{1,2}([-d,0];\mathbb R^n)$  and $\eta_i^{j}(-d)=0$. The running cost is $l_0 \colon \mathbb{R}^n \times \Lambda_0 \to \mathbb{R}$ and the terminal cost is  $g_0 \colon \mathbb{R}^n  \to \mathbb{R}$. Given an initial state $(x_0,x_1) \in \mathbb R^n \times L^2([-d,0]; \mathbb{R}^n),$ the  functional to be minimized is
\begin{equation*}
J(t,x_0,x_1;a(\cdot)) :=
\mathbb E\left[\int_t^{T} 
l_0(y(s),a(s)) \mathrm{d}s + g_0(y(T)) \right],
\end{equation*}
over all admissible controls $a(\cdot)$.
In this setting and under the assumptions of the next subsection, there exists a unique strong solution to \eqref{eq:SDDE}.
\begin{example}
Examples of these types of problems with delays are for instance Merton portfolio problems (see e.g.  \cite{deFeo-Federico-Swiech}), optimal advertising problems (see e.g. \cite{gozzi_marinelli_2004}, \cite{deFeo-Federico-Swiech}), optimization of pension funds (see e.g. \cite{federico_2011})
and optimal control of endogenous growth models with time-to-build (see e.g. \cite{bambi_fabbri_gozzi (2012)}, \cite{goldys_2} for the deterministic case). 
\end{example}

Set $H:=\mathbb{R}^n \times L^2([-d,0]; \mathbb{R}^n)$. We use the notation $x=(x_0,x_1) \in H$ or $x=[x_0,x_1]^T$. Defining $A \colon D(A) \subset H \to H$, $ b \colon H \times \Lambda_0 \to H$  and $\sigma \colon H \times \Lambda_0 \to L(\mathbb{R}^q,H)$, respectively, by  
\begin{align*}
& A x:= \begin{bmatrix}
-x_0\\
x_1'
\end{bmatrix}, \quad D(A):= \left\{ x=(x_0,x_1) \in H: x_1 \in W^{1,2}([-d,0];\mathbb R^n), \ x_1(0)=x_0\right\},\\
& b(x,a) :=
\begin{bmatrix}
b_0 \left ( x_0,\int_{-d}^0 \eta_1(\xi)x_1(\xi)\mathrm{d}\xi ,a \right)+x_0\\
0
\end{bmatrix},\quad  \sigma(x,u)w:=\begin{bmatrix}
\sigma_0 \left ( x_0,\int_{-d}^0 \eta_2(\xi)x_1(\xi)\mathrm{d}\xi ,a \right)w\\
0
\end{bmatrix},\\
& l(x,a):=l_0(x_0,a), \quad g(x):=g_0(x_0) \quad \quad \quad  \quad \quad \quad \quad \quad \quad \quad \quad \forall x=(x_0,x_1) \in H, \ a \in \Lambda_0, \ w \in \mathbb{R}^q,
\end{align*}
the state equation \eqref{eq:SDDE} can be equivalently reformulated as the evolution equation \eqref{state} in the Hilbert space $H$, see \cite[Section 3]{deFeo-Federico-Swiech}. Moreover, as proved in \cite[Sections 3, 4]{deFeo-Federico-Swiech}, $B:=(A^{-1})^*A^{-1}$ satisfies the weak $B$-condition (Assumption \ref{assumptionAw}) with $C_0=0$ and the following crucial inequalities hold
\begin{align}
    &|x_0|\leq \|x\|_{-1} & \forall x=(x_0,x_1) \in H,\label{eq:x_0_leq_x_-1}\\
    & \left |\int_{-d}^0 \eta_i(\xi) x_1(\xi) d \xi \right| \leq C \|x\|_{-1} & \forall i=1,2, x=(x_0,x_1) \in H. \label{eq:int_leq_x_-1}
\end{align}
 \subsubsection{$C^{1,1}$ Regularity of the Value Function}
In the following, we show that the hypotheses made for the general theory are satisfied in the present case, assuming appropriate assumptions on the coefficients of the control problem. 
\begin{enumerate}
\item Let $b_0 \colon \mathbb{R}^n \times \mathbb{R}^h \times \Lambda_0 \to \mathbb{R}^n$, $\sigma_0 \colon \mathbb{R}^n \times \mathbb{R}^h \times \Lambda_0 \to \mathbb R^{n\times q}$ be continuous and be Lipschitz in the first two variables, uniformly in $a \in  \Lambda_0,$ and let there be a constant $C>0$ such that  $|b_0(y,z,a)|,|\sigma_0(y,z,a)|\leq C(1+|y|+|z|+|a|)$ for every  $  y \in \mathbb R^n,z \in \mathbb R^h,a \in \Lambda_0.$
Then, by \eqref{eq:x_0_leq_x_-1} and \eqref{eq:int_leq_x_-1},  Assumptions \ref{bsigmalipschitzfirstvariablew} and \ref{bsigmalipschitzw} are satisfied. 
\item Let $g_0 \colon \mathbb{R}^n  \to \mathbb{R}$ be Lipschitz and $l_0 \colon \mathbb{R}^n \times \Lambda_0 \to \mathbb{R}$ be continuous and be Lipschitz in the first variable, uniformly in $a \in  \Lambda_0,$ and let there be a constant $C>0$ such that  $|l_0(y,a)|\leq C(1+|{y}|+|a|^2)$ for every  $y \in \mathbb R^n,a \in \Lambda_0.$ If $\Lambda_0$ is unbounded we also assume $l_0,g_0$ bounded from below. Then Assumption    \ref{lglipschitzfirstvariablew} is satisfied. 
    \item For simplicity of notation we assume here that $n=h=q=1$ but the general case $n,h,q\in \mathbb{N}$ can be treated similarly. Let $b_0(\cdot,\cdot,a) \in C^{1,1}(\mathbb R \times \mathbb R;\mathbb R), \sigma_0(\cdot,\cdot,a) \in C^{1,1}(\mathbb R \times \mathbb R;\mathbb R)$ with the Lipschitz constants of $ \frac{\partial b_0}{\partial y}(\cdot, \cdot,a), \frac{\partial b_0}{\partial z}(\cdot, \cdot,a),\frac{\partial \sigma_0}{\partial y}(\cdot, \cdot,a), \frac{\partial \sigma_0}{\partial z}(\cdot, \cdot,a)$ being uniform in $a \in \Lambda$. Then Assumption  \ref{bsigmafrechetfirstvariablew} holds. We only show this for $b$ as similar techniques apply to $\sigma$. Notice that $b(x,a)$ is Fr\'echet differentiable in $x$ with Fr\'echet derivative $D_x b \colon H \times \Lambda_0 \rightarrow  L(H)$ given by
    \begin{align*}
        D_x b(x,a)=\begin{bmatrix}
            \frac{\partial b_0}{\partial y} \left ( x_0,\int_{-d}^0 \eta_1(\xi)x_1(\xi)\mathrm{d}\xi ,a \right) & \frac{\partial b_0}{\partial z} \left ( x_0,\int_{-d}^0 \eta_1(\xi)x_1(\xi)\mathrm{d}\xi ,a \right) \int_{-d}^0 \eta_1(\xi) \cdot \mathrm{d}\xi \\
        0 & 0
        \end{bmatrix}.
    \end{align*}
Using \eqref{eq:x_0_leq_x_-1}, \eqref{eq:int_leq_x_-1}, it holds
\begin{align*}
    &\| (D_xb(x,a) - D_xb(x^{\prime},a))(x-x^{\prime}) \|_{-1} \leq C \| (D_xb(x,a) - D_xb(x^{\prime},a))(x-x^{\prime}) \|_H\\
    &=\Bigg |\left ( \frac{\partial b_0}{\partial y} \left ( x_0,\int_{-d}^0 \eta_1(\xi)x_1(\xi)\mathrm{d}\xi ,a \right) - \frac{\partial b_0}{\partial y} \left ( x_0',\int_{-d}^0 \eta_1(\xi)x_1'(\xi)\mathrm{d}\xi ,a \right) \right)(x_0-x_0')\\
    & \quad \quad + \left ( \frac{\partial b_0}{\partial z} \left ( x_0,\int_{-d}^0 \eta_1(\xi)x_1(\xi)\mathrm{d}\xi ,a \right) -  \frac{\partial b_0}{\partial z} \left ( x_0',\int_{-d}^0 \eta_1(\xi)x_1'(\xi)\mathrm{d}\xi ,a \right)\right) \int_{-d}^0 \eta_1(\xi) (x_1-x_1')(\xi) \mathrm{d}\xi \Bigg |\\
    & \leq C \left ( |x_0-x_0'| + \Bigg |\int_{-d}^0 \eta_1(\xi)(x_1-x_1')(\xi)\mathrm{d}\xi \Bigg | \right  )^2 \leq C  \|x-x' \|^2_{-1}.
\end{align*}
    \item Let $\sigma_0$ be independent of $a$ and $b_0 \in C^{1,1}(\mathbb R^n \times\mathbb R^h \times \mathbb R^n;\mathbb R^n),$ $ \sigma_0 \in C^{1,1}(\mathbb R^n \times \mathbb R^h;\mathbb R^{n \times q})$.  Then Assumption \ref{bsigmafrechetw} is proved using similar techniques as above.
    \item  Let  $g_0(\cdot)$ be semiconcave and $l_0(\cdot,a)$ be semiconcave with semiconcavity constant uniform in $a \in \Lambda_0$. Then Assumption \ref{lgsemiconcavew} is satisfied. Indeed, by \eqref{eq:x_0_leq_x_-1} we have 
    \begin{align*}
        \lambda l(x,a)+ (1-\lambda) l(x',a)-l(\lambda x+ (1-\lambda)x',a )& =      \lambda l_0(x_0,a)+ (1-\lambda) l_0(x_0',a)-l_0(\lambda x_0+ (1-\lambda)x_0',a )\\& \leq C \lambda (1-\lambda)|x_0-x_0'|^2 \leq C \lambda (1-\lambda)|x-x'|^2_{-1}
    \end{align*}
    for every $\lambda \in [0,1], $ $x=(x_0,x_1), x'=(x_0',x_1') \in H, a \in \Lambda_0,$ so $l_0(\cdot,a)$ is semiconcave. The same computation applies to $g$.
  \item Let $g_0(x_0)$ be semiconvex and let there exist  $C,\nu \geq 0$ such that the map $(x_0,a)\mapsto l_0(x_0,a) + C|x_0|^2 - \nu |a|^2$
		is convex. Then Assumption \ref{lgsemiconvexw} is satisfied by a similar argument as in point 5 above. As before, using that the function
	 $\mathbb R^n \times\Lambda_0 \ni (x_0,a)\mapsto l_0(x_0,a) + C|x_0|^2 - \nu |a|^2$ is convex and \eqref{eq:x_0_leq_x_-1}, we have
   \begin{align*}
         &\lambda l_0(x_0,a)+ (1-\lambda) l_0(x_0',a')-l_0(\lambda x_0+ (1-\lambda)x_0',\lambda a+ (1-\lambda)a' ) 
         \\& \geq - C \lambda (1-\lambda)|x_0-x_0'|^2 +  \nu \lambda (1-\lambda)|a-a'|^2
         \geq - C \lambda (1-\lambda)|x-x'|^2_{-1} +  \nu \lambda (1-\lambda)|a-a'|^2
    \end{align*}
     for every $\lambda \in [0,1],$ $x=(x_0,x_1), x'=(x_0',x_1') \in H,$  $a,a' \in \Lambda_0. $ This means that  the function $H \times\Lambda_0 \ni(x,a)\mapsto l(x,a) + C|x|^2_{-1} - \nu |a|^2$ is convex.
    \item  Let $b_0,\sigma_0$  be linear  and $l_0,g_0$ be convex. Then Assumption \ref{blinearlgconvexw} is  satisfied.
\end{enumerate}
\begin{remark}\label{rem:case3_delay}
We also point out that in the infinite horizon case in \cite[Example 4.2]{deFeoSwiech}, convexity of $V$ is obtained by means of comparison theorems for SDDEs in the spirit of Section \ref{section:case3}. In the finite horizon case here similar techniques apply.
\end{remark}
Hence, using the theory developed in Section \ref{weakBcase}, we can obtain $V(t,\cdot) \in C^{1,1}(H_{-1})$ under various combinations of the above assumptions. We recall that in \cite{deFeo-Federico-Swiech}, only partial $C_{\rm loc}^{1,\alpha}(\mathbb R^n)$ regularity of $V$ with respect to $x_0$ was obtained, however the assumptions used there were very weak.
  \subsubsection{Optimal Synthesis}
Since by previous subsection we can obtain $V(t,\cdot) \in C^{1,1}(H_{-1})$, we now assume that Assumption \ref{hp:V_C_11w} holds.
Let $\sigma_0$ be independent of $a$ and let $\Lambda_0=\Lambda=\mathbb R^n$. Note that we are not exactly in the setting of Example \ref{exampleinfimumattained} as we do not have $ \Lambda_0= H$. Moreover, let $b_0 \colon \mathbb{R}^n \times \mathbb{R}^h \times \mathbb{R}^n \to \mathbb{R}^n$ be of the form $b_0(y,z,a)=b_0(y,z)-a$ for every $(y,z,a) \in \mathbb R^n \times \mathbb R^h \times \mathbb R^n$ and  $l_0(x_0,a)=l_0^1(x_0)+l_0^2(a)$. 
\begin{remark}
We remark that this kind of structure of the coefficients is frequently employed in the economic literature. Examples in the case of delays are optimal advertising problems (see e.g. \cite{gozzi_marinelli_2004}, \cite{deFeo-Federico-Swiech}) or optimal control of endogenous growth models with time-to-build (e.g. see e.g. \cite{bambi_fabbri_gozzi (2012)}, \cite{goldys_2} for the deterministic case).
\end{remark}
We now have 
 \begin{equation*}
        \mathcal{F}(x,p,a) =  p_0 \cdot \left [ b_0 \left ( x_0,\int_{-d}^0 \eta_1(\xi)x_1(\xi)\mathrm{d}\xi \right)+x_0 \right]  -  p_0  \cdot a  + l_0^1(x_0) + l_0^2(a)  \quad \forall x=(x_0,x_1), p=(p_0,p_1) \in  H, a \in \mathbb R^n.
    \end{equation*} 
    Let $\mathcal H$ satisfy Assumption \ref{coercivity} (e.g. recall Remark \ref{rem:H_attains_inf_bounded_set_H}). Let $l_0^2 \in C^{1,1}(\mathbb R^n)$ and assume that $l_0^2(a)-\nu |a|^2$ is convex for some $\nu>0$. Thus we can proceed in a similar way to Example \ref{exampleinfimumattained} to show that $Dl_0^2 \colon \mathbb R^n \to \mathbb R^n$ is invertible with Lipschitz derivative $(Dl_0^2)^{-1}\colon \mathbb R^n \to \mathbb R^n$. This means that the infimum in the Hamiltonian is attained at $a^*=(Dl_0^2)^{-1}(p_0)$ so that Assumption \ref{infimumattainedw} holds  and we can construct optimal feedback controls by means of Theorem \ref{th:optsynthw}.
 We emphasize that here only $D_{x_0}V$ is used to construct optimal feedback controls.
\begin{remark}
    We point out that here we get uniqueness of solutions of the closed loop equation while in \cite{deFeoSwiech} only existence of weak solutions was obtained using Girsanov's theorem.
\end{remark}

\appendix
\section{A Technical Result}
In this section we prove a general result regarding uniform continuity of the derivative of a function $V \colon [0,T]\times H \to \mathbb R$ on bounded sets.
\begin{lemma}\label{lemma:DV_locally_uniform_continuous} Let $V \colon [0,T]\times H \to \mathbb R$ be uniformly continuous on $[0,\tau]\times B_R$ for every $0<\tau<T$ and $R>0$. Moreover, let for every $0\leq t \leq T$, $V(t,\cdot)\in C^{1,\alpha}(H)$ for some $0<\alpha\leq 1$ and let there exist $C\geq 0$ such that $\|DV(t,x)\|_H\leq C, \|DV(t,x)-DV(t,x')\|_H\leq C\|x-x'\|^\alpha_H$ for all $0\leq t \leq T,x,x'\in H$. Then $DV$ is uniformly continuous on $[0,\tau]\times B_R$ for every $0<\tau<T$ and $R>0$.
\end{lemma}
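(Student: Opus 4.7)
The plan is to combine the uniform spatial continuity of $DV$ (already given by assumption) with a new estimate on the continuity of $DV$ in the time variable, obtained from the $C^{1,\alpha}$-in-$x$ regularity together with the joint uniform continuity of $V$ on bounded sets.

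The starting point is the quantitative Taylor-type inequality which follows from the uniform H\"older estimate on $DV(t,\cdot)$: integrating $DV$ along the segment from $x$ to $x+sh$, one obtains for every $t\in[0,T]$, $x\in H$, $s>0$, $h\in H$,
\begin{equation*}
\left| V(t,x+sh)-V(t,x)-s\langle DV(t,x),h\rangle_H\right|\leq \frac{C}{1+\alpha}\,s^{1+\alpha}\|h\|_H^{1+\alpha}.
\end{equation*}
Applied at $t$ and $t'$ and subtracted, for $\|h\|_H=1$ this gives
\begin{equation*}
\left|\langle DV(t,x)-DV(t',x),h\rangle_H\right|\leq \frac{|V(t,x+sh)-V(t',x+sh)|+|V(t,x)-V(t',x)|}{s}+\frac{2C}{1+\alpha}\,s^{\alpha}.
\end{equation*}

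Now I fix $0<\tau<T$, $R>0$, and denote by $\omega_{R,\tau}$ a modulus of continuity of $V$ on $[0,\tau]\times B_{R+1}$, which exists by hypothesis. For $x\in B_R$, $s\in(0,1]$ and $\|h\|_H=1$ we have $x+sh\in B_{R+1}$, so taking the supremum over such $h$ yields
\begin{equation*}
\|DV(t,x)-DV(t',x)\|_H\leq \frac{2\,\omega_{R,\tau}(|t-t'|)}{s}+\frac{2C}{1+\alpha}\,s^{\alpha}.
\end{equation*}
Choosing $s=\min\{1,\omega_{R,\tau}(|t-t'|)^{1/(1+\alpha)}\}$ (and shrinking $|t-t'|$ so that $\omega_{R,\tau}(|t-t'|)\leq 1$) balances the two terms and produces
\begin{equation*}
\|DV(t,x)-DV(t',x)\|_H\leq \widetilde{C}\,\omega_{R,\tau}(|t-t'|)^{\alpha/(1+\alpha)}=:\tilde\omega_{R,\tau}(|t-t'|),
\end{equation*}
which is a modulus of continuity of $DV(\cdot,x)$ uniformly in $x\in B_R$.

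To finish, I combine this with the spatial estimate via the triangle inequality:
\begin{equation*}
\|DV(t,x)-DV(t',x')\|_H\leq \|DV(t,x)-DV(t',x)\|_H+\|DV(t',x)-DV(t',x')\|_H\leq \tilde\omega_{R,\tau}(|t-t'|)+C\|x-x'\|_H^{\alpha},
\end{equation*}
which is the desired uniform continuity of $DV$ on $[0,\tau]\times B_R$. There is no real obstacle: the only subtle point is ensuring that the points $x+sh$ used in the finite-difference representation of $DV$ stay in a slightly enlarged ball on which $V$ is uniformly continuous, which is handled by restricting to $s\leq 1$ and working on $B_{R+1}$.
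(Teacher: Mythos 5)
Your proof is correct and rests on the same key mechanism as the paper's: a first-order Taylor expansion with H\"older remainder $O(s^{1+\alpha})$ taken in the direction of the gradient difference, with the displacement $s$ chosen of order $\omega_{R,\tau}(|t-t'|)^{1/(1+\alpha)}$ to balance that remainder against the modulus of continuity of $V$ itself --- the paper packages the identical computation as a proof by contradiction along sequences, whereas your direct version yields an explicit modulus $\widetilde{C}\,\omega_{R,\tau}(|t-t'|)^{\alpha/(1+\alpha)}+C\|x-x'\|_H^{\alpha}$. The only (trivial) loose end is the degenerate case $\omega_{R,\tau}(|t-t'|)=0$, where your choice of $s$ divides by zero; there one simply lets $s\to 0^{+}$ in the displayed bound to conclude $DV(t,x)=DV(t',x)$.
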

\begin{proof}
Fix $R>0$ and assume, for the sake of contradiction, that there exist $\varepsilon>0$, $(t_n,x_n)_{n\in{\mathbb{N}}} ,(s_n,y_n)_{n\in{\mathbb{N}}} \subset [0,\tau] \times B_R$ such that $|t_n-s_n| \to 0, \|x_n-y_n\|_H \to 0$ and $\|DV(t_n,x_n)- DV(s_n,y_n)\|_H \geq \varepsilon.$ We remark that in the following, the constants $C$ may depend on $R$. Define 
\begin{equation*}
    V_n(t,x):=V(t,x)-V(s_n,y_n)- \langle DV(s_n,y_n),x-y_n\rangle_H.
\end{equation*}
Note that $V_n$ is uniformly continuous on $[0,\tau] \times B_R$ with a modulus of continuity $\rho$ independent of $n$ and $V_n(t, \cdot) \in C^{1,\alpha}$ for every $t$ with Holder's continuity constant of $DV_n$ independent of $n$ and $t \leq T$. Defining $DV_n(t,x)=DV(t,x)-DV(s_n,y_n),$  we have
\begin{equation}\label{eq:propVn}
    V_n(s_n,y_n)=0, \quad DV_n(s_n,y_n)=0, \quad  \|DV_n(t_n,x_n)\|_H=\|DV(t_n,x_n)- DV(s_n,y_n)\|_H \geq \varepsilon.
\end{equation}
Hence, for every $y \in H$ such that
\begin{equation}\label{eq:norm_y-y_n}
    \| y-y_n\|_H \leq (\rho(|t_n-s_n|))^{\frac 1 {1+\alpha}}+2\|x_n-y_n \|_H,
\end{equation}
by the Mean Value Theorem (denoting $y_{\theta_n}:=\theta_n y + (1-\theta_n)y_n$ for some $\theta_n \in [0,1]$) we have
\begin{equation*}
\begin{split}
    |V_n(s_n,y)|&=|V_n(s_n,y)-V_n(s_n,y_n)|=| \langle DV_n(s_n,y_{\theta_n}),y-y_n \rangle_H| =| \langle DV_n(s_n,y_{\theta_n})- DV_n(s_n,y_n),y-y_n \rangle_H| \\
    &\leq C \|y-y_n\|_H^{1+\alpha} \leq C \left[\rho(|t_n-s_n|)+\|x_n-y_n \|_H^{1+\alpha} \right],
\end{split}
\end{equation*}
where we have also used the fact that $V_n(t, \cdot) \in C^{1,\alpha}$ for every $t \leq T$ with Lipschitz constant of $DV_n$ independent of $n$ and $t \leq T$. It follows 
\begin{equation}\label{eq_norm_v_n_t_n_y}
    |V_n(t_n,y)|\leq |V_n(s_n,y)| + \rho(|t_n-s_n|)\leq C \left[\rho(|t_n-s_n|)+\|x_n-y_n \|_H^{1+\alpha} \right].
\end{equation}
Defining $y \in H$ by 
\begin{equation}\label{eq:def_y_proof_unif_regularityDV}
y:=x_n+\frac{DV_n(t_n,x_n)}{\|DV_n(t_n,x_n) \|_H} \left [(\rho(|t_n-s_n|))^{\frac 1 {1+\alpha}}+\|x_n-y_n \|_H  \right ],
\end{equation}
we have \eqref{eq:norm_y-y_n}, so that  \eqref{eq_norm_v_n_t_n_y} holds. Hence, we have a first inequality for $V_n(t_n,y)$: 
\begin{equation}\label{inequalityVone}
C \left[\rho(|t_n-s_n|)+\|x_n-y_n \|_H^{1+\alpha} \right] \geq  V_n(t_n,y).
\end{equation}
Note also that $x_n$ satisfies \eqref{eq:norm_y-y_n}, so that \eqref{eq_norm_v_n_t_n_y} holds for $V_n(t_n,x_n)$. Then, since $V_n(t, \cdot) \in C^{1,\alpha}$ for every $t \leq T$ with Lipschitz constant of $DV_n$ independent of $n$ and $t \leq T$, we have
\begin{equation}\label{inequalityVtwo}
\begin{split}
    V_n(t_n,y) & \geq V_n(t_n,x_n) + \left \langle DV_n(t_n,x_n), y-x_n \right \rangle_H  - C \|y-x_n\|_H^{1+\alpha}\\
    & =V_n(t_n,x_n) +\|DV_n(t_n,x_n) \|_H  \left [ (\rho(|t_n-s_n|))^{\frac 1 {1+\alpha}}+\|x_n-y_n \|_H  \right ]- C \|y-y_n\|_H^{1+\alpha}\\
    & \geq V_n(s_n,y_n) -  C \left[\rho(|t_n-s_n|)+\|x_n-y_n \|_H^{1+\alpha} \right] + \varepsilon \left [ (\rho(|t_n-s_n|))^{\frac 1 {1+\alpha}}+2\|x_n-y_n \|_H  \right ]\\
    & = -  C \left[\rho(|t_n-s_n|)+\|x_n-y_n \|_H^{1+\alpha} \right] + \varepsilon \left [ (\rho(|t_n-s_n|))^{\frac 1 {1+\alpha}}+2\|x_n-y_n \|_H  \right ]
\end{split}
\end{equation}
where in the second line we have used \eqref{eq:def_y_proof_unif_regularityDV} and in the last two lines we have used  \eqref{eq:propVn} and \eqref{eq:norm_y-y_n}. Combining the two inequalities \eqref{inequalityVone} and \eqref{inequalityVtwo} for $V_n(t_n,y)$, we obtain
\begin{equation*}
    C \left[\rho(|t_n-s_n|)+\|x_n-y_n \|_H^{1+\alpha} \right] \geq \varepsilon \left [ (\rho(|t_n-s_n|))^{\frac 1 {1+\alpha}}+2\|x_n-y_n \|_H  \right ],
\end{equation*}
which is a contradiction for large $n$.
\end{proof}

\paragraph{\textbf{Acknowledgments:}} 
Filippo de Feo acknowledges support from DFG CRC/TRR 388 "Rough Analysis, Stochastic Dynamics and Related Fields", Project B05, by INdAM (Instituto Nazionale di Alta Matematica F. Severi) - GNAMPA (Gruppo Nazionale per l'Analisi Matematica, la Probabilità e le loro Applicazioni), and by the Italian Ministry of University and Research (MUR), in the framework of PRIN projects 2017FKHBA8 001 (The Time-Space Evolution of Economic Activities: Mathematical Models and Empirical Applications) and 20223PNJ8K (Impact of the Human Activities on the Environment and Economic Decision Making in a Heterogeneous Setting: Mathematical Models and Policy Implications). Lukas Wessels was partially supported by a fellowship of the German Academic Exchange Service (DAAD).

\end{document}